\newcommand{\cc}{\mathbb C}
\newcommand{\zz}{\mathbb Z}
\newcommand{\rr}{\mathbb R}
\newcommand{\A}{\mathbb A}
\newcommand{\la}{\langle}
\newcommand{\ra}{\rangle}
\newcommand{\lra}{\longrightarrow}
\newcommand{\hra}{\hookrightarrow}
\newcommand{\al}{\alpha}
\newcommand{\be}{\beta}
\newcommand{\ga}{\gamma}
\newcommand{\ep}{\epsilon}
\newcommand{\lam}{\lambda}
\newcommand{\Lam}{\Lambda}
\newcommand{\ka}{\kappa}
\DeclareMathOperator{\ad}{ad}
\DeclareMathOperator{\End}{End}
\DeclareMathOperator{\GL}{GL}
\DeclareMathOperator{\Hom}{Hom}
\DeclareMathOperator{\PGL}{PGL}
\DeclareMathOperator{\SO}{SO}
\DeclareMathOperator{\SL}{SL}
\DeclareMathOperator{\Sp}{Sp}
\DeclareMathOperator{\Lie}{Lie}
\DeclareMathOperator{\sspan}{span}
\DeclareMathOperator{\diag}{diag}
\DeclareMathOperator{\Spec}{Spec}
\DeclareMathOperator{\Stab}{Stab}
\DeclareMathOperator{\codim}{codim}
\DeclareMathOperator{\corank}{corank}
\DeclareMathOperator{\rk}{rank}
\DeclareMathOperator{\Ad}{Ad}
\newcommand{\fg}{\mathfrak g}
\newcommand{\fgl}{\mathfrak gl}
\newcommand{\ft}{\mathfrak t}
\newcommand{\fk}{\mathfrak k}
\newcommand{\fa}{\mathfrak a}
\newcommand{\fc}{\mathfrak c}
\newcommand{\fu}{\mathfrak u}
\newcommand{\fb}{\mathfrak b}
\newcommand{\fn}{\mathfrak n}
\newcommand{\fd}{\mathfrak d}
\newcommand{\fl}{\mathfrak l}
\newcommand{\fp}{\mathfrak p}
\newcommand{\fq}{\mathfrak q}
\newcommand{\fz}{\mathfrak z}
\newcommand{\fsl}{\mathfrak{sl}}
\newcommand{\fso}{\mathfrak{so}}
\newcommand{\calT}{\mathcal{T}}
\newcommand{\calo}{\mathcal{O}}
\newcommand{\caln}{\mathcal N}
\newcommand{\fgres}{\widetilde{\fg}_1^{res}}
\newcommand{\fgreg}{\widetilde{\fg}_1^{reg}}
\newcommand{\fgbul}{\widetilde{\fg}_1}
\newcommand{\Gm}{\mathbb{G}_m}
\newcommand{\iso}{\xrightarrow{\sim}}
\newcommand{\Fl}{\mathcal{F}l}
\def\Ddots{\mathinner{\mkern1mu\raise\p@
\vbox{\kern7\p@\hbox{.}}\mkern2mu
\raise4\p@\hbox{.}\mkern2mu\raise7\p@\hbox{.}\mkern1mu}}
\newtheorem{Thm}{Theorem}[section]
\newtheorem{Prop}[Thm]{Proposition}
\newtheorem{Lem}[Thm]{Lemma}
\newtheorem{Cor}[Thm]{Corollary}
\newtheorem{Conj}[Thm]{Conjecture}
\theoremstyle{definition}
\newtheorem{Def}[Thm]{Definition}
\theoremstyle{remark}
\newtheorem{Rem}[Thm]{Remark}
\newtheorem{Ex}[Thm]{Example}
\theoremstyle{definition}
\title[Grothendieck-Springer for symmetric spaces]{An analogue of the Grothendieck-Springer resolution for symmetric spaces}
\author{Spencer Leslie}
\date\today
\address{Department of Mathematics, Duke University, 120 Science Drive, Durham, NC, USA}
\email{lesliew@math.duke.edu}
\subjclass[2010]{Primary 20G05 ; Secondary 17B08, 32S45}
\keywords{Symmetric pair, regular stabilizers, resolution of singularities, Springer theory, relative trace formulae}
\begin{document}

\begin{abstract}
Motivated by questions in the study of relative trace formulae, we construct a generalization of Grothendieck’s simultaneous resolution over the regular locus of certain symmetric pairs. We use this space to prove a relative version of results of Donagi-Gaitsgory about the automorphism sheaf of regular stabilizers. We also obtain partial results toward applications
in Springer theory for symmetric spaces.
\end{abstract}

\maketitle


Let $G$ be a connected reductive group over an algebraically closed field $k$, and let $\fg$ denote its Lie algebra. We assume throughout that the characteristic of $k$ is zero or sufficiently large with respect to $G$. An important construction in the representation theory of $\fg$ is the \emph{simultaneous resolution of singularities} of Grothendieck $$\widetilde{\fg}=\{(X,B)\in \fg\times\Fl_G: X\in \Lie(B)\},$$ where $\Fl_G$ is the flag variety of Borel subgroups of $G$. This space plays a central role in Springer theory, where one needs both the property that it simultaneously resolves the singularities of the quotient map with respect to the adjoint action $\chi:\fg\to \fg//G$, and the existence of the Cartesian diagram
\begin{equation}\label{intro diagram}
\begin{tikzcd}
 \widetilde{\fg}^{reg}\ar[d,"{\pi}"]\ar[r,"\widetilde{\chi}"]&\ft\ar[d]\\
\fg^{reg}\ar[r,"\chi"]&\ft/W,
\end{tikzcd}
\end{equation}
where $W$ is the Weyl group acting on a Cartan subalgebra $\ft$, $\pi:\widetilde{\fg}\to \fg$ is the projection, and we have made use of the Chevalley isomorphism $\fg//G\cong \ft/W$. This diagram may be used to induce Springer's $W$-action on the cohomology of Springer fibers.

 The variety $\widetilde{\fg}$ also arises in the theory of $G$-Higgs bundles as studied by Donagi and Gaitsgory. In \cite{DonGaits}, the authors identify abstract Hitchin fibers as a gerbe over a certain abelian group scheme which acts on the Hitchin fibration. In their analysis, the restriction of the Grothendieck-Springer resolution to the regular locus of $\fg$ is used to compare the moduli space of \emph{regular centralizers} with the moduli space of \emph{regular orbits} of $\fg$. In his study of the Langlands-Shelstad fundamental lemma, Ng\^{o} \cite{Ngo06} utilized this connection in an important way. One of the goals of this present article is to establish an analogous statement in the case of a symmetric space (see Theorem \ref{Thm: local etale}).

More precisely, assume now that $G$ admits an involutive automorphism $\theta:G\to G$, and let $G_0$ be the fixed-point subgroup of $\theta$. The pair $(G,G_0)$ is called a symmetric pair. Passing to the Lie algebra $\fg=\Lie(G)$, the differential of $\theta$ (which we also denote by $\theta:\fg\to \fg$) produces the decomposition 
\[
\fg=\fg_0\oplus \fg_1,
\]
where $\fg_i$ is the $(-1)^i$-eigenspace of $\theta$. Then $G_0$ acts on the \emph{infinitesimal symmetric space} $\fg_1$ by restriction of the adjoint action. Studying the $G_0$-orbits on $\fg_1$ gives a natural generalization of the adjoint representation. In fact, the adjoint representation may be recovered by considering the involution of $\fg\oplus \fg$ given by swapping the two factors. 

\subsection{An analogue of the Grothendieck-Springer resolution} In this paper, we construct and study a generalization of Grothendieck's resolution for the quotient of $\fg_1$ by the action of $G_0$ over the regular locus of $\fg_1$ under the assumption that $\theta$ is \emph{quasi-split}. This is equivalent to the existence of a Borel subgroup $B\subset G$ such that $B\cap \theta(B)$ is a torus. In this setting, we define a sub-scheme $\widetilde{\fg}_1\subset \fg_1\times_\fg\widetilde{\fg}$ and, setting $\fgreg=\fgbul\times_{\fg_1}\fg_1^{reg}$, we prove that the induced map $\pi: \widetilde{\fg}^{reg}_1\to \fg_1^{reg}$ behaves like an analogue of Grothendieck's resolution:
\begin{Thm}\label{Thm: main theorem intro}
Let $(\fg,\fg_0)$ be a quasi-split symmetric pair with $\fg=\fg_0\oplus \fg_1$. There is a closed subscheme $\fgbul\subset \fg_1\times_\fg\widetilde{\fg}$ equipped with a proper surjective morphism $\pi:\widetilde{\fg}_1\to\fg_1$. We have a commutative diagram
\[
\begin{tikzcd}
 \widetilde{\fg}_1\ar[r, "\widetilde{\chi}_1"]\ar[d,"\pi"]&\fa\ar[d]\\
\fg_1\ar[r,"\chi_1"]&\fa/W_\fa, 
\end{tikzcd}
\]
 where $\fa$ is the universal Cartan of the symmetric pair, $\chi_1: \fg_1\to \fa/W_\fa$ is the categorical quotient map, and $\widetilde{\chi}_1$ is the restriction of $\widetilde{\chi}:\widetilde{\fg}\to \fg$ to $\fgbul$. Furthermore, the restriction 
 \[
 \widetilde{\chi}_1|_{\fgreg}:\fgreg:=\fgbul\times_{\fg_1}\fg_1^{reg}\lra \fa^{reg}
 \]is smooth, and the corresponding diagram is Cartesian.
\end{Thm}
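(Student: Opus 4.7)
The plan is to follow the classical Grothendieck--Springer blueprint, replacing Borel subgroups by the class of \emph{$\theta$-admissible} Borels---those $B \subset G$ for which $T := B \cap \theta(B)$ is a torus. Existence of such a Borel is precisely the quasi-split hypothesis. I would define $\widetilde{\fg}_1 \subset \fg_1 \times_\fg \widetilde{\fg}$ as the closure of the locus of pairs $(X, B)$ with $B$ admissible; this is a closed subscheme by construction. Properness of $\pi$ is inherited from $\widetilde{\fg} \to \fg$, which is proper since $\Fl_G$ is projective. For surjectivity, one shows every $X \in \fg_1$ lies in some admissible Borel: using a Jordan-type decomposition inside $\fg_1$, one reduces to the regular semisimple case, where the unique Cartan subspace containing $X$ sits in a $\theta$-stable maximal torus, and the quasi-split hypothesis supplies an admissible Borel containing that torus. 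Combined with properness, this gives surjectivity onto $\fg_1$.

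To construct $\widetilde{\chi}_1$ and verify that the square commutes, observe that when $B$ is admissible with $T = B \cap \theta(B)$, the torus decomposes as $T = T_0 \cdot A$ with $A$ the maximal $\theta$-split subtorus and $\fa = \Lie(A)$. The induced $\theta$-action on $\ft = \fb/[\fb,\fb]$ identifies $\fa$ with the $(-1)$-eigenspace. Any $X \in \fg_1 \cap \fb$ is $\theta$-anti-invariant, so its image in $\ft$ automatically lands in $\fa$. Thus $\widetilde{\chi}_1$ can be defined on the admissible locus as the composite $\widetilde{\fg}_1 \hookrightarrow \widetilde{\fg} \xrightarrow{\widetilde{\chi}} \ft \twoheadrightarrow \fa$, and it extends to the closure by separatedness of $\fa$. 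Commutativity of the square with $\chi_1: \fg_1 \to \fa/W_\fa$ then reduces to the Chevalley--Kostant--Rallis isomorphism $\fg_1 // G_0 \cong \fa / W_\fa$.

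For the regular locus, the heart of the matter is to describe the fiber of $\pi$ over $X \in \fg_1^{reg}$. Structure theory for quasi-split pairs should show that the set of admissible Borels containing a regular $X$ is naturally a torsor for the little Weyl group $W_\fa$, in the same way that classical Borels containing a regular element of $\fg$ form a $W$-torsor. This identifies $\pi^{-1}(X)$ with the $W_\fa$-fiber of $\fa^{reg} \to \fa^{reg}/W_\fa$ over $\chi_1(X)$, giving the Cartesian property. Smoothness of $\widetilde{\chi}_1|_{\fgreg}$ then follows because $\fa^{reg} \to \fa^{reg}/W_\fa$ is smooth (being a $W_\fa$-torsor on the free locus), and smoothness is preserved under base change.

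The principal obstacle is the fiber identification in the previous step: showing that the admissible Borels containing a regular $X$ form exactly a $W_\fa$-torsor---neither too many nor too few. The quasi-split hypothesis must be used essentially here, both to realize every element of $W_\fa$ via an admissible lift, and to rule out spurious admissible Borels degenerately containing $X$. I expect this to proceed via an explicit analysis of the $G_0$-orbit structure on the admissible flag variety; this is strictly more delicate than the classical $(G, T, W)$ argument and plausibly relies on an analogue of the Kostant--Rallis section adapted to the symmetric pair.
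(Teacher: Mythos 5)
Your overall plan---define $\fgbul$ using $\theta$-split Borels, use the induced involution on $\ft$ to extract $\fa$, and identify fibers of $\pi$ with fibers of $\fa\to\fa/W_\fa$---is close in spirit to the paper's construction. However, the central step fails as written.

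The gap is the fiber description over regular, non-semisimple elements. If $X=X_{ss}+X_{nil}\in\fg_1^{reg}$ with $X_{nil}\neq 0$, then \emph{no} $\theta$-split Borel contains $X$. For instance, if $X=e$ is a regular nilpotent of $\fg_1$, the unique $B$ with $e\in\Lie(B)$ is $\theta$-\emph{stable}, so $B\cap\theta(B)=B$ is not a torus. Your ``admissible locus'' is therefore empty over $e$, and the assertion that ``admissible Borels containing a regular $X$ form a $W_\fa$-torsor'' is simply false there. (The classical analogue is also mis-stated: the fiber of $\widetilde{\fg}^{reg}\to\fg^{reg}$ over $X$ is the $W$-\emph{orbit} of any $t$ with $\chi(X)=\bar t$, which is a torsor only when $t$ is regular in $\ft$.) You can recover these missing points by taking the closure, but then the Cartesian property and smoothness over the full regular locus must be re-derived, and that is precisely where the hard content of the theorem lives. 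The paper handles this by a Kostant--Weierstrass section $\ka:\fa/W_\fa\to\fg_1$, using it to prove that $\fg_1\times_{\fa/W_\fa}\fa$ is irreducible and to enumerate the components of $\fg_1\times_{\ft/W}\ft$ by $W/W_\fa$; $\fgbul$ is then the distinguished component, and Cartesianness over $\fg_1^{reg}$ is built into the definition. The correct explicit description (Proposition~\ref{Prop: regular characterization}) of $\fgreg$ is the condition $B(\theta)=Z_B(X_{ss})$ \emph{and} that this is a regular $\theta$-stable Borel of $Z_G(X_{ss})$---a condition that degenerates to ``$B$ is $\theta$-split'' only when $X$ is regular semisimple.

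A second, smaller gap: your identification of $\fa\subset\ft$ uses the torus $T=B\cap\theta(B)$ of one chosen $\theta$-split $B$, but for the construction to be canonical you must check that the resulting $(-1)$-eigenspace inside the \emph{universal} Cartan $\ft$ is independent of this choice, i.e., that the various involutions on $\ft$ induced by different $\theta$-split Borels agree under the canonical isomorphisms $\fb/[\fb,\fb]\cong\fb'/[\fb',\fb']$. This is exactly Proposition~\ref{Prop: canonical involution} in the paper and is not automatic.
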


See Section \ref{Section: relative Groth} for more details. The family of {quasi-split} symmetric pairs includes the ``diagonal'' symmetric space $(\fg_0\oplus\fg_0,\Delta\fg_0)$ as well as the stable (or split) involutions which feature in representation-theoretic approaches to arithmetic invariant theory (see \cite{ThorneVinberg}).  

\begin{Rem}

Our initial motivation for seeking such a result comes from considering the comparison of relative trace formulae. In many cases of interest (see \cite{LeslieFundamental}, for example), one needs to generalize results of Ng\^{o} on the Langlands-Shelstad fundamental lemma \cite{Ngo06} to the setting of symmetric spaces in order to stabilize these formulae. As noted above, the analogues of the results of Donagi and Gaitsgory we prove here will play a role for such generalizations.

Our reason for restricting to symmetric spaces is the current lack of a general theory of the fine structure of spherical varieties in positive characteristic. We nevertheless hope that our results here will aid in generalizations to spherical varieties. Our restriction to quasi-split involutions is also very natural from the perspective of harmonic analysis. For example, Prasad recently showed that generic representations over non-archimedean fields can be $G_0$-distinguished only for such involutions \cite{prasad2018generic}.
\end{Rem}
\begin{Rem}
 Despite the notation, $\fgbul$ is \emph{not} a simultaneous resolution of singularities of the categorical quotient $\fg_1\to \fg_1//G_0$. Even for the diagonal symmetric space, the space $\fgbul$ is not isomorphic to the Grothendieck-Springer resolution $\widetilde{\fg}_0$, though their pullbacks to the regular locus are obviously isomorphic. In Section \ref{Sec: Smoothness}, we identify a (Zariski-dense) interstitial space \[\fgreg\subset \fgres\subset \fgbul\] which is a family of resolutions of the singularities of $\fg_1\to \fg_1//G_0$. In particular, $\fgres$ is isomorphic to the Grothendieck-Springer resolution in the diagonal setting. We discuss this object in more detail toward the end of the introduction and in Section \ref{Sec: Smoothness}.
\end{Rem}

 The proof of Theorem \ref{Thm: main theorem intro} occupies Sections \ref{Section: relative Groth}, using several results from Sections \ref{Section: prelim} and \ref{Section: resolutions of nilp}. A key idea is to show (see Proposition \ref{Prop: canonical involution}) that the universal Cartan subspace $\ft$ of $\fg$ may be equipped with a canonical involution $\theta_{can}:\ft\to \ft$ associated to the symmetric pair. This allows us to identify the universal Cartan subspace $\fa$ of the symmetric pair $(\fg,\fg_0)$ as a distinguished subspace of $\ft$. The Grothendieck-Springer resolution is equipped with a smooth map $\widetilde{\chi}:\widetilde{\fg}\to \ft$, and we define
\begin{equation*}
\fgbul=\{(X,B)\in \fg_1\times \Fl_G:\widetilde{\chi}(X,B)\in \fa\},
\end{equation*}
and show that this space has all the desired properties. This relies on a classification of the irreducible components of the fiber product $\fg_1\times_{\ft/W}\ft$, which we characterize with the aid of a Kostant-Weierstrass section to the categorical quotient map $\chi_1:\fg_1\to \fg_1//G_0$ along with $G_0$-conjugacy results from \cite{Levy}. This section gives a distinguished component of the fiber product corresponding to $\fgbul.$

This component is intimately related to constructions of Knop in the context of spherical varieties \cite{knop1994asymptotic} in characteristic zero. We expound further on this relationship in Appendix \ref{Appendix: Knop}, showing how Knop's sections do not recover the Kostant-Weierstrass section in our setting. Our contribution is to show that Knop's construction works in positive characteristic for symmetric spaces, which is of interest to number theory. Additionally, our use of a Kostant-Weierstrass section allows for a much more explicit analysis of this space over the entire regular locus. Such an analysis is important for applications in number theory and symplectic geometry; we consider some applications below. 

For clarity, we state our explicit description of $\widetilde{\fg}^{reg}_1$ here:
\begin{equation}\label{eqn: proposal}
\widetilde{\fg}^{reg}_1:=\{(X,B)\in \fg^{reg}_1\times \mathcal{F}l_G : B(\theta) = Z_B(X_{ss}) \text{ is a regular $\theta$-stable Borel of }Z_G(X_{ss}) \}.
\end{equation}
To be more precise, we associate to an element $(X,B)\in \fg_1\times_{\fg}\widetilde{\fg}$ two subgroups of $B$. The first is the largest $\theta$-stable subgroup contained in $B$, given by $B(\theta)=B\cap \theta(B)$. For example, if $B$ is $\theta$-split, then $B(\theta)$ is a maximal torus. Second, if we denote by $X_{ss}$ the semi-simple part of $X$, then $X_{ss}\in \fg_1$ and the centralizer $Z_{B}(X_{ss})$ of $X_{ss}$ in $B$ is a Borel subgroup of the centralizer $Z_G(X_{ss})$. Finally, we define a $\theta$-stable Borel subgroup $B=\theta(B)$ to be \emph{regular} if its Lie algebra contains a regular nilpotent element $n\in \Lie(B)$ that lies in $\fg_1$. This notion arises naturally from studying the action $\theta$ induces on the Springer resolution of the nilpotent cone (see Section \ref{Section: resolutions of nilp}).

\subsection{Applications}
In proving Theorem \ref{Thm: main theorem intro}, we have two main applications in mind: the study of regular centralizers in $\fg_0$ for the action on $\fg_1$ (Section \ref{Section: regular stabilizers}) and potential applications to Springer theory for symmetric spaces (Section \ref{Sec: Smoothness}).

In Section \ref{Section: regular stabilizers}, we introduce the moduli space of regular stabilizers of the action of $G_0$ on $\fg_1$, denoted $\overline{G_0/N_0}$ where $N_0$ is the stabilizer in $G_0$ of a Cartan subspace $\fa$ of $\fg_1$. As the notation indicates, this space is a partial compactification of the space $G_0/N_0$ which parameterizes Cartan subspaces of $\fg_1$ \cite[Section 2]{Levy}. We show that this is naturally a smooth scheme. This space may be equipped with a natural $W_\fa$-cover $\overline{G_0/T_0}\to \overline{G_0/N_0}$, where $T_0$ is the centralizer of $\fa$ in $G_0$ and $W_\fa$ is the little Weyl group of the symmetric space. This cover is a partial compactification of the space $G_0/T_0$ of pairs $(\fa',\fb')$, with $\fa'\subset \fg_1$ a Cartan subspace of $\fg_1$ and $\fa'\subset \fb'$ where $\fb'$ is a $\theta$-split Borel subalgebra\footnote{Since we allow the characteristic to be positive, we work with the definition that a Borel subalgebra is simply the Lie algebra of a Borel subgroup.} of $\fg$ (Proposition \ref{Prop: split borels}). In Section \ref{Sec: regular stabilizers}, we prove that there is a Cartesian diagram
\[
\begin{tikzcd}
 \widetilde{\fg}_1^{reg}\ar[r]\ar[d]&\overline{G_0/T_0}\ar[d]\\
\fg_1^{reg}\ar[r]&\overline{G_0/N_0},
\end{tikzcd}
\]
and we show that the horizontal arrows in this diagram are smooth (see Proposition \ref{Prop: smoothness} and Theorem \ref{Thm: local etale}). A corollary of this is that the two $W_\fa$-covers  $\overline{G_0/T_0}\to \overline{G_0/N_0}$ and $\fa\to \fa/W_\fa$ are \'{e}tale-locally isomorphic in the strong sense that they become isomorphic after a smooth base change. This implies that one is \'{e}tale-locally a pull-back of the other and vice versa, whence the terminology. This is the analogue for quasi-split symmetric spaces of the results of \cite[Section 10]{DonGaits}. 

In Section \ref{Section: abelian groups}, we study the tautological sheaf of regular stabilizers
$$\mathcal{C}_0:=\{(g,\fc)\in G_0\times \overline{G_0/N_0}: g|_{\fc}=Id_\fc\}$$
 on $\overline{G_0/N_0}$. 
We prove that this group scheme is smooth and isomorphic to an abelian group scheme built out of the fixed point subgroup of the canonical involution on the universal Cartan $\theta_{can}:T\to T$. More precisely, let $T_0=T^{\theta_{can}}$ and consider the group scheme
\[
\calT_0(S):=\left(W_\fa\text{-equivariant morphisms }\widetilde{S}_0\to T_0\right)
\] 
for any $\overline{G_0/N_0}$-scheme $S$, where $\widetilde{S}_0=S\times_{\overline{G_0/N_0}}\overline{G_0/T_0}$. We show (see Theorem \ref{Thm: isomorphism}) that there is a canonical isomorphism $\mathcal{C}_0\iso \mathcal{T}_0$. Such a model for the sheaf of regular stabilizers is crucial for generalizing the approach of Ng\^{o} to studying fundamental lemmas in the context of relative trace formulae. 
\begin{Rem}
 While we assume for simplicity that $G_{der}$ is simply connected for much of the article, we address the necessary changes to obtain an isomorphism $\mathcal{C}_0\iso \calT_0$ in the general case in Section \ref{Section: not sc}.
\end{Rem}
\begin{Rem}
 This group scheme is intimately related to the automorphism group schemes used by Knop \cite{KnopAutomorphisms} in his analysis of collective invariant motion of a $G$-variety $X$ in characteristic zero. Recently, Sakellaridis \cite{Sakrank1} utilized Knop's group scheme in a crucial manner to prove a ``beyond endoscopic'' transfer statement for rank one spherical varieties. Interestingly, it is the ``complimentary subgroup'' $\mathcal{C}_0$ that is central to endoscopic phenomena in the symmetric case. 
\end{Rem}

Aside from motivations arising from relative trace formulae, we expect $\fgbul$ to have other applications in the representation theory of symmetric pairs. For example, Chen, Grinberg, Vilonen, and Xue (see \cite{chen2015springer,grinbergvilonenxue,vilonen2018character}) have recently studied analogues of Springer theory for symmetric pairs. While their initial work sought to generalize an approach of Lusztig which relies on $\widetilde{\fg}$,  their most general results rely on a near-by cycles construction in \cite{grinbergvilonenxue}. As noted above, the variety $\fgbul$ does not give a simultaneous resolution of singularities for the quotient $\fg_1\to \fg_1//G_0$, so it is natural to ask if there is an interstitial space 
\[
\fgreg\subset \fgres\subset \fgbul
\]
which generalizes the Grothendieck-Springer resolution in this sense. Toward this question, we consider in Section \ref{Sec: Smoothness} such a subspace $\fgres\subset \fgbul$, which recovers the classical Grothendieck-Springer resolution in the case of the case of the diagonal symmetric space $(\fg_0\oplus \fg_0,\Delta \fg_0)$. Our proposal for $\fgres$ is quite natural: we simply extend the construction of $\fgreg$ from (\ref{eqn: proposal}) to all of $\fg_1$. 
 \begin{Thm}Consider the subspace of $\widetilde{\fg}_1$ defined by
 \begin{equation*}
\widetilde{\fg}^{res}_1:=\{(X,B)\in \fg_1\times \mathcal{F}l_G : B(\theta) = Z_B(X_{ss}) \text{ is a regular $\theta$-stable Borel of }Z_G(X_{ss}) \},
\end{equation*}
and consider the map $\widetilde{\chi}_1: \widetilde{\fg}_1^{res}\lra \fa$. For each $a\in \fa$, there is a decomposition into connected components
\[
\widetilde{\chi}_1^{-1}(a)_{red}=\bigsqcup_{i\in \pi_0(\caln(a)_1)}\widetilde{\chi}_1^{-1}(a)_{w(i)}
\]
such that each component is smooth and the map $\widetilde{\chi}_1^{-1}(a)_{w(i)}\to {\chi}_1^{-1}(\overline{a})_{i}$ is a resolution of singularities. Here, $\widetilde{\chi}_1^{-1}(a)_{red}$ denotes the induced reduced scheme structure. 
 \end{Thm}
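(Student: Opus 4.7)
My plan is to reduce the statement to the nilpotent case inside the centralizer $Z_G(a)$ and then invoke the $\theta$-equivariant Springer-type results of Section \ref{Section: resolutions of nilp}. First, I would fix $a\in\fa$ and analyze a point $(X,B)\in\widetilde{\chi}_1^{-1}(a)$ via its Jordan decomposition $X=X_{ss}+X_{nil}$. The defining condition of $\widetilde{\fg}_1^{res}$ forces $B(\theta)=Z_B(X_{ss})$ to be a regular $\theta$-stable Borel subgroup of $Z_G(X_{ss})$; combined with $\widetilde{\chi}_1(X,B)=a$, this pins down $X_{ss}$ up to $Z_{G_0}$-conjugacy and forces $X_{nil}\in\Lie(B(\theta))\cap\fg_1$. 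Thus the data of $(X,B)\in\widetilde{\chi}_1^{-1}(a)$ is equivalent (after choosing a representative $X_{ss}$) to a pair $(n,B_a)$ where $B_a$ is a regular $\theta$-stable Borel subgroup of $Z_G(a)$ and $n\in \Lie(B_a)\cap \fg_1$ is nilpotent. Here the key input is that, thanks to the regularity hypothesis on $B(\theta)$, the full Borel $B\subset G$ can be recovered from $B(\theta)$ by taking the Borel of $G$ containing $B(\theta)$ that is compatible with the given flag data on $Z_G(a)$; this uses the Grothendieck–Springer resolution of $Z_G(a)$ applied at the regular element $a$, so that $B$ is uniquely determined by its restriction to the centralizer.

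Next, having made this identification, the fiber $\widetilde{\chi}_1^{-1}(a)_{red}$ is identified with the analogue, for the symmetric pair $(Z_G(a),Z_{G_0}(a))$, of the $\theta$-stable Springer resolution of the nilpotent cone $\caln(a)_1\subset Z_{\fg_1}(a)$ constructed in Section \ref{Section: resolutions of nilp}. The decomposition
\[
\widetilde{\chi}_1^{-1}(a)_{red}=\bigsqcup_{i\in \pi_0(\caln(a)_1)}\widetilde{\chi}_1^{-1}(a)_{w(i)}
\]
then follows from the classification, given in that section, of regular $\theta$-stable Borel subgroups of a reductive group up to conjugacy: these conjugacy classes are in bijection with the components of the nilpotent cone in the $(-1)$-eigenspace (hence with $\pi_0(\caln(a)_1)$), and fixing one such conjugacy class cuts out a single connected component of the fiber.

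On each component, smoothness and the resolution property follow by transporting the standard Springer argument to the symmetric setting. Concretely, writing $\caln(a)_1 = \bigsqcup_i \caln(a)_{1,i}$ for the component decomposition, each $\widetilde{\chi}_1^{-1}(a)_{w(i)}$ is an $Z_{G_0}(a)$-equivariant bundle over the partial flag variety of $\theta$-stable Borels of the given conjugacy class $w(i)$, with fiber the nilradical intersected with $\fg_1$. This is smooth, projective over $\caln(a)_{1,i}$, and birational onto its image since a regular nilpotent element of $\caln(a)_{1,i}$ lies in a unique such $B_a$; the resolution statement follows.

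The principal obstacle is the clean identification in the first step: one has to verify that $B$ is genuinely recovered from $X$ and $B(\theta)$ via the regularity of $B(\theta)$. The argument uses that, after passing to $Z_G(X_{ss})$, the ambient Borel $B$ restricts to a Borel of $Z_G(X_{ss})$ and that the Grothendieck–Springer structure on $Z_G(X_{ss})$ (applied via Theorem \ref{Thm: main theorem intro} to its symmetric pair) is rigid enough to reconstruct $B$. Once this identification holds fiberwise, the global decomposition and smoothness follow formally from Section \ref{Section: resolutions of nilp}.
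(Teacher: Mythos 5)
Your overall strategy — reduce to the nilpotent cone of $Z_G(a)$, decompose by $\pi_0(\caln(a)_1)$, and invoke the Sekiguchi--Reeder resolution from Section \ref{Section: resolutions of nilp} — matches the paper's approach. However, there are two genuine gaps, one of which you flag yourself as "the principal obstacle" but then paper over.

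The claim that $B$ is uniquely recovered from $X$ (or $X_{ss}$) and $B(\theta)$ is false. Take $X$ regular semisimple with Cartan subspace $\fa\ni X$ and maximal $\theta$-split torus $A$; by quasi-splitness $Z_G(X_{ss})=T:=Z_G(A)$, so $B(\theta)=T$. But by Proposition \ref{Prop: split borels} there is an entire $W_\fa$-torsor of $\theta$-split Borels $B\supset A$, each with $(X,B)\in\fgres$ and the \emph{same} $B(\theta)=T$. The "rigidity via the Grothendieck--Springer resolution of $Z_G(a)$" you invoke cannot save this: $a$ is central in $Z_G(a)$, so the relevant Grothendieck--Springer fiber is the full flag variety and forces no uniqueness. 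What the paper actually uses is the strictly weaker, and non-trivial, statement of Corollary \ref{Cor:regular Borels orbit}: all such $B$ lie in a single $G_0$-orbit. This is the lemma inside the paper's proof that shows $\pi_2\bigl(\widetilde{\chi}_1^{-1}(a)_{w(i)}\bigr)\subset\Fl_G$ is one $G_0$-orbit, and without it the first step of your reduction does not go through.

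The second gap is a dimensional mismatch. You identify the component $\widetilde{\chi}_1^{-1}(a)_{w(i)}$ with the Sekiguchi--Reeder resolution $E_{w(i)}$ of $\caln(a)_1^i$ for the pair $\bigl(Z_G(a),Z_{G_0}(a)\bigr)$, viewed as a $Z_{G_0}(a)$-equivariant bundle over a flag variety of $Z_{G_0}(a)$. But the target ${\chi}_1^{-1}(\overline{a})_i\cong G_0\times^{Z_G(a)^\theta}\caln(a)_1^i$ already carries a $G_0$-induction, so the fiber must be the full $G_0$-induction $G_0\times^{Z_G(a)^\theta}E_{w(i)}\cong G_0\times^{B(\theta)_0}\bigl(X(a)+\fn(\theta)_1\bigr)$, not $E_{w(i)}$ itself (their dimensions differ by $\dim G_0-\dim Z_{G_0}(a)$). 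Establishing this is exactly Lemma \ref{Lem: fiber iso} in the paper, and it again rests on the $G_0$-orbit lemma you omit. So while the architecture of your argument is right, the two identifications you assert without proof are the real content of the theorem.
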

This is Theorem \ref{Thm: resolution fibers}, and give a sufficient criterion  in Lemma \ref{Lem: technical smooth} for this space to be smooth. Additionally, Proposition \ref{Prop: recovers groth} shows that this recovers the Grothendieck-Springer resolution as a special case. Thus, there is a precise way in which one may systematically delete $G_0$-orbits from $\fgbul$ to obtain a family of resolutions. As we note below, this family can fail to be smooth, or even irreducible, in general.

 Our argument is similar to the analysis of $\widetilde{\fg}$ in \cite[Chapter 3]{Slodowy}.
 In particular, we need a good understanding of the resolution of singularities of irreducible components of nilpotent cones of symmetric spaces. We review the construction and relevant properties of the resolution given by Sekiguchi and Reeder \cite{Sekiguchi, reeder1995} in Section \ref{Section: resolutions of nilp}, where we introduce the notion of a regular $\theta$-stable Borel subgroup and identify the subset of the fixed-point locus of the Springer resolution which arises in $\widetilde{\chi}^{-1}(0)$. 

However, there are very basic cases when the morphism $\chi_1:\fg_1\to \fg_1//G_0$ does not admit a simultaneous resolution. In such cases, our space $\fgres$ cannot be smooth and may not even give rise to an irreducible scheme. We describe a family of such examples using a monodromy argument in Section \ref{Sec: Smoothness}, but for a simple example, consider the case of a quasi-split symmetric pair $(\fsl(2),\fso(2))$. Then $\fg_1\cong \A_k^2$, $\fg_1//G_0\cong \A_k^1$, and these isomorphisms may be chosen so that $\chi_1$ corresponds to the map
\begin{align*}
\A_k^2&\lra\A_k^1\\ (x,y)&\longmapsto xy.
\end{align*}
In this case, only the fiber over $0\in \A^1$ is singular, given by two affine lines meeting transversely at one point. However, $\fg_1\times_{\fa/W_\fa}\fa$ is a cone, so that there is no way to resolve the singularity of $\fg_1\to \fg_1//G_0$ at $0$ while remaining birational to $\fg_1\times_{\fa/W_\fa}\fa$.  In this case, $\fgbul$ is the blow-up at the cone point and $\fgres=\fgbul\setminus{\Gm}$ where $\Gm=\SO(2)$ denotes the open $\SO(2)$-orbit of the exceptional fiber. The two remaining points of the exceptional fiber parameterize the two regular $\theta$-stable Borel subgroups of $\SL(2)$, or equivalently the two components of the nilpotent cone of $\fg_1$. 

This example illustrates both that $\fgbul$ is as close to the Grothendieck-Springer resolution for symmetric spaces as is possible in general and how one may obtain the resolution of singularities of fibers of $\fg_1\to \fg_1//G_0$ by systematically deleting $G_0$-orbits. In this sense, $\fgbul$ is the appropriate object to study in the case of symmetric spaces and we expect it to have applications to representation theory of the symmetric pair $(\fg,\fg_0)$ beyond those studied in the present article. We hope to study the connections between these spaces with the Springer theory developed in \cite{chen2015springer,grinbergvilonenxue,vilonen2018character} in future work.



Let us now summarize the paper. We review notation and certain basic properties of symmetric pairs in Section \ref{Section: prelim}. We then focus on quasi-split involutions, culminating in Proposition \ref{Prop: canonical involution}. In Section \ref{Section: resolutions of nilp}, we review the theory of the nilpotent cone $\caln_1\subset\fg_1$, studying the resolutions of the components of $\caln_1$. This will be used in the proof of Theorem \ref{Thm: resolution fibers}. We also introduce the notion of a regular $\theta$-stable Borel subgroup in this section. Section \ref{Section: relative Groth} introduces $\fgbul$, and proves Theorem \ref{Thm: main theorem intro}. In Section \ref{Section: regular stabilizers}, we turn to the primary application of studying the space of regular stabilizers $\overline{G_0/N_0}$ and the sheaf of regular stabilizers on this space. Finally, with an eye toward applications in Springer theory, we end by introducing the space $\fgres\subset \fgbul$ which is a (potentially non-smooth) family of resolutions of singularities of the quotient map. We give a criterion for when this space is smooth.

\subsection{Notation}

Algebraic groups will be denoted in Roman font, while Lie algebras will be in fraktur font. 

For any $G$-variety $V$ on which an endomorphism $\theta$ acts, we denote by $V^\theta$ the fixed point subvariety of $V$. For any subspace $U\subset \fg$, we denote its centralizer in a subgroup $H\subset G$ by $Z_H(U)$. In particular, for $X\in \fg_1$ we have
\[
Z_{G_0}(X) = Z_G(X)^\theta.
\]
We set $Z(G)$ to be the center of $G$. Similarly, we denote the centralizer of $U$ in the Lie algebra $\mathfrak{h}=\Lie(H)$ by $\fz_{\mathfrak{h}}(U)$. For any group $H$ on which $\theta$ acts, we denote $\tau(g)=g^{-1}\theta(g)$. 

For any group $H$, we use $H^\circ$ to denote the connected component of the identity. 


\subsection{Acknowledgements}
I want to thank Jayce Getz for introducing me to questions which led directly to this project, as well as for many helpful conversations. I also thank Ng\^{o} Bao Chau, Aaron Pollack, and David Treumann for helpful discussions. We thank the anonymous referee for several helpful suggestions. Finally, I want to thank Jack Thorne for comments that led to the discovery of an error in an earlier version of this article.

%
\setcounter{tocdepth}{1}%
\tableofcontents

\section{Preliminaries}\label{Section: prelim}

Let $k$, $G$, $\fg$, and $\theta$ be as above. We assume that $\mathrm{char}(k)\neq2$ is either $0$ or greater than $2\kappa$, where $\kappa$ is the supremum of the Coxeter numbers of the simple components of $G$. 
\begin{Rem}
Much of this article works for $\mathrm{char}(k)\neq2$ very good for $G$, which is a much weaker assumption. The only aspect relying on the restriction to $\mathrm{char}(k)>2\kappa$ is the theory of the resolutions of singularities of the nilpotent cone from \cite{reeder1995}. We expect that appropriate application of the techniques used in \cite{Levy} should allow for Reeder's results to be extended to good characteristic.
\end{Rem}
For simplicity, we assume that the derived subgroup $G^{(1)}$ of $G$ is simply connected, except in Section \ref{Section: not sc}. This is not a serious restriction since for any isogenous group $G'$ with involution $\theta'$ there exists a unique involution $\theta_{sc}$ of $G$ such that, if $p:G\to G'$ is the surjective isogeny, the diagram
\[
\begin{tikzcd}
G\ar[r,"\theta_{sc}"]\ar[d,"p"]&G\ar[d,"p"] \\
G'\ar[r,"\theta'"]&G'
\end{tikzcd}
\]
commutes; see \cite[9.16]{Steinberg} and \cite[Lemma 1.3]{Levy}. In particular, $\theta'$ and $\theta_{sc}$ induce the same involution on $\fg$. We abuse notation and also denote by $\theta: \fg\to \fg$ the associated linear involution of $\fg$. 

There is a direct-sum decomposition $\fg=\fg_0\oplus \fg_1$, where $\fg_i$ is the $(-1)^i$-eigenspace of $\theta$ in $\fg$. Let $G_0=\{g\in G: \theta(g)=g\}$ be the fixed point subgroup of $\theta$ in $G$. The assumption that $G_{der}$ is simply connected means that the connected components of $G_0$ is controlled by its image in the abelianization map $\nu:G/G_{der}\to G^{ab}\cong \Gm^k$. The restriction of the adjoint action to $G_0$ normalizes $\fg_1$, and $\fg_0=\Lie(G_0)$. We will often use $i\in\{0,1\}$ as a subscript to indicate objects associated to the corresponding $(-1)^i$-eigenspace; for example, we denote by $\caln_{1}$ the cone of nilpotent elements in $\fg_{1}$ (see Section \ref{Section: resolutions of nilp}). 

\subsection{Basics of symmetric pairs}
Let $(\fg,\fg_0)$ be a symmetric pair with associated involution $\theta$. We record here some structural facts about $(\fg,\fg_0)$ and point the reader to \cite{Levy} for more detail. We begin by noting that the Jordan decomposition behaves well with respected to the decomposition of $\fg=\fg_0\oplus\fg_1$.

\begin{Lem} For $X\in \fg$ and for $i=0,1$, $X\in \fg_{i}$ if and only if $X_{ss},X_{nil}\in \fg_{i}$ where $X=X_{ss}+X_{nil}$ is the Jordan decomposition of $X\in \fg$.
\end{Lem}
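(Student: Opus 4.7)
The plan is to use uniqueness of the Jordan decomposition together with the fact that $\theta$ is a Lie algebra automorphism. The ``if'' direction is trivial: $\fg_i$ is a linear subspace, so $X_{ss},X_{nil}\in\fg_i$ implies $X=X_{ss}+X_{nil}\in\fg_i$.

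For the ``only if'' direction, I would first observe that because $\theta\colon\fg\to\fg$ arises as the differential of a group automorphism, it is an automorphism of $\fg$ as a (restricted, under our characteristic hypothesis) Lie algebra. Such an automorphism carries semisimple elements to semisimple elements, nilpotent elements to nilpotent elements, and preserves the bracket; consequently, if $X=X_{ss}+X_{nil}$ is the Jordan decomposition, then
\[
\theta(X)=\theta(X_{ss})+\theta(X_{nil})
\]
is a decomposition of $\theta(X)$ into a commuting sum of a semisimple and a nilpotent element. By uniqueness of Jordan decomposition (available in $\fg$ under the standing hypothesis on $\mathrm{char}(k)$ and the reductivity of $G$), we conclude
\[
(\theta X)_{ss}=\theta(X_{ss}),\qquad (\theta X)_{nil}=\theta(X_{nil}).
\]

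Now if $X\in\fg_i$, then $\theta(X)=(-1)^iX$, and since scalar multiplication clearly commutes with Jordan decomposition, we also have $(\theta X)_{ss}=(-1)^iX_{ss}$ and $(\theta X)_{nil}=(-1)^iX_{nil}$. Comparing with the identities above yields $\theta(X_{ss})=(-1)^iX_{ss}$ and $\theta(X_{nil})=(-1)^iX_{nil}$, i.e.\ $X_{ss},X_{nil}\in\fg_i$, as desired.

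The only subtle point is the assertion that $\theta$ preserves the Jordan decomposition in positive characteristic; the main obstacle, such as it is, is to invoke the correct framework (the restricted Lie algebra structure of $\fg$, which is respected by any group-theoretic automorphism) so that ``semisimple'' and ``nilpotent'' are intrinsic notions compatible with $\theta$. Under the hypothesis $\mathrm{char}(k)=0$ or $\mathrm{char}(k)>2\kappa$, this is standard, so the argument above goes through verbatim.
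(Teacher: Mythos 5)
The paper states this lemma without proof, treating it as a standard fact; your argument supplies exactly the standard one and is correct. The key points — that the differential of a group automorphism preserves semisimplicity, nilpotency, and the bracket (hence commutativity of the parts), and that one then invokes uniqueness of the Jordan decomposition applied to $\theta(X)=(-1)^iX$ — are all in order, and your remark about the restricted/intrinsic nature of the Jordan decomposition in the stated range of characteristics correctly flags the one place where care is needed.
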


In particular, there is a well-defined notion of the semi-simple locus $\fg_1^{ss}$ of $\fg_1$, namely $\fg_1\cap\fg^{ss}$. 
 A toral subalgebra $\fa\subset \fg_1$ is a \emph{Cartan subspace} of $\fg_1$ if it is maximal in the collection of toral subalgebras of $\fg_1$. Such a subalgebra lies in the semi-simple locus of $\fg_1$. Define the rank of the symmetric space $r_1=\rk(\fg_1)$ to be $\dim(\fa)$ for a Cartan subspace $\fa$ (see \cite[Theorem 2.11]{Levy}).
A torus $A$ in $G$ is $\theta$-split if $\theta(a)=a^{-1}$ for all $a\in A$. A maximal such torus is called a maximal $\theta$-split torus. Any two maximal $\theta$-split tori of $G$ are conjugate by an element of $G_0$ \cite[Section 2]{Levy}.

We say an element $X\in \fg_1$ is regular if its centralizer $Z_{G_0}(X)\subset G_0$ has the smallest possible dimension, and denote $\fg_1^{reg}$ as the set of regular elements. We refer to \cite{KR71} for properties of regular elements. An element is regular semi-simple if it is both regular and semi-simple, and set $\fg^{rss}_1=\fg_1^{reg}\cap\fg_1^{ss}$ to be the regular semi-simple locus.

\subsection{Quasi-split symmetric pairs}
 Define a parabolic subgroup $P\subset G$ to be $\theta$-split if $P\cap \theta(P)$ is a Levi subgroup of $P$. Fix a maximal $\theta$-split torus $A$.

\begin{Prop}\cite[Section 1]{vust}\label{Prop: Vust split parabolics}
Let $P\supset A$ be a $\theta$-split parabolic subgroup. Then $P$ is minimal among $\theta$-split parabolic subgroup if and only if $P\cap \theta(P)=Z_G(A)$. Any two minimal $\theta$-split parabolic subgroups of $G$ are conjugate by an element of $G_0$.
\end{Prop}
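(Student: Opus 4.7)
The plan is to analyze $\theta$-split parabolics via the restricted root system of $A$ on $\fg$. Since $A$ is $\theta$-split, $\theta$ acts as $-1$ on $X^*(A)$, so the decomposition of $\fg$ under $\Ad(A)$ produces a restricted root system $\Phi_A$ on which $\theta$ acts as $-1$. Parabolic subgroups of $G$ containing $Z_G(A)$ are in bijection with parabolic subsets $\Psi \subseteq \Phi_A$ via $\Psi \mapsto P_\Psi$, the parabolic with Lie algebra $\fz_\fg(A) \oplus \bigoplus_{\bar\alpha \in \Psi}\fg_{\bar\alpha}$.

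For such $P_\Psi$, the identity $\theta(\fg_{\bar\alpha}) = \fg_{-\bar\alpha}$ gives $\theta(P_\Psi) = P_{-\Psi}$, so $P_\Psi \cap \theta(P_\Psi) = P_\Psi \cap P_{-\Psi}$ is exactly the standard Levi $L_\Psi$ of $P_\Psi$ corresponding to the symmetric part $\Psi \cap (-\Psi)$. Thus every parabolic containing $Z_G(A)$ is automatically $\theta$-split, and $L_\Psi = Z_G(A)$ precisely when $\Psi \cap (-\Psi) = \emptyset$, i.e.\ when $\Psi$ is a system of positive roots in $\Phi_A$, i.e.\ when $P_\Psi$ is minimal among parabolics containing $Z_G(A)$. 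This handles both directions of the first equivalence for any $\theta$-split parabolic that already contains $Z_G(A)$.

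The remaining reduction is to show that an arbitrary minimal $\theta$-split parabolic $P \supseteq A$ actually contains $Z_G(A)$. Writing $L = P \cap \theta(P)$ for the Levi and $A' \subseteq Z(L)^\circ$ for its maximal $\theta$-split subtorus, I would argue by minimality that $A'$ is itself a maximal $\theta$-split torus of $G$: any strict enlargement $A'' \supsetneq A'$ to a $\theta$-split torus of $G$ would produce a proper $\theta$-split parabolic strictly inside $P$ by passing to the centralizer of $A''$ and enlarging the nilradical accordingly, contradicting minimality. Invoking $G_0$-conjugacy of maximal $\theta$-split tori, we may then conjugate $P$ by an element of $G_0$ so that $A' = A$, which forces $Z_G(A) \subseteq L \subseteq P$ and reduces to the previous paragraph.

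The conjugacy of minimal $\theta$-split parabolics then assembles from two standard ingredients: first, using $G_0$-conjugacy of maximal $\theta$-split tori, we may assume both parabolics contain $Z_G(A)$, in which case they correspond to two systems of positive restricted roots in $\Phi_A$; second, any two such positive systems are related by the little Weyl group $W_A$, which is realized by $N_{G_0}(A)/Z_{G_0}(A)$, giving the desired $G_0$-conjugating element. The main technical point is the minimality argument producing a maximal $\theta$-split torus from the Levi's center, since it requires carefully engineering a smaller $\theta$-split parabolic inside $P$ from a hypothetical larger $\theta$-split torus; the remainder of the proof rests on root-system combinatorics and the realization of $W_A$ inside $G_0$.
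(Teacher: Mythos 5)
The paper does not prove this statement; it cites it directly from Vust, so there is no in-paper argument to compare against. Your strategy of classifying $\theta$-split parabolics via the restricted root system $\Phi(G,A)$ is the standard route, and your first paragraph (treating parabolics that already contain $Z_G(A)$) is correct.

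The reduction of an arbitrary minimal $\theta$-split $P\supset A$ to one containing $Z_G(A)$ is where the argument has two real gaps. First, ``passing to the centralizer of $A''$ and enlarging the nilradical'' does not obviously produce a parabolic inside $P$: the group $Z_G(A'')$ need not be contained in $P$, so this maneuver by itself does not contradict minimality. Second, the sentence ``$A'=A$, which forces $Z_G(A)\subset L\subset P$'' has the inclusion reversed: $A\subset Z(L)$ gives only $L\subset Z_G(A)$, and the reverse containment is exactly what still has to be proved. A repair that stays in your framework: since $A$ is $\theta$-stable and $A\subset P$, we get $A\subset P\cap\theta(P)=L$; hence $P\cap Z_G(A)$ is a parabolic subgroup of $Z_G(A)$ with Levi $Z_L(A)$ and, because $\theta(P)=P^{-}$, it is $\theta$-split for $\theta|_{Z_G(A)}$. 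But $Z_G(A)$ admits no proper $\theta$-split parabolic $Q$: any such $Q$ is determined by a cocharacter $\lambda$ central in its Levi $Q\cap\theta(Q)$, and $\theta(Q)=Q^{-}$ lets one choose $\lambda$ with $\theta\circ\lambda=\lambda^{-1}$, so $\lambda$ generates a $\theta$-split torus commuting with $A$ yet not contained in $A$ (else $Q=Z_G(A)$), contradicting maximality of $A$. Therefore $Z_G(A)\subset P$, and since $Z_G(A)$ is $\theta$-stable also $Z_G(A)\subset\theta(P)$, so $Z_G(A)\subset L$. With these two repairs your argument is complete; the conjugacy statement is fine as written.
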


\begin{Def}
A symmetric pair $(\fg,\fg_0)$ with associated involution $\theta$ is called \emph{quasi-split} if there exists a Borel subgroup $B$ that is $\theta$-split. This is equivalent to $B\cap \theta(B)$ being a torus. The pair $(\fg,\fg_0)$ (resp., $\theta$) is \emph{split} if it is quasi-split and the torus $B\cap\theta(B)$ is $\theta$-split.
\end{Def} 
We will be exclusively interested in quasi-split symmetric pairs in the sequel. The following characterizations are well known.

\begin{Prop}\label{Prop: quasisplit characterization}
A symmetric pair $(\fg,\fg_0)$ is quasi-split if and only if the following equivalent statements hold:
\begin{enumerate}
\item\label{item1} There exists a $\theta$-split Borel subgroup of $G$.
\item\label{item2} The centralizer of a maximal $\theta$-split torus is abelian.
\item\label{item3} There exists a regular element of $\fg$ contained in $\fg_1$; that is, $\fg_1\cap\fg^{reg}\neq\emptyset.$
\end{enumerate}
\end{Prop}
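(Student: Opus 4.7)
The plan is to establish the equivalences via the cycle $(1)\Leftrightarrow (2)$ and $(1)\Rightarrow (3)\Rightarrow (2)$. Since a $\theta$-split Borel is by definition a Borel $B$ with $B\cap \theta(B)$ a torus, statement (1) is tautologous with the given definition of quasi-split, so the content is the equivalences among (1), (2), and (3).

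The equivalence $(1)\Leftrightarrow (2)$ rests on the proposition of Vust recorded above. Fix a maximal $\theta$-split torus $A$. For $(1)\Rightarrow (2)$, a $\theta$-split Borel $B$ has $T:=B\cap \theta(B)$ a $\theta$-stable maximal torus, and after a $G_0$-conjugation we may arrange $A\subset T\subset B$; since every Borel is a minimal parabolic, Vust's proposition gives $Z_G(A)=B\cap \theta(B)=T$, which is abelian. For $(2)\Rightarrow (1)$, if $Z_G(A)$ is abelian, hence a torus, pick a minimal $\theta$-split parabolic $P\supset A$; Vust yields $P\cap \theta(P)=Z_G(A)$, a torus, but a parabolic whose Levi is a torus is itself a Borel, so $P$ is a $\theta$-split Borel. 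For $(1)\Rightarrow (3)$, a $\theta$-split Borel admits no imaginary roots --- an imaginary positive root $\alpha$ would force $\fg_\alpha\subset \fb\cap \theta(\fb)=\ft$, impossible --- so the restrictions of all roots of $T$ to $\ft_1$ are nonzero, and a generic $X\in \ft_1\subset \fg_1$ satisfies $\fz_\fg(X)=\ft$, giving $X\in \fg_1\cap \fg^{reg}$.

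The substantive direction $(3)\Rightarrow (2)$ I prove by induction on $\dim \fg$ after reducing to the case $X\in \fg_1\cap \fg^{rss}$. Given $X\in \fg_1\cap \fg^{reg}$ with Jordan decomposition $X=X_s+X_n$ (both in $\fg_1$), the centralizer $\fl:=\fz_\fg(X_s)$ is a $\theta$-stable Levi of rank $\rk \fg$ in which $X_n\in \fl_1$ is a regular nilpotent. If $X_s$ is regular in $\fg$ then $X=X_s$ is already rss; if $\fl\subsetneq \fg$, the induction hypothesis applied to $(\fl,\fl_0)$ --- which satisfies (3) via $X_n\in \fl_1\cap \fl^{reg}$ --- yields a $\theta$-stable Cartan subalgebra $\ft'\subset \fl$ with a regular element $H$ of $\fl$ in $\ft'_1$. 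For generic $\epsilon\in k^\times$ the element $X_s+\epsilon H\in \fg_1$ is then regular semisimple in $\fg$: roots $\alpha$ of $\ft'$ with $\alpha(X_s)=0$ (those whose root space lies in $\fl$) have $\epsilon\alpha(H)\neq 0$, while for roots with $\alpha(X_s)\neq 0$ the perturbation is negligible for small $\epsilon$. Having secured $X\in \fg_1\cap \fg^{rss}$, set $\ft:=\fz_\fg(X)$, a $\theta$-stable Cartan subalgebra containing $X\in \ft_1$. Any Cartan subspace $\fa\supset \ft_1$ of $\fg_1$ is abelian and contains $X$, hence $\fa\subset \fz_\fg(X)=\ft$, so $\fa\subset \ft_1$, and maximality forces $\fa=\ft_1$. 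Then $\fz_\fg(\fa)=\fz_\fg(\ft_1)$ contains $\ft$ and is contained in $\fz_\fg(X)=\ft$ since $X\in \ft_1$, so $\fz_\fg(\fa)=\ft$ is abelian. Connectedness of $Z_G(A)$ for $A$ a subtorus of connected reductive $G$ then yields (2).

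The main obstacle is the base case of the induction, where $X_s\in Z(\fg)$ and $\fl=\fg$. Here one must produce a $\theta$-split Borel from the mere existence of a regular nilpotent $X_n\in \fg_1$, but the unique Borel containing $X_n$ in its nilradical is $\theta$-stable rather than $\theta$-split, so a direct passage is unavailable. Closing this gap requires the structural input that the Cartan of such a $\theta$-stable Borel admits a positive system on which $\theta^*$ has no fixed roots --- essentially the content of Kostant--Rallis in characteristic zero and of Levy's work in positive characteristic.
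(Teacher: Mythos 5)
Your cycle $(1)\Leftrightarrow(2)$, $(1)\Rightarrow(3)\Rightarrow(2)$ is a sound plan, and the first two legs are correct: $(1)\Leftrightarrow(2)$ via Vust together with the observation that a parabolic with toral Levi is a Borel, and $(1)\Rightarrow(3)$ via the absence of imaginary roots relative to a $\theta$-split Borel so that a generic element of $\ft_1$ is already regular in $\fg$. (One quibble: ``negligible for small $\epsilon$'' is not available over an arbitrary algebraically closed $k$; you should phrase the perturbation as: for each root $\alpha$ the linear form $\epsilon\mapsto\alpha(X_s)+\epsilon\alpha(H)$ is not identically zero, so the bad locus is a finite union of points in $\A^1$.) But the gap you flag at the end of $(3)\Rightarrow(2)$ is genuine, and it is not a peripheral technicality — it is the entire content of that implication. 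Your induction peels off the semisimple part, but the base case $\fl=\fg$ (regular nilpotent $X_n\in\fg_1$) is exactly where nothing elementary is left to exploit: the unique Borel containing $X_n$ is $\theta$-stable, not $\theta$-split, and passing from one to the other (or from the fundamental Cartan inside it to a maximally split one) is precisely the structural theorem you would need.

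The paper does not attempt this reduction at all. After disposing of $(1)\Leftrightarrow(2)$ via Vust, it proves $(2)\Leftrightarrow(3)$ directly by citing Levy's Lemma~4.3, which says $\dim Z_G(x)$ is \emph{constant} on $\fg_1^{reg}$ (regular for the $G_0$-action). From that: $(3)$ says the minimum of $\dim Z_G$ over $\fg_1$ is $\rk G$; that minimum is attained exactly on $\fg_1^{reg}$; by constancy every $x\in\fg_1^{reg}$, in particular a regular semisimple one, has $\dim Z_G(x)=\rk G$, so $Z_G(\fa)=Z_G(A)$ is a maximal torus, i.e.\ $(2)$. The converse is immediate. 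This one lemma packages precisely the Kostant--Rallis/Levy input you correctly identified as missing and makes your induction, perturbation, and endgame (which, for the record, is fine once you have an rss element in $\fg_1$) unnecessary. If you want to salvage your route, you would have to import Levy's lemma or an equivalent (e.g.\ the Kostant--Weierstrass section for symmetric pairs) to handle the regular-nilpotent base case — at which point you might as well use it to prove $(2)\Leftrightarrow(3)$ outright, as the paper does.
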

\begin{proof}
 Note that (\ref{item1}) and (\ref{item2}) are equivalent by Proposition \ref{Prop: Vust split parabolics}.
 
 Now let $\fa\subset \fg_1$ be a Cartan subspace of $\fg_1$ and let $A$ be the unique maximal $\theta$-split torus satisfying $\fa=\Lie(A)$ \cite[Lemma 2.4]{Levy}. Then the centralizing Levi subgroup $Z_G(\fa) = Z_G(A)$ is abelian if and only if it is a maximal torus of $G$. The equivalence between (\ref{item2}) and (\ref{item3}) now follows from Lemma 4.3 of \cite{Levy}, which implies that the dimension of $Z_G(x)$ is constant for all $x\in \fg_1^{reg}$.
 \end{proof}
We assume now and for the remainder of the paper that $(\fg,\fg_0)$ is quasi-split. Let $A\subset G$ be a maximal $\theta$-split torus. By Proposition \ref{Prop: quasisplit characterization}, $T:=Z_G(A)$ is a maximal torus. 

\subsection{The little Weyl group and $\theta$-split Borel subgroups}

Associated to the tori $A\subset T$, we have the absolute Weyl group $W_T$ and the little Weyl group $W_A=N_G(A)/Z_G(A)$. For a general symmetric pair, the little Weyl group $W_A$ is not naturally a subgroup of $W_T$, but a subquotient. 
When the symmetric pair is quasi-split, $W_A$ may be identified with the fixed-point subgroup $(W_T)^\theta:$

\begin{Lem}\label{Prop: weyl embedding}
When $\theta$ is quasi-split, there is a natural embedding
\[
W_A\hra W_T,
\]
where $T=Z_G(A)$ is the $\theta$-stable maximal torus containing $A$, where under this inclusion $W_A=(W_S)^\theta$.
\end{Lem}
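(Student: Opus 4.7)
The plan is to build the map $W_A \hookrightarrow W_T$ from the inclusion of normalizers $N_G(A) \subset N_G(T)$ and then check that its image is precisely the $\theta$-fixed subgroup $(W_T)^\theta$ (I read the $S$ in the statement as a typo for $T$). The starting point, guaranteed by the quasi-split hypothesis via Proposition \ref{Prop: quasisplit characterization}(\ref{item2}), is that $T = Z_G(A)$ is a (necessarily $\theta$-stable) maximal torus of $G$; moreover, $A$ is the unique maximal $\theta$-split subtorus of $T$, since any larger $\theta$-split subtorus of $T$ would centralize $A$ and so contradict the maximality of $A$ in $G$.

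First I would note that $N_G(A) \subset N_G(T)$: for $n \in N_G(A)$, conjugation gives $nTn^{-1} = nZ_G(A)n^{-1} = Z_G(nAn^{-1}) = Z_G(A) = T$. Since $Z_G(A) = T$, the induced homomorphism $N_G(A)/T \to N_G(T)/T$ is injective, yielding the desired embedding $W_A \hookrightarrow W_T$.

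Next I would show the image is contained in $(W_T)^\theta$. Given $n \in N_G(A)$ with image $w \in W_A$, the calculation $\theta(n)\,a\,\theta(n)^{-1} = \theta(n\,\theta(a)\,n^{-1})^{-1}\cdot(\ldots)$ -- more cleanly, using $\theta(a) = a^{-1}$ for $a \in A$ and applying $\theta$ to the relation $nan^{-1} = w(a)$ -- shows $\theta(n)$ induces the same automorphism $w$ of $A$ as $n$ does. Hence $\theta(n)n^{-1} \in Z_G(A) = T$, so the class of $\theta(n)$ in $W_T$ equals that of $n$; thus $\theta(w) = w$.

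The main step, and the principal obstacle, is the reverse inclusion $(W_T)^\theta \subset W_A$. Given $n \in N_G(T)$ with $\theta(n) = nt$ for some $t \in T$, I want to show $n \in N_G(A)$. For any $a \in A$, $nan^{-1}$ lies in $T$; applying $\theta$ and using $\theta(a) = a^{-1}$, $\theta(t) \in T$, and the abelianness of $T$ gives $\theta(nan^{-1}) = nt a^{-1} t^{-1} n^{-1} = (nan^{-1})^{-1}$, so $nan^{-1}$ is $\theta$-split. Therefore $nAn^{-1}$ is a $\theta$-split subtorus of $T$ of the same dimension as $A$; by the maximality of $A$ as a $\theta$-split subtorus of $T$ noted above, $nAn^{-1} = A$, so $n \in N_G(A)$. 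This completes the identification $W_A = (W_T)^\theta$.
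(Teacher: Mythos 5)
Your proof is correct and follows essentially the same route as the paper's: deduce $N_G(A)\subset N_G(T)$ from $Z_G(A)=T$ to get the embedding, then run the $\theta$-conjugation computation to identify the image with $(W_T)^\theta$. You are slightly more explicit than the paper at two points it elides — checking the forward inclusion $W_A\subset (W_T)^\theta$ rather than calling it ``easy,'' and, in the reverse inclusion, passing from ``$nan^{-1}$ is $\theta$-inverted'' to ``$n\in N_G(A)$'' via the observation that $nAn^{-1}$ is a connected $\theta$-split subtorus of $T$ and hence equals $A$ by maximality.
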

\begin{proof}

By definition $W_A=N_G(A)/Z_G(A)$, and in this case $Z_G(A)=Z_G(T)=T$. This implies that $N_G(A)\subset N_G(T)$, giving the first claim.

Let $w\in (W_T)^\theta$ and suppose $n_w$ represents $w$. Then $\theta(n_w)=n_wt$ for some $t\in T$. We need to show that $n_w\in N_G(A)$. Indeed, for any $a\in A$, $n_wan_w^{-1}\in T$ and
\begin{align*}
\theta(n_wan_w^{-1})&=\theta(n_w)\theta(a)\theta(n_w)^{-1}\\
					&=n_w(ta^{-1}t^{-1})n_w^{-1}=n_wa^{-1}n_w^{-1}=(n_wan_w^{-1})^{-1},
\end{align*}
so that $n_wan_w^{-1}\in A$ giving the inclusion. Then the second claim now follows easily. 
\end{proof}

\begin{Rem}
 The above proposition gives an inclusion $W_A\subset W_T$ when $A$ is a maximal $\theta$-split torus and $T$ is its centralizer. If we instead consider a $\theta$-fixed Borel subgroup $B$ and $\theta$-stable maximal torus $T'\subset B$ and set $(W_{T'})_0=N_G(T')^\theta/Z_G(T')^\theta$, then we have the inclusions
\begin{equation}\label{eqn: weyl groups}
(W_{T'})_0\subset (W_{T'})^\theta\subset W_{T'}.
\end{equation}
The subscript $0$ is motivated by the fact that it is possible to choose $T'\subset B$ such that $T'_0:=(T'\cap G_0)^
\circ$ is a maximal torus in $G_0$ and $(W_{T'})_0=W(G_0,T'_0)$ is the Weyl group of $(G_0,T'_0)$. This distinction will be relevant in our discussion of resolutions of singularities of nilpotent cones in Section \ref{Section: resolutions of nilp}.
 \end{Rem}
\begin{Ex}
 Consider the simply connected form of $E_6$, and the following involution: let $\rho$ be the automorphism induced by the non-trivial diagram automorphism, and let $s=\check{\al}_0(-1)$, where $\al_0$ is the highest root, and $\check{\al}_0(t)$ is the corresponding cocharacter of $T$. Set $\theta=i_s\circ \rho$, where $i_s$ is conjugation by $s$. Setting $W=W_{T'}$, we have that $W^\theta$ is a Weyl group of type $F_4$. On the other hand, $W_0$ is the Weyl group of $G_0$ (which is type $C_4$). Thus, $[W^\theta: W_0]=3$, and $[W:W^\theta]=45$.

On the other hand, this corresponds to the  \emph{split} involution of type $E_6$ listed in \cite[pg. 549]{Levy}. It follows that $W_A=W_T$. \qed
\end{Ex}

Returning to our maximal $\theta$-split torus $A$ and centralizer $T$, note that there are $|W|$ Borel subgroups containing $A$. By \cite[Proposition 2.9]{springer85}, we know that there exists a $\theta$-split Borel subgroup $B\supset T$. The following proposition says that the $\theta$-split Borel subgroups containing $T$ is a $W_A$-torsor.

\begin{Prop}\label{Prop: split borels}
 Fix a $\theta$-split Borel $B\supset T$. Then any other $\theta$-split Borel $B'$ is of the form $wBw^{-1}$ for some $w\in W_A\subset W_T$. In particular, for any maximal $\theta$-split torus $A$, the set of $\theta$-split Borel subgroups containing it form a $W_A$-torsor.
\end{Prop}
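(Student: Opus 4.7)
The plan is to reduce the question to a combinatorial statement about the action of $\theta$ on $W_T$, using the quasi-split hypothesis to identify $W_T^{\theta}$ with $W_A$ via Lemma \ref{Prop: weyl embedding}.

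First I would show that every Borel $B'\supset A$ in fact contains $T=Z_G(A)$: if $T'\supset A$ is any maximal torus, then $T'\subset Z_G(A)=T$, so $T'=T$ by maximality, and since $B'$ contains a maximal torus containing $A$, that maximal torus must be $T$. Hence the collection of Borels containing $A$ coincides with the collection of Borels containing $T$, which is a $W_T$-torsor under conjugation: $B'=wBw^{-1}$ for some (well-defined up to $T$) $w\in W_T$.

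Next I would analyze the $\theta$-split condition. Since $B$ is $\theta$-split and contains $T$, the intersection $B\cap\theta(B)=T$ shows that $\theta(B)$ is the Borel opposite to $B$ with respect to $T$; in particular $\theta(B)=w_0Bw_0^{-1}$ for the longest element $w_0\in W_T$. Applying $\theta$ to $wBw^{-1}$ and using that $T$ is $\theta$-stable so $\theta$ descends to an involution of $W_T$, one computes
\[
\theta(wBw^{-1})=\theta(w)w_0 B(\theta(w)w_0)^{-1}.
\]
Two Borels $w_1Bw_1^{-1}$ and $w_2Bw_2^{-1}$ (both containing $T$) intersect exactly in $T$ if and only if they are opposite with respect to $T$, i.e.\ $w_2\equiv w_1w_0\pmod{T}$. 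Therefore $wBw^{-1}$ is $\theta$-split precisely when $\theta(w)w_0=ww_0$ in $W_T$, i.e.\ when $\theta(w)=w$, i.e.\ when $w\in W_T^\theta$.

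Finally I would invoke Lemma \ref{Prop: weyl embedding} to identify $W_T^\theta=W_A$, so the $\theta$-split Borels containing $T$ are exactly $\{wBw^{-1}:w\in W_A\}$. The natural $W_T$-action on Borels containing $T$ is simply transitive, and its restriction to $W_A$ is therefore free and transitive on this subset, giving the $W_A$-torsor structure. The main (minor) subtlety will be the compatibility of $\theta$ with the quotient $N_G(T)\to W_T$ — one needs $\theta$ to preserve $N_G(T)$, which is automatic since $T$ is $\theta$-stable — and the description of $\theta(B)$ as the opposite Borel, which is immediate from the $\theta$-split assumption.
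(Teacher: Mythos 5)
Your proof is correct and takes essentially the same route as the paper: both arguments hinge on observing that $\theta(B)$ is the opposite Borel to $B$ with respect to $T$, computing $\theta(wBw^{-1})=\theta(w)\theta(B)\theta(w)^{-1}$, and then characterizing $\theta$-splitness in $W_T$. You phrase the oppositeness condition explicitly via the longest element $w_0$ (so that $\theta$-splitness of $wBw^{-1}$ becomes $\theta(w)w_0=ww_0$, i.e.\ $\theta(w)=w$), whereas the paper leaves $w_0$ implicit and instead invokes the fact that $T\subsetneq B\cap wB^{op}w^{-1}$ whenever $w\neq 1$; these are the same observation dressed differently. Your preliminary remark that every Borel containing $A$ already contains $T=Z_G(A)$ (since any maximal torus containing $A$ must lie in, hence equal, $T$) is a nice clarifying step the paper leaves implicit, and it cleanly justifies passing from Borels containing $A$ to the $W_T$-torsor of Borels containing $T$.
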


\begin{Rem}
A slight variation of this argument shows that there is a $W_A$-torsor of minimal $\theta$-split parabolic subgroups $P$ containing a maximal $\theta$-split torus $A$ for arbitrary symmetric pairs. We leave the details to the reader.
\end{Rem}

\begin{proof}
Recall $W_A$ is the fixed-point subgroup of the induced action on $W=W_T$.  Any $w\in W^\theta$ takes $B$ to another $\theta$-split Borel subgroup. Indeed,
\begin{align*}
wBw^{-1}\cap\theta(wBw^{-1})&=wBw^{-1}\cap \theta(w)\theta(B)\theta(w)^{-1}\\
							&=wBw^{-1}\cap w\theta(B)w^{-1}\\ 
							&=w(B\cap\theta(B))w^{-1}=wTw^{-1}=T.
\end{align*}

To finish, for any other Borel $vBv^{-1}$ where $\theta(v)\neq v$, we claim that
\[
T\subsetneq vBv^{-1}\cap \theta(v)B^{op}\theta(v)^{-1}.
\]
Conjugating by $v$, the claim is equivalent to $T\subsetneq B\cap wB^{op}w^{-1}$ for some $w\neq1\in W_T$. This last claim is obvious by general theory, so we conclude that $vBv^{-1}$ is not $\theta$-split.
\end{proof}

\subsection{Canonical involution on the universal Cartan} \label{Section: quasisplit}

We end this section by recalling the universal Cartan subspace $\fa$ of a quasi-split symmetric pair $(\fg,\fg_0)$, and showing that the universal Cartan $\ft$ of $\fg$ inherits a canonical involution $\theta_{can}:\ft\to \ft$ such that $\fa$ may be identified as the $(-1)$-eigenspace. While we expect this is well known, we do not know of a reference for this result. We will make use of the induced embedding of universal Cartans $\fa\subset \ft$ in Section \ref{Section: relative Groth}.

\subsubsection{Canoncial Cartan of the symmetric variety} Let $X=G/H$ be a homogeneous variety of $G$ admitting an open orbit for some Borel subgroup $B$. Such varieties are called spherical, and symmetric varieties are special cases. To any such variety, one may attach a conjugacy of parabolic subgroups characterized as follows: let $B\subset G$ be a Borel subgroup, and let $\mathring{X}$ be the open $B$-orbit on $X$. We set $P(X)\supset B$ to be the maximal standard parabolic subgroup stabilizing $\mathring{X}$:
\[
P(X)=\{g\in G: g\mathring{X} =\mathring{X}\}.
\]
Define the \emph{universal Cartan subgroup of $G$} as the quotient $\mathcal{T}=B/[B,B]$. Note that for any other Borel subgroup $B'$, there is a canonical isomorphism
\[
\mathcal{T}=B/[B,B]\cong B'/[B',B'],
\]
justifying the name. This quotient inherits an action of the Weyl group $W$ of $G$, and the restriction of the quotient $B\to \mathcal{T}$ to any maximal torus $H\subset B$ induces a $W$-equivariant isomorphism $H\xrightarrow{\sim}\mathcal{T}$. We also have the Lie algebra version $\mathfrak{s}=\fb/[\fb,\fb]$; this is the universal Cartan subalgebra, which also inherits a $W$-action.

There is a canonical torus $\mathcal{A}_X$ associated to the variety $X$, known as the \emph{universal Cartan of $X$}. One may realize $\mathcal{A}_X$ as the quotient of $P(X)$ through which $P(X)$ acts on the quotient $U\backslash\backslash \mathring{X}$ where $U\subset B$ is the unipotent radical of $B$. In particular, we have quotient homomorphism of universal Cartans $\mathcal{T}\to \mathcal{A}_X$, and a corresponding map of Lie algebras $\ft\to \fa_X.$ Moreover, there is a finite Coxeter group $W_X$ associated to $X$, called the little Weyl group of $X$, which may be realized as a subquotient of $W$ and so that the quotient $\ft\to \fa_X$ is equivariant with respect to the appropriate subgroup of $W$.  The rank of $X$ is defined to be the rank of $\mathcal{A}_X$. 

While there are general issues with extending the theory of spherical varieties to fields of positive characteristic, this is not an issue in the case of a symmetric space $X^\theta=G/G_0$ (see \cite{Richardson} and \cite{Levy}), at least for large enough characteristics. We merely reference the terminology of general spherical varieties above for convenience. In this case, $P(X^\theta)$ is conjugate to a minimal $\theta$-split parabolic subgroup. In particular, $P(X^\theta)$ is a Borel subgroup under the quasi-split assumption. Moreover, $W_X$ is precisely the little Weyl group associated to a maximal $\theta$-split torus.
\begin{Lem}\label{Lem: canonical torus match}
For any maximal $\theta$-split torus $A$, there is an isogeny of tori $A\to\mathcal{A}_X.$
In particular, the two Lie algebras $\Lie(A)$ and $\fa_X$ are (non-canonically) isomorphic by an isomorphism which intertwines the actions of $W_A\cong W_X$.
\end{Lem}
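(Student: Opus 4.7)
The plan is to identify $\mathcal{A}_X$ with an explicit quotient of the maximal torus $T = Z_G(A)$, and then to exhibit the claimed isogeny as the composition of the inclusion $A \hookrightarrow T$ with this quotient map. By Proposition \ref{Prop: split borels}, we may choose a $\theta$-split Borel $B \supset T$, so that $B \cap \theta(B) = T$. Writing $B = T \ltimes U$, the fact that $\theta$ inverts $A$ forces $\theta(U) = U^-$, and since $U \cap U^- = 1$, an immediate calculation yields $B \cap G_0 = T^\theta$. Taking $eG_0 \in X = G/G_0$ as base point, the $B$-orbit through this point is therefore $\mathring{X} \cong B/T^\theta$, of dimension $\dim U + \dim A$; using the root-space description of $\fg_1$ with respect to $T$ one checks that this equals $\dim X$, so the orbit is indeed open.

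Since $U$ is normal in $B$ with quotient $B/U = T$, we obtain $U\backslash\backslash \mathring{X} \cong T/T^\theta$, on which $B$ acts through its abelian quotient $T \twoheadrightarrow T/T^\theta$. Comparing with the definition of $\mathcal{A}_X$ produces a canonical identification $\mathcal{A}_X \cong T/T^\theta$. By the maximality of $A$ (together with $T = Z_G(A)$), the torus $A$ is the identity component of $\{t \in T : \theta(t) = t^{-1}\}$, and the standard decomposition of a torus under an involution gives $T = T^\theta \cdot A$ with finite intersection. Consequently the composition
\[
A \hookrightarrow T \twoheadrightarrow T/T^\theta \cong \mathcal{A}_X
\]
is an isogeny, proving the first assertion. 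At the level of Lie algebras, both $\Lie(A)$ and $\fa_X = \Lie(T)/\Lie(T^\theta)$ identify canonically with the $(-1)$-eigenspace of $\theta$ on $\Lie(T)$ (using $\mathrm{char}(k) \neq 2$), and the induced map is an isomorphism of vector spaces.

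Finally, by Proposition \ref{Prop: weyl embedding}, $W_A = (W_T)^\theta$, so elements of $W_A$ act on $T$ preserving both $A$ and $T^\theta$; the induced actions on the source and target of the displayed isogeny therefore agree, and match the canonical action of $W_X$ on $\mathcal{A}_X$ under $W_A \cong W_X$. The main (mild) technical point is to reconcile the abstract definition of $\mathcal{A}_X$, given via the action on the horospherical quotient, with the elementary description $T/T^\theta$ — but this is precisely what the explicit realization of $\mathring{X}$ as $B/T^\theta$ accomplishes, and there is no serious obstacle.
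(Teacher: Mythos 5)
Your proof is correct and follows essentially the same route as the paper's: choose a $\theta$-split Borel $B\supset T$, compute $B\cap G_0 = T^\theta$, identify $\mathcal{A}_X$ with $T/T^\theta$ via the open $B$-orbit, and observe that $A\hookrightarrow T\twoheadrightarrow T/T^\theta$ is an isogeny. The only cosmetic difference is that the paper cites \cite[Lemma 1.3]{Levy} for the isogeny rather than unwinding the decomposition $T = T^\theta\cdot A$ directly, and the existence of a $\theta$-split Borel containing $T$ is properly due to Springer (as noted just before the paper's Proposition \ref{Prop: split borels}) rather than to that proposition itself.
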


\begin{proof}
Let $T=Z_G(A)$. For any $\theta$-split Borel subgroup $B\supset T$, consider the canonical isomorphism $\fb/[\fb,\fb]\cong \ft$. Restricting the quotient to $\Lie(T)$ induces an isomorphism $\Lie(T)\cong \ft$.

 The $\theta$-split condition implies that if $x=eG_0\in (X^\theta)^{open}$, then $B_x=B\cap G_0= B^\theta=T^\theta=T\cap G_0$. It follows from \cite[Lemma 1.3]{Levy} that $A\hra T\to T/T^\theta\cong U\backslash B/B_x\cong \mathcal{A}$ is an isogeny. Passing to Lie algebras gives the second statement, and the statement about Weyl group actions follows from the $W$-equivariance of $\ft\to \fa$.
\end{proof}

\subsubsection{The canonical involution}
 Hereafter, we will denote the universal Cartan of $G/G_0$ simply by $\mathcal{A}$, its Lie algebra by $\fa$, and the little Weyl group by $W_\fa$. Additionally, the notation $\ft$ will always denote the the Lie algebra of the universal Cartan of $G$ . The previous lemma and discussion imply that this is consistent with our previous notation, at least up to a non-canonical isomorphism. In particular, we have an inclusion of the Weyl groups $W_\fa\subset W$.

As is visible in the proof of the lemma, the isomorphism $\ft\cong \Lie(T)$ induced by any $\theta$-split Borel descends through the quotient $q:\Lie(T)\to\Lie(T)/\Lie((T\cap G_0)^\circ)\cong \Lie(A)$ to give a commutative diagram
\begin{equation}\label{eqn: diagram univ cartans}
\begin{tikzcd}
\ft\arrow[d,"\cong"] \arrow[r] &\fa \ar[d,"\cong"]\\
\Lie(T) \arrow[r,"q"] & \Lie(A). 
\end{tikzcd}
\end{equation}


 There is a natural splitting of $\Lie(T)\to \Lie(A)$ induced by the involution $\theta$ acting on $\Lie(T)$
\[
\Lie(T)=\Lie(A)\oplus\Lie(A)^\perp,
\]
where $\Lie(A)^\perp=\{X\in \Lie(T): \theta(X)=X\}$, and $q$ corresponds to the projection onto the first factor. The commutativity of the diagram induces a splitting $\fa\hra \ft$ for any choice of $\theta$-stable Borel subgroup. We claim that the image of this splitting is in fact independent of $A$, $T$, and $B$. 
\begin{Prop}\label{Prop: canonical involution}
There exists a canonical involution $\theta_{can}:\ft\to \ft$ inducing a decomposition
\[
\ft=\ft_0\oplus\ft_1.
\]
Moreover $\ft_1\cong \fa$ and the image of the splitting $\fa$ is $\ft_1$.
\end{Prop}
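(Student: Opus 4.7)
The plan is to construct an involution $\theta_B$ on $\ft$ for each $\theta$-split Borel subgroup $B$, show the construction is independent of $B$, and identify the resulting involution $\theta_{can}$ with the splitting from diagram (\ref{eqn: diagram univ cartans}).

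First, for any $\theta$-split Borel $B$, the $\theta$-split condition gives that $T_B := B \cap \theta(B)$ is a maximal torus of $G$, and $\theta$ stabilizes $T_B$. Restricting the quotient $\fb \twoheadrightarrow \fb/[\fb,\fb] = \ft$ to $\Lie(T_B)$ yields an isomorphism $\Lie(T_B) \xrightarrow{\sim} \ft$. Transporting $\theta|_{\Lie(T_B)}$ through this isomorphism defines an involution $\theta_B : \ft \to \ft$.

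Next, I would show that $\theta_B = \theta_{B'}$ for any two $\theta$-split Borels $B, B'$. By Proposition \ref{Prop: Vust split parabolics} (applied to $\theta$-split Borels, which are minimal $\theta$-split parabolics since $B \cap \theta(B) = T_B = Z_G(A)$), there exists $g \in G_0$ with $B' = g B g^{-1}$. Then $T_{B'} = g T_B g^{-1}$, and $\mathrm{Ad}(g)|_{\Lie(T_B)} : \Lie(T_B) \to \Lie(T_{B'})$ intertwines the restrictions of $\theta$, because $\theta(g) = g$. On the other hand, the canonical isomorphism $\fb/[\fb,\fb] \xrightarrow{\sim} \fb'/[\fb',\fb']$ is induced by $\mathrm{Ad}(g)$: any other conjugating element has the form $gb$ with $b \in B$, and $\mathrm{Ad}(b)$ acts trivially on the abelian quotient $\fb/[\fb,\fb]$. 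Thus under the canonical identification of universal Cartans, $\theta_B$ and $\theta_{B'}$ coincide.

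This defines the sought-after involution $\theta_{can} : \ft \to \ft$, and I let $\ft_0$ and $\ft_1$ denote its $(+1)$- and $(-1)$-eigenspaces. To match this with diagram (\ref{eqn: diagram univ cartans}), fix a $\theta$-split Borel $B$ containing a maximal $\theta$-split torus $A$, so that $T := Z_G(A) = T_B$. Under $\ft \cong \Lie(T)$, $\theta_{can}$ corresponds to $\theta|_{\Lie(T)}$, whose $(-1)$-eigenspace is exactly $\Lie(A)$ (since $\theta$ inverts $A$), and whose $(+1)$-eigenspace is $\Lie(A)^\perp$. The splitting $\fa \hookrightarrow \ft$ of diagram (\ref{eqn: diagram univ cartans}) is, by construction, the inclusion $\Lie(A) \hookrightarrow \Lie(T)$ followed by the canonical iso $\Lie(T) \cong \ft$, so its image is precisely $\ft_1$.

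The main obstacle is the compatibility check in the middle step: one must verify that for $B' = gBg^{-1}$ with $g \in G_0$, the canonical isomorphism of universal Cartans is truly the one induced by $\mathrm{Ad}(g)$. This reduces to the standard fact that the canonical isomorphism $\fb/[\fb,\fb] \cong \fb'/[\fb',\fb']$ agrees with restriction to any shared Cartan, combined with the observation that $\mathrm{Ad}(B)$ acts trivially on $\fb/[\fb,\fb]$; once this is in hand, the $G_0$-equivariance of the construction propagates through all choices.
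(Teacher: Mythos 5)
Your proof is correct and establishes the proposition by a route closely related to the paper's but noticeably cleaner. The paper defines the candidate involution from an \emph{arbitrary} pair $(B,S)$ of a Borel subgroup and a maximal torus, realizing $S=B\cap\theta^g(B)$ for a conjugate involution $\theta^g$ making $B$ $\theta^g$-split, and then checks independence of all choices by showing that the conjugating map $\Ad(g_1)\colon\Lie(S)\to\Lie(T)$ (for $g_1\in G$ carrying one pair to another, not assumed to lie in $G_0$) is determined up to the $T\times S$-action and intertwines the two involutions $\theta^g$ and $\theta^h$. You instead restrict attention to Borel subgroups that are $\theta$-split for the original $\theta$; since such Borels are minimal $\theta$-split parabolics (you correctly invoke Proposition \ref{Prop: Vust split parabolics}), any two are conjugate by some $g\in G_0$, and then $\Ad(g)$ commutes with $\theta$ outright because $\theta(g)=g$, which makes the intertwining immediate. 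Combined with the standard fact that the canonical isomorphism of universal Cartans is induced by $\Ad(g)$ (well defined since $\Ad(B)$ acts trivially on $\fb/[\fb,\fb]$), this shows the induced involution on $\ft$ is independent of the choice of $\theta$-split $B$, which is all that is needed to define $\theta_{can}$ and identify $\ft_1$ with $\fa$ via the commutative square (\ref{eqn: diagram univ cartans}). What the paper's more elaborate formulation buys is the ability to read off $\theta_{can}$ from a Borel that is split only for a \emph{conjugate} involution $\theta^g$; this extra flexibility is not needed for the proposition itself, but it is exploited later in the proofs of Proposition \ref{Prop: regular characterization} and Lemma \ref{Lem: fixed subgroup}, so the paper front-loads that work here.
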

\begin{proof}
For any Borel subgroup $B$, there exists $gG_0\in G/G_0$ such that $B$ is $\theta^g$-split, where $\theta^g(h)=g\theta(g^{-1}hg)g^{-1}$ is the conjugate involution. Moreover, any other such involution is of the form $\theta^{bg}$ for some $b\in B$. Note that if $S=B\cap \theta^g(B)$ is the $\theta^g$-stable maximal torus of $B$ determined by $g$, then $bSb^{-1}=B\cap \theta^{bg}(B)$ for any $b\in B$. Thus for any pair $(B,S)$, there exists a conjugate involution $\theta^g$ such that $B$ is $\theta^g$-split and $S=B\cap \theta^g(B)$ is the distinguished $\theta^g$-stable Cartan subgroup.

Fix a Borel $B$ with an involution $\theta^g$ as above, and denote by $\theta^g$ be the induced involution on $\Lie(S)$. We have the induced isomorphism
\[
\varphi_B:\Lie(S)\hra \fb\to \fb/[\fb,\fb]\cong \ft,
\]
and consider the involution $\theta^\ast$ on $\ft$ induced by this isomorphism. For any other Borel subgroup $P$ and involution $\theta^h$ such that $P$ is $\theta^h$-split with corresponding stable torus $T$, there is a $g_1\in G$ such that $(P,T)=(g_1Bg_1^{-1},g_1Sg_1^{-1})$. 
The choice of $g_1$ is determined up to the $T\times S$-action $(t,s)\cdot g_1\mapsto tg_1s$, so that the induced map $\ad(g_1):\Lie(S)\xrightarrow{\sim}\Lie(T)$ is independent of all choices and is equivariant with respect to the involutions:
\[
\theta^h(\ad(g_1)(X))=\ad(g_1)(\theta^{g}(X)).
\]

Let $\theta^{\ast\ast}$ denote the involution on $\ft$ induced by $\varphi_P:\Lie(T)\to\ft$. We have the commutative diagram
\[
\begin{tikzcd}
\Lie(S)\arrow[d,"\varphi_B"] \arrow[r, "\ad(g_1)"] &\Lie(T) \ar[d,"\varphi_P"]\\
\ft \arrow[r,"="] & \ft. 
\end{tikzcd}
\]
Indeed, the unique isomorphism $\fb/[\fb,\fb]\cong\mathfrak{p}/[\mathfrak{p},\mathfrak{p}]$ is induced by $\ad(g_1)$. Note that for $t\in \ft$ there exists a unique $X\in \Lie(S)$ such that $t\cong X\pmod{[\fb,\fb]}$ and a unique $Y\in \Lie(T)$ such that $t\cong Y\pmod{[\fp,\fp]}$. Then the above diagram implies $\ad(g_1)(X)=Y$, so that
\begin{align*}
\theta^{\ast\ast}(t)&\cong \theta^h(Y)\qquad\quad\:\;\pmod{[\fp,\fp]}\\
		&=\ad(g_1)(\theta^{g}(X))\pmod{[\fp,\fp]}\\
		&\cong \theta^{g}(X)\qquad\quad\:\;\pmod{[\fb,\fb]}\cong \theta^\ast(t),
\end{align*}
where every isomorphism used is the canonical one. In particular, the two involutions are identified. We conclude that induced involution $\theta_{can}:\ft\to \ft$ is independent of the choices involved. Let $\ft=\ft_0\oplus \ft_1$ be the induced decomposition, where $\ft_i$ is the $(-1)^i$-eigenspace of $\theta_{can}$.

Now consider the case of a $\theta$-split Borel $B$ with maximal $\theta$-split torus $A\subset S=B\cap \theta(B)$. Then the construction of $\theta_{can}$ implies we have an $(\theta,\theta_{can})$-equivariant isomorphism
\[
\varphi_B:\Lie(S)\xrightarrow{\sim}\ft.
\]
In particular, we obtain an isomorphism $\Lie(A)\cong \ft_1$ of $(-1)$-eigenspaces. By the commutative diagram (\ref{eqn: diagram univ cartans}), it follows that the section $\fa\hra \ft$ is an isomorphism onto $\ft_1$.
\end{proof}

We remark that a similar argument produces a canonical involution $\theta_{can}:\mathcal{T}\to\mathcal{T}$ which differentiates to the involution discussed in the proposition. 

\begin{Cor}\label{Cor: universal torus}
Let $\mathcal{T}$ be the universal Cartan of $G$. There is a canonical involution $\theta_{can}:\mathcal{T}\to\mathcal{T}$. In particular, there is a universal regular fixed-point subgroup $\mathcal{T}_0=\mathcal{T}^{\theta_{can}}$. 
\end{Cor}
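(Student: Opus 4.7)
The plan is to mimic the argument of Proposition \ref{Prop: canonical involution} at the group level, using the canonical isomorphism $S\iso B/[B,B]=\mathcal{T}$ for any maximal torus $S\subset B$ to transport an involution from $S$ to $\mathcal{T}$ and then check independence of all choices.

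First, I would set up the data: fix a Borel $B\subset G$, and choose a coset representative $g$ such that $B$ is $\theta^g$-split, where $\theta^g(h)=g\theta(g^{-1}hg)g^{-1}$. Set $S=B\cap\theta^g(B)$; this is a $\theta^g$-stable maximal torus, and as in the proof of Proposition \ref{Prop: canonical involution} the pair $(B,S)$ determines the pair $(\theta^g|_S,S)$ up to the ambiguity of the conjugate involution, but the induced involution on $S$ itself is canonical once $(B,S)$ is fixed. Let $\varphi_B:S\iso \mathcal{T}$ be the canonical isomorphism obtained by composing the inclusion $S\hookrightarrow B$ with the quotient $B\twoheadrightarrow B/[B,B]=\mathcal{T}$, and let $\Theta_B^\ast$ be the involution on $\mathcal{T}$ transported from $\theta^g$ on $S$ via $\varphi_B$.

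Next, I would show that $\Theta_B^\ast$ is independent of $B$. Given another Borel $P$ with $\theta^h$-split structure and distinguished stable torus $T=P\cap\theta^h(P)$, there is a $g_1\in G$, determined up to the $T\times S$-action $(t,s)\cdot g_1=tg_1s$, conjugating $(B,S)$ to $(P,T)$. The conjugation $\Ad(g_1):S\iso T$ intertwines $\theta^g|_S$ and $\theta^h|_T$ (this is exactly the equivariance verified in the proof of Proposition \ref{Prop: canonical involution}), and the canonical identification $B/[B,B]\cong P/[P,P]$ is induced by $\Ad(g_1)$. Chasing the commutative square
\[
\begin{tikzcd}
S\ar[d,"\varphi_B"']\ar[r,"\Ad(g_1)"]&T\ar[d,"\varphi_P"]\\
\mathcal{T}\ar[r,"="]&\mathcal{T}
\end{tikzcd}
\]
shows $\Theta_B^\ast=\Theta_P^\ast$. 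We may therefore define $\theta_{can}:\mathcal{T}\to\mathcal{T}$ as this common involution; it is an algebraic group involution because it is transported from one on a torus via an isomorphism of algebraic groups.

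Finally, differentiating $\theta_{can}:\mathcal{T}\to\mathcal{T}$ yields an involution on $\Lie(\mathcal{T})=\ft$ which, by construction via the same isomorphisms $\varphi_B$, agrees with the involution of Proposition \ref{Prop: canonical involution}. The fixed-point subgroup $\mathcal{T}_0=\mathcal{T}^{\theta_{can}}$ is then a well-defined closed subgroup of $\mathcal{T}$, and by Lemma \ref{Lem: canonical torus match} applied to any $\theta$-split Borel with distinguished torus $T=Z_G(A)$, the identification $\varphi_B$ identifies $\mathcal{T}_0$ with the regular fixed-point subgroup $T^\theta=Z_G(A)^\theta$. The only subtle point, and the main thing to verify carefully, is the independence of $\Theta_B^\ast$ from the choice of conjugate involution $\theta^g$ within its $B$-orbit; but this is immediate since the $T\times S$-ambiguity in $g_1$ acts trivially on $\mathcal{T}$ after passing through $\varphi_B$ and $\varphi_P$.
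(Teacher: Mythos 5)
Your proposal is correct and is exactly what the paper has in mind: the paper does not give a separate proof of this corollary but simply remarks that ``a similar argument produces a canonical involution $\theta_{can}:\mathcal{T}\to\mathcal{T}$ which differentiates to the involution discussed in the proposition,'' and your argument is the straightforward group-level transcription of the proof of Proposition \ref{Prop: canonical involution}. One small note: the identification $\mathcal{T}_0\cong T^\theta$ at the end follows directly from the $(\theta^g,\theta_{can})$-equivariance of $\varphi_B$ built into your construction, not from Lemma \ref{Lem: canonical torus match} (which concerns the isogeny $A\to\mathcal{A}_X$), and the paper's $\ad(g_1)$ should at the group level be read as conjugation $\mathrm{Int}(g_1)$; neither affects the validity of the argument.
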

We will use this corollary in Section \ref{Section: regular stabilizers} when the universal fixed-point torus $\mathcal{T}_0$ is used to study the universal stabilizer group scheme.

\section{Nilpotent cones of symmetric spaces}\label{Section: resolutions of nilp}

In this section, we discuss the nilpotent cone $\caln_{1}=\caln\cap\fg_1$ and desingularizations of nilpotent $G_0$-orbits. We introduce the notion of a regular $\theta$-stable Borel subgroup for use in Section \ref{Section: relative Groth}. Aside from this definition, this section will be used in Section \ref{Sec: Smoothness} to study the generalization of the Grothendieck-Springer resolution over the entire space $\fg_1$.

 The variety $\caln_1$ need not be irreducible.
In fact, there is a bijection between connected components of $\caln_1^{reg}= \caln_1\cap \fg_1^{reg}$ and irreducible components of $\caln_1$. Motivated by this, we adopt the notation $\pi_0(\caln_1)$ to denote the set of irreducible components of $\caln_1$. We refer the reader to \cite{KR71} in characteristic zero and \cite{Levy} in good characteristic for further details. 
 There is a general construction of resolutions of singularities for nilpotent orbit closures due to \cite{reeder1995, Sekiguchi} which generalizes the Springer resolution of the nilpotent cone. As we are working in the special case of quasi-split symmetric spaces and only consider resolutions of regular nilpotent orbits, we describe the resolution in a simpler, albeit less general fashion.

Fix a \emph{regular nilpotent} $e\in\caln_1$ which lies in the Lie algebra $\Lie(B)$ of a unique Borel subgroup $B$, which is necessarily $\theta$-stable. Recall the Springer resolution of the nilpotent cone of $\fg$:
\[
\widetilde{\caln}=\{(X,B)\in \caln\times \mathcal{F}l_G: X\in \Lie(B)\}\cong G\times^B\fn,
\]
where we may choose $B$ to be our $\theta$-stable Borel subgroup. The map $\check{\pi}:\widetilde{\caln}\lra \caln$ given by $(X,B)\mapsto X$ is the Springer resolution of singularities. Consider the involution $\theta^\ast$ on $\widetilde{\caln}$ defined by
\[
\theta^\ast(X,B)= (-\theta(X),\theta(B)).
\]

 Fixing a $\theta$-stable torus $T\subset B$, we denote for the remainder of this section $W=W_T$. In the previous section (\ref{eqn: weyl groups}), we introduced two subgroups $W_0\subset W^\theta\subset W$. Reeder shows in \cite{reeder1995} that the fixed-point variety $\widetilde{\caln}^{\theta^\ast}$ may be decomposed as a disjoint union of vector bundles over $\Fl_{G_0}$ indexed by $W_0\backslash W^\theta$:
\[
\widetilde{\caln}^{\theta^\ast}=\bigsqcup_{w\in W_0\backslash W^\theta}E_w.
\]
The restriction of $\check{\pi}$ naturally maps to $\caln_1$, and we have the following:
\begin{Prop}\label{Prop: nilp fixed points}
Assume that $\theta$ is quasi-split. Then for each component $\caln_1^i$, there exists exactly one $w=w(i)\in W_0\backslash W^\theta$ such that the restriction of $\check{\pi}$ to $E_{w(i)}$ is a resolution of singularities
\[
\check{\pi}: E_{w(i)}\lra \caln_1^i.
\]
\end{Prop}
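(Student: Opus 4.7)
The plan is to deduce the proposition from the key observation that a regular nilpotent element in $\fg_1$ lies in a unique Borel, which is automatically $\theta$-stable and regular. First I would note that $(X,B)\in \widetilde{\caln}^{\theta^\ast}$ if and only if $\theta(X)=-X$ and $\theta(B)=B$, so the restriction of $\check{\pi}$ to $\widetilde{\caln}^{\theta^\ast}$ factors through $\caln_1$. For any $e\in \caln_1^{reg}$, the unique Borel $B_e\subset G$ containing $e$ must also contain $-e=\theta(e)\in \theta(B_e)$, and uniqueness forces $\theta(B_e)=B_e$; by construction $B_e$ is a regular $\theta$-stable Borel. This yields a section
\[
\sigma:\caln_1^{reg}\hookrightarrow \widetilde{\caln}^{\theta^\ast},\qquad e\longmapsto (e,B_e),
\]
of the proper morphism $\check{\pi}|_{\widetilde{\caln}^{\theta^\ast}}$ over the regular locus.

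Next, I would use $\sigma$ to define the function $i\mapsto w(i)$. Each $E_w$ is a vector bundle over $\Fl_{G_0}$, hence smooth and irreducible, so $\widetilde{\caln}^{\theta^\ast}=\bigsqcup_w E_w$ is the decomposition into connected components. Under the bijection $\pi_0(\caln_1^{reg})\cong \pi_0(\caln_1)$ noted earlier (each component $\caln_1^i$ corresponding to its dense regular open $\caln_1^i\cap\caln_1^{reg}$), the continuous map $\sigma$ sends each connected component of $\caln_1^{reg}$ into a unique $E_w$, producing a well-defined function $i\mapsto w(i):\pi_0(\caln_1)\to W_0\backslash W^\theta$.

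Finally, I would verify that $\check{\pi}|_{E_{w(i)}}$ is a resolution of singularities of $\caln_1^i$ and that $w(i)$ is the unique index with this property. Since $\check{\pi}$ is proper and $E_{w(i)}$ is closed in $\widetilde{\caln}^{\theta^\ast}$, the image $\check{\pi}(E_{w(i)})$ is a closed irreducible subvariety of $\caln_1$ containing the dense open $\sigma(\caln_1^i\cap\caln_1^{reg})$, hence equal to $\caln_1^i$. The section $\sigma$ furnishes a two-sided inverse to $\check{\pi}|_{E_{w(i)}}$ on this dense open, so the map is birational, and smoothness of $E_{w(i)}$ then gives a resolution of singularities. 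Uniqueness of $w(i)$ is immediate from the fact that $\check{\pi}^{-1}(e)\cap \widetilde{\caln}^{\theta^\ast}=\{(e,B_e)\}$ for every $e\in \caln_1^{reg}$, so each regular nilpotent has preimage in exactly one $E_w$. The substantive input is Reeder's geometric description of the fixed-point components $E_w$ from \cite{reeder1995}; once that is in hand, the remaining argument reduces to these connectedness and birationality considerations powered by the unique-Borel property of regular nilpotents.
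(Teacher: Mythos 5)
Your argument is correct and, in effect, unpacks the citations that constitute the paper's proof: the paper simply invokes \cite[Proposition 3.2]{reeder1995} and the proof of \cite[Proposition 4.1]{reeder1995} together with the quasi-split hypothesis, while you reconstruct the underlying geometry. The one place you should make the role of the quasi-split hypothesis explicit is your opening claim that $e\in\caln_1^{reg}$ lies in a \emph{unique} Borel subgroup: this holds because $e$ is a regular nilpotent of $\fg$, not merely regular for the $G_0$-action on $\fg_1$, and the equality $\fg_1^{reg}=\fg_1\cap\fg^{reg}$ that makes these coincide is precisely what quasi-splitness buys (Proposition~\ref{Prop: quasisplit characterization} together with \cite[Lemma 4.3]{Levy}). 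Once that is noted, the rest of the argument --- the section $\sigma$ landing in $\widetilde{\caln}^{\theta^\ast}$, the assignment $i\mapsto w(i)$ via connected components, birationality forced by the uniqueness of $B_e$, and uniqueness of $w(i)$ because the Springer fibre over a regular nilpotent is a single point --- is exactly the geometric content behind Reeder's results and is carried out correctly.
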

\begin{proof}
This follows from \cite[Proposition 3.2]{reeder1995}, the proof of \cite[Proposition 4.1]{reeder1995}, and our assumption that $(\fg,\fg_0)$ is quasi-split.
\end{proof}
In general, the number $[W^\theta:W_0]$ is greater than $\#\pi_0(\caln_1)$. In particular, there may exist $\theta$-stable Borel subgroups $B\subset G$ such that $\Lie(B)\cap\caln_1^{reg}=\emptyset$. 
\begin{Ex}
Consider the split involution for the exceptional Lie algebra $\fg_2$. Then $\fg_0\cong \fsl(2)\oplus \fsl(2)$ and $\fg_1\cong V\boxtimes\mathrm{Sym}^3(V)$, where $V$ is the standard representation. In this case, the nilpotent cone $\caln_1$ is irreducible \cite[Lemma 6.19 (c)]{Levy}, but $[W^\theta:W_0]=3$ since this is an inner involution. Thus, there is only a single orbit of $\theta$-stable Borel subgroups meeting $\caln^{reg}_1$, and there are two orbits which do not.
\end{Ex}
\begin{Def}
 Suppose that $B\in\Fl_{G}^\theta$ is a $\theta$-stable Borel subgroup.  If this intersection $\Lie(B)\cap \caln_1^{reg}$ is non-empty, we say that $B$ is a \textbf{regular} $\theta$-stable Borel subgroup. 
\end{Def}
It is only regular $\theta$-stable Borel subgroups that contribute to the fibers of the resolutions in Proposition \ref{Prop: nilp fixed points}. 
Denote the set of \emph{regular} $\theta$-stable Borel subgroup of $G$  by $(\Fl_{G}^\theta)^{reg}$, so that
\[
(\Fl_{G}^\theta)^{reg}=\bigsqcup_{i\in \pi_0(\caln_1)}C_{w(i)},
\]
where $C_{w(i)}=\{B\in\Fl_{G}: \Lie(B)\cap (\caln_1^i)^{reg}\neq \emptyset\}$ is the $G_0$-orbit of regular $\theta$-stable Borel subgroups whose Lie algebras meet the regular locus of the component $\caln_1^i\subset\caln_1$. Note that each $C_{w(i)}$ is a closed $G_0$-orbit by Proposition 2.3 of \cite{reeder1995}.
\section{A simultaneous resolution over the regular locus}\label{Section: relative Groth}


In this section, we define and study a subscheme $\fgbul\subset \fg_1\times_\fg\widetilde{\fg}$ which fits into a diagram analogous to the Grothendieck-Springer resolution where $\ft$ is replaced by the universal Cartan subspace of $\fg_1$ and prove Theorem \ref{Thm: main theorem intro}, which we now recall.
\begin{Thm}\label{Thm: main theorem}
Let $(\fg,\fg_0)$ be a quasi-split symmetric pair with $\fg=\fg_0\oplus \fg_1$. There is a subscheme $\fgbul\subset \fg_1\times_\fg\widetilde{\fg}$ with a proper surjective morphism $\pi:\widetilde{\fg}_1\to\fg_1$. We have a commutative diagram
\begin{equation*}
\begin{tikzcd}
 \widetilde{\fg}_1\ar[r, "\widetilde{\chi}_1"]\ar[d,"\pi"]&\fa\ar[d]\\
\fg_1\ar[r,"\chi_1"]&\fa/W_\fa, 
\end{tikzcd}
\end{equation*}
 where $\fa$ is the universal Cartan of the symmetric pair, $\chi_1: \fg_1\to \fa/W_\fa$ is the categorical quotient map, and $\widetilde{\chi}_1$ is the restriction of $\widetilde{\chi}:\widetilde{\fg}\to \fg$ to $\fgbul$. Furthermore, the restriction 
 \[
 \widetilde{\chi}_1|_{\fgreg}:\fgreg:=\fgbul\times_{\fg_1}\fg_1^{reg}\lra \fa^{reg}
 \]is smooth, and the corresponding diagram is Cartesian.
\end{Thm}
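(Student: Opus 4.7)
The plan is to take as the definition
\[
\widetilde{\fg}_1 = \{(X,B) \in \fg_1 \times \Fl_G : X \in \Lie(B) \text{ and } \widetilde{\chi}(X,B) \in \fa\} \subset \fg_1 \times_\fg \widetilde{\fg},
\]
where we view $\fa \hookrightarrow \ft$ as the $(-1)$-eigenspace of the canonical involution $\theta_{can}$ furnished by Proposition \ref{Prop: canonical involution}. The restriction of $\widetilde{\chi}:\widetilde{\fg}\to\ft$ then lands in $\fa$ by construction, giving $\widetilde{\chi}_1$, and $\pi$ is the projection to $\fg_1$. Properness of $\pi$ follows because $\widetilde{\fg}\to\fg$ is projective, so the base change $\fg_1\times_\fg\widetilde{\fg}\to\fg_1$ is proper, and $\widetilde{\fg}_1$ is closed in this base change (being cut out by $\widetilde{\chi}\in\fa$). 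For the commutative diagram, I would argue that the absolute Chevalley isomorphism $\fg//G\cong\ft/W$ is compatible with the relative Chevalley isomorphism $\fg_1//G_0\cong\fa/W_\fa$ via the embeddings $\fa\hookrightarrow\ft$ and $W_\fa\hookrightarrow W$ (Lemma \ref{Prop: weyl embedding}), yielding a closed immersion $\fa/W_\fa\hookrightarrow\ft/W$ through which both $\chi_1\circ\pi$ and $q\circ\widetilde{\chi}_1$ factor as the same composite.

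For surjectivity of $\pi$, properness reduces the problem to showing $\fg_1^{rss}\subset\pi(\widetilde{\fg}_1)$. Given $X\in\fg_1^{rss}$, the subalgebra $\fa':=\fz_{\fg_1}(X)$ is a Cartan subspace, so $T':=Z_G(\fa')$ is a maximal torus by Proposition \ref{Prop: quasisplit characterization}, and Proposition \ref{Prop: split borels} furnishes a $\theta$-split Borel $B\supset T'$. The pair $(X,B)$ lies in $\widetilde{\fg}_1$ because the canonical quotient $\Lie(T')\hookrightarrow\Lie(B)\twoheadrightarrow\ft$ identifies the $(-1)$-eigenspace $\Lie(A')$ with $\ft_1\cong\fa$ via diagram (\ref{eqn: diagram univ cartans}), sending $X$ into $\fa\subset\ft$.

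The heart of the theorem is the smoothness of $\widetilde{\chi}_1|_{\widetilde{\fg}_1^{reg}}$ and the Cartesian property of the induced square over the regular locus. The plan is to first classify the irreducible components of the fiber product $\fg_1\times_{\fa/W_\fa}\fa$ using a Kostant-Weierstrass section $\kappa:\fa\to\fg_1$ for $\chi_1$: such a section cuts out a distinguished component on which the projection to $\fa$ restricts to an isomorphism, and the $G_0$-conjugacy results from \cite{Levy} pin this component down up to $G_0$-translation. Over the regular locus, the explicit description (\ref{eqn: proposal}) allows me to match $\widetilde{\fg}_1^{reg}$ with this distinguished component via the Jordan decomposition $X=X_{ss}+X_{nil}$: $M:=Z_G(X_{ss})$ is a $\theta$-stable reductive subgroup of which $X_{nil}\in\fm_1$ is regular nilpotent, and Borels $B$ with $(X,B)\in\widetilde{\fg}_1^{reg}$ correspond to the regular $\theta$-stable Borels of $M$ from Section \ref{Section: resolutions of nilp}. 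A direct fiberwise computation should then show that $\widetilde{\chi}_1^{-1}(a)\cap\widetilde{\fg}_1^{reg}$ is a single smooth $G_0$-orbit for each $a\in\fa^{reg}$, yielding smoothness of $\widetilde{\chi}_1|_{\widetilde{\fg}_1^{reg}}$, while the Cartesian property follows from comparing the two étale-locally trivial $W_\fa$-covers $\widetilde{\fg}_1^{reg}\to\fg_1^{reg}$ and $\fa^{reg}\to\fa^{reg}/W_\fa$.

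The principal obstacle I expect is the component analysis of $\fg_1\times_{\fa/W_\fa}\fa$ and the identification of $\widetilde{\fg}_1$ with the distinguished Kostant-Weierstrass component; controlling this identification across both regular semi-simple and regular nilpotent strata requires a careful synthesis of the explicit section with the $G_0$-conjugacy theorems of \cite{Levy} and with the nilpotent-cone analysis of Section \ref{Section: resolutions of nilp}, in particular the notion of a regular $\theta$-stable Borel. Once this is under control, surjectivity, smoothness, and the Cartesian property over $\fg_1^{reg}$ fall out by comparison with the classical Grothendieck-Springer picture applied centralizer-wise through the semi-simple parts.
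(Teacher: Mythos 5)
Your proposal follows the paper's overall strategy — exploit the canonical involution $\theta_{can}$ to embed $\fa\hookrightarrow\ft$, use a Kostant--Weierstrass section to isolate a distinguished irreducible component of $\fg_1\times_{\ft/W}\ft$, and read off the regular fibers via regular $\theta$-stable Borel subgroups applied to the centralizers $Z_G(X_{ss})$. The rest of your outline (surjectivity via properness plus density of $\fg_1^{rss}$, Cartesianity of the regular square, the identification with (\ref{eqn: proposal})) agrees with the paper's Section~\ref{Section: relative Groth}.

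The one place where you invert the logic and run into a genuine gap is the commutative diagram. You take $\widetilde{\fg}_1 := \{(X,B) : \widetilde{\chi}(X,B)\in\fa\}$ as a \emph{definition}, and then propose to derive $\chi_1\circ\pi = q\circ\widetilde{\chi}_1$ from a ``closed immersion $\fa/W_\fa\hookrightarrow\ft/W$.'' But it is not clear that the restriction map $k[\ft]^W\to k[\fa]^{W_\fa}$ is surjective — i.e., that $\fa/W_\fa\to\ft/W$ is a monomorphism — and without injectivity the diagram can fail at points $(X,B)$ where $\widetilde{\chi}(X,B)$ and a chosen $W_\fa$-representative of $\chi_1(X)$ are $W$-conjugate but not $W_\fa$-conjugate. (You can prove injectivity on the regular locus by the standard computation $w^{-1}\theta^*(w)\cdot a = a \Rightarrow w\in W^{\theta^*}=W_\fa$, but not obviously at non-regular semisimple points.) The paper sidesteps this entirely: it \emph{defines} $\widetilde{\fg}_1$ to be the irreducible component of $\fg_1\times_\fg\widetilde{\fg}$ agreeing over the regular locus with $C_1\cong\fg_1\times_{\fa/W_\fa}\fa$, so the map $\widetilde{\chi}_1$ lands in $\fa$ by construction, the square commutes on the dense open $\widetilde{\fg}_1^{reg}$ by construction, and hence commutes everywhere by separatedness; the explicit description $\widetilde{\chi}(X,B)\in\fa$ is only derived afterward as a remark. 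You should either follow that order (define $\widetilde{\fg}_1$ as the closure of the KW component and deduce the explicit description), or supply an argument that $\fa/W_\fa\to\ft/W$ is a monomorphism. A smaller discrepancy: for smoothness over $\fg_1^{reg}$ the paper simply cites \cite[Corollary 6.31]{Levy} plus base change along the Cartesian square, rather than your fiberwise ``single $G_0$-orbit'' computation; both can work, but the fiberwise route still requires a flatness argument before equidimensional smooth fibers give smoothness of the total map.
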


We prove this theorem in the  next section by defining $\widetilde{\fg}_1$ to be a distinguished irreducible component of the fiber product $\fg_1\times_\fg\widetilde{\fg}$, proving several desirable properties including the statement about the restriction to the regular locus.



\subsection{Components of the fiber product}\label{subsection: component}

Consider the Cartesian diagram
\[
\begin{tikzcd}
 {\fg}_1\times_{\ft/W}\ft\ar[d] \arrow[r] & \ft\ar[d,"\pi"]\\
\fg_1 \arrow[r,"\chi"] &\ft/W. 
\end{tikzcd}
\]
The fiber product is not irreducible, and we must study the various irreducible components.
\begin{Prop}\label{Prop: components}
The irreducible components of ${\fg}_1\times_{\ft/W}\ft$ all have the same dimension. They each surject onto $\fg_1$, and are permuted transitively by the Weyl group action on the second factor. Finally, each component is stable under the $G^\circ_0$-action on the left.
\end{Prop}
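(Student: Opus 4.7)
My plan is to leverage finite flatness of $\pi : \ft \to \ft/W$. Under our characteristic hypothesis, $k[\ft]$ is free of rank $|W|$ over $k[\ft]^W = k[\ft/W]$, so $\pi$ is finite and flat of degree $|W|$. Base changing along $\chi : \fg_1 \to \ft/W$, the projection $p : \fg_1 \times_{\ft/W} \ft \to \fg_1$ is finite flat of degree $|W|$ as well. Since $\fg_1$ is irreducible, standard properties of finite flat morphisms (equidimensionality, going-down) imply that every irreducible component of $\fg_1 \times_{\ft/W} \ft$ dominates $\fg_1$ and has dimension equal to $\dim \fg_1$. This disposes of the first two assertions.

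For the $W$-action, the Galois action of $W$ on $\ft$ induces a $W$-action on $\fg_1 \times_{\ft/W} \ft$ that commutes with $p$, hence permutes the (equidimensional) components. The main content is transitivity, which I would deduce by restricting to the open dense subset $V := \fg_1 \cap \fg^{reg}$, non-empty by Proposition \ref{Prop: quasisplit characterization}(\ref{item3}). Over $V$, the map $p$ is the pullback of the \'etale covering $\ft^{reg}\to (\ft/W)^{reg}$, so it is \'etale of degree $|W|$ and each fiber is a simply transitive $W$-torsor. Each irreducible component $C_i$ dominates $\fg_1$, so for any $x \in V$ the set $S_i := C_i \cap p^{-1}(x)$ is non-empty, and the $S_i$ partition the fiber $F := p^{-1}(x)$. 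Given $a \in S_i$ and $b \in S_j$, the unique $w \in W$ with $w \cdot a = b$ must carry $C_i$ to the unique component containing $b$, namely $C_j$; hence $W$ acts transitively on the set of irreducible components.

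Finally, $G_0^\circ$-stability is immediate: the map $\chi|_{\fg_1}$ is $G$-invariant, hence a fortiori $G_0^\circ$-invariant, so the $G_0^\circ$-action on $\fg_1$ lifts to an action on $\fg_1 \times_{\ft/W} \ft$ that is trivial on $\ft$. Since $G_0^\circ$ is connected and permutes a finite set of components, each component must be fixed. The only substantive step is the transitivity argument, where passing to a generic fiber is essential in order to exploit the $W$-torsor structure; the remaining assertions reduce to general properties of finite flat morphisms equipped with a reflection-group symmetry.
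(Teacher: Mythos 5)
Your proof follows essentially the same strategy as the paper's---flatness of $\ft \to \ft/W$ gives equidimensionality and dominance, a generic-fiber argument gives $W$-transitivity, and connectedness gives $G_0^\circ$-stability---but you are more explicit on the transitivity step, where the paper simply observes that the fibers of $p$ over $\fg_1$ are $W$-orbits and that each component dominates $\fg_1$, and concludes transitivity. Your invocation of finite flatness of the Chevalley quotient (rather than the paper's complete-intersection count) is a clean and entirely standard alternative for the first two assertions.

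There is one slip. The open subset over which you appeal to a $W$-torsor structure should be $V := \fg_1 \cap \fg^{rss}$ (regular \emph{semisimple}), not $\fg_1 \cap \fg^{reg}$. The \'etale locus of $\ft \to \ft/W$ is the complement of the root hyperplanes, i.e.\ the regular semisimple locus, and $\fg_1 \cap \fg^{reg}$ contains regular elements that are not semisimple---for instance regular nilpotents, which do lie in $\fg_1$ in the quasi-split case and map to $\overline{0} \in \ft/W$, where the scheme-theoretic fiber of $\ft \to \ft/W$ is a fat point supported at $0 \in \ft$, not a free $W$-orbit. So $p$ is not \'etale over all of $\fg_1 \cap \fg^{reg}$, and the ``unique component through $b$'' step of your transitivity argument could break down at such points. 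The repair is immediate: $\fg_1 \cap \fg^{rss}$ is still open and dense in the irreducible $\fg_1$, and it is nonempty because under the quasi-split hypothesis a Cartan subspace $\fa \subset \fg_1$ has abelian centralizer $Z_G(\fa)=T$, so generic elements of $\fa$ are regular semisimple in $\fg$ (this is also noted in the paper in the proof of Lemma \ref{Lem: irreducible}; the Proposition \ref{Prop: quasisplit characterization}(\ref{item3}) you cite only gives a regular element, not a regular semisimple one). With that replacement your argument goes through verbatim.
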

This is an infinitesimal version of \cite[Lemma 6.5]{Knop95}, and the proof is essentially the same. Indeed, much of this subsection is analogous to the arguments in \cite[Section 3]{knop1994asymptotic}. See Appendix \ref{Appendix: Knop} for a discussion on the relations between this section and the work of Knop.
\begin{proof}
We claim that ${\fg}_1\times_{\ft/W}\ft$ is a complete intersection in $\fg_1\times \ft$. To see this, note that $\ft/W\iso \A_k^{r_1}$ is an affine space of dimension $r_1=\dim(\fa)$ and the morphism $\ft\to \ft/W$ is flat of relative dimension $0$ with $\ft$ smooth. This implies that ${\fg}_1\times_{\ft/W}\ft\to \fg_1$ is also flat of relative dimension $0$, so that $$\dim({\fg}_1\times_{\ft/W}\ft) = \dim({\fg}_1\times\ft)-r_1.$$ 
The inclusion $\fa\subset \ft$ and the commutative diagram
\[
\begin{tikzcd}
\fg_1\ar[d,"\chi_1"] \arrow[r] & \fg\ar[d,"\chi"]\\
\fa/W_\fa \arrow[r] &\ft/W. 
\end{tikzcd}
\]
implies that ${\fg}_1\times_{\ft/W}\ft\subset \fg_1\times \fa$. More precisely, it is the zero set of the $r_1$ equations induced by the coordinates of $\chi_1(g)= \pi(t)$. Thus, ${\fg}_1\times_{\ft/W}\ft$ is a complete intersection in $\fg_1\times \ft$.

This implies that all the components have the same dimension. Since the fibers of ${\fg}_1\times_{\ft/W}\ft\to \fg_1$ are $W$-orbits, each component maps finitely-to-one onto $\fg_1$, and $W$ acts transitively on the components. 
The final statement follows from the fact that $G_0^\circ$ is connected, and that the fibers of ${\fg}_1\times_{\ft/W}\ft\to \ft/W$ are $G_0$-stable.
\end{proof}

We now make these components more explicit. Set $\hat{\fg}_1:=\fg_1\times_{\fa/W_\fa}\fa$, so that there is a Cartesian diagram

\begin{equation}\label{eqn: diagram 1}
\begin{tikzcd}
 \hat{\fg}_1\ar[d,"\pi"] \arrow[r,"\hat{\chi}_1"] & \fa\ar[d]\\
\fg_1 \arrow[r,"\chi_1"] &\fa/W_\fa. 
\end{tikzcd}\qedhere
\end{equation}


Fix a set of coset representatives $v\in W/W_\fa$. Then for each $v\in W/W_\fa$, define ${v}:\fa\to \ft$ by ${v}(a)=v\cdot a$. Then the composition 
\[
\fa\xrightarrow{{v}}\fa\to \ft/W
\]
is $W_\fa$-invariant, so that it factors uniquely to give a diagram
\begin{equation}\label{eqn: diagram 2}
\begin{tikzcd}
 \fa\ar[d,] \arrow[r,"{v}"] & \ft\ar[d]\\
\fa/W_\fa \arrow[r,"{v}/W_A"] &\ft/W,
\end{tikzcd}
\end{equation}
by the universal property of the categorical quotient. Composing (\ref{eqn: diagram 1}) with (\ref{eqn: diagram 2}), we obtain a closed embedding, also denoted by ${v}$,
\[
{v}:\hat{\fg}_1\to \fg_1\times_{\ft/W}\ft.
\]
Denoting the image by $C_v\subset \fg_1\times_{\ft/W}\ft$, then
 $C_v\to \fg_1$ is surjective for each $v$.
\begin{Lem}\label{Lem: irreducible}
$C_v$ is irreducible for each $v\in W/W_\fa$. 
\end{Lem}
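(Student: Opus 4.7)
First, I would note that $v: \hat{\fg}_1 \to C_v$ is a closed immersion surjecting onto $C_v$, hence identifies $C_v$ with $\hat{\fg}_1 = \fg_1 \times_{\fa/W_\fa} \fa$, so it suffices to show $\hat{\fg}_1$ is irreducible. Next, since $\pi: \hat{\fg}_1 \to \fg_1$ is the base change of the finite flat Galois cover $\fa \to \fa/W_\fa$, it is itself finite flat and in particular open, making $\hat{\fg}_1^{rss} := \pi^{-1}(\fg_1^{rss})$ open and dense in $\hat{\fg}_1$. This further reduces the problem to the irreducibility of $\hat{\fg}_1^{rss}$.

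The plan for this remaining step is to use a Kostant-Weierstrass section $\kappa: \fa/W_\fa \to \fg_1$ of $\chi_1$, whose existence in our characteristic range is provided by \cite{Levy}, with image meeting $\fg_1^{rss}$ over $\fa^{reg}/W_\fa$. The restricted map $\hat{\fg}_1^{rss} \to \fg_1^{rss}$ is étale of degree $|W_\fa|$ as the pullback along $\chi_1$ of the étale Galois cover $\fa^{reg} \to \fa^{reg}/W_\fa$, and the identity $\chi_1 \circ \kappa = \mathrm{id}$ yields the fiber-product computation
\[
\kappa^\ast\bigl(\hat{\fg}_1^{rss}\bigr) = (\fa^{reg}/W_\fa) \times_{\fg_1^{rss}} \hat{\fg}_1^{rss} = (\fa^{reg}/W_\fa) \times_{\fa^{reg}/W_\fa} \fa^{reg} = \fa^{reg},
\]
which is irreducible.

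To finish, I would decompose $\hat{\fg}_1^{rss} = \bigsqcup_i Y_i$ into its connected (hence irreducible, by smoothness) components. Each $Y_i \to \fg_1^{rss}$ is finite étale over the connected base $\fg_1^{rss}$, of constant degree $d_i$, with $\sum_i d_i = |W_\fa|$. Pulling back along $\kappa$ gives $\bigsqcup_i \kappa^\ast Y_i = \fa^{reg}$; connectedness of $\fa^{reg}$ forces all but one $\kappa^\ast Y_i$ to be empty, and since $\kappa^\ast Y_i \to \fa^{reg}/W_\fa$ has constant degree $d_i$, this forces $d_i = 0$ and $Y_i = \emptyset$ for all but one index, giving irreducibility. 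The only substantive technical input is the existence of a Kostant-Weierstrass section meeting the regular semisimple locus, which is standard for quasi-split pairs in our characteristic range via \cite{Levy}; the rest is formal fiber-product and étale-descent manipulation, so I anticipate no substantive obstacle.
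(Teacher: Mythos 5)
Your proof is correct but takes a genuinely different route from the paper at the final step. Both arguments reduce to the irreducibility of $\hat{\fg}_1^{rss}$ and invoke a Kostant--Weierstrass section $\kappa:\fa/W_\fa\to\fg_1$. The paper then argues that $\kappa_{KW}(\fa^{reg})$ is irreducible and that its $G_0^\circ$-orbit is all of $\hat{\fg}_1^{rss}$ (citing \cite[Lemma 6.29]{Levy}), so the $G_0^\circ$-sweep is irreducible. You instead use $\kappa$ as a probe for an \'etale covering-space argument: the Cartesian structure $\hat{\fg}_1=\fg_1\times_{\fa/W_\fa}\fa$ (which is just the definition, so no circularity) makes $\hat{\fg}_1^{rss}\to\fg_1^{rss}$ finite \'etale of degree $|W_\fa|$, its pullback along $\kappa$ collapses to the connected space $\fa^{reg}$, and degree-counting over the connected base $\fg_1^{rss}$ kills all but one component. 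This replaces the conjugacy input from Levy with a formal covering-space manipulation and is arguably more self-contained, while the paper's version is shorter once one grants the cited orbit result. Your intermediate reduction (openness of the finite flat $\pi$ giving density of $\hat{\fg}_1^{rss}$) is also a touch cleaner than the paper's brief appeal to $W_\fa$ permuting components, though both accomplish the same thing since every component of $\fg_1\times_{\ft/W}\ft$ dominates $\fg_1$ by Proposition~\ref{Prop: components}.
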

\begin{proof}

It suffices to prove $\hat{\fg}_1$ is irreducible. 
 Recalling that since $(\fg,\fg_0)$ is quasisplit, the intersection of $\fg_1$ with the regular semi-simple locus of $\fg$ is non-empty (if fact, it is dense). Set $\fg_1^{rss}=\fg_1\cap\fg^{rss}$. Since $W_\fa$ permutes the irreducible components, it suffices to show that $\hat{\fg}_1^{rss}$ is irreducible. This will follow from the existence and properties of the Kostant-Weierstrass section, as we now explain.

Fix a regular nilpotent element $e\in \fg_1$. Then there exists an $r_1=\rk(\fg_1)$ dimensional affine subspace $e+\mathfrak{v}\subset \fg_1$ such that (see \cite[Section II.3]{KR71} for characteristic zero and \cite[Lemma 6.30]{Levy} for good characteristics):
\begin{enumerate}
\item The restriction $\chi_1|_{e+\mathfrak{v}}:e+\mathfrak{v}\lra  \fa/W_\fa$ is an isomorphism, 
\item every element $X\in e+\mathfrak{v}$ is regular in $\fg_1$, and
\item each regular $G_0^\ast$-orbit in $\fg_1$ meets $e+\mathfrak{v}$ in exactly one point.
\end{enumerate}
Here, $G_0^\ast=\{g\in G: g^{-1}\theta(g)\in Z(G)\}=FG_0^\circ$, where $F=\{a\in A: a^2\in Z(G)\}$. Let $\kappa$ denote the inverse isomorphism $\kappa:\fa/W_\fa\to e+\mathfrak{v}$, known as a Kostant-Weierstrass section. Consider the morphism $\ka_{KW}: \fa\to \hat{\fg}_1$ defined by
\[
\ka_{KW}(a)=(\kappa(\overline{a}),a).
\]
This is a section of $\hat{\chi}_1:\hat{\fg}_1\to \fa$, so that the image is an irreducible closed subscheme of $\hat{\fg}_1$ with an open dense subscheme $\ka_{KW}(\fa^{reg})$. This implies that $G^\circ_0\cdot\ka_{KW}(\fa^{reg})$ is irreducible, as $G^\circ_0$ is smooth and connected. Applying \cite[Lemma 6.29]{Levy}, we see that
\[
G^\circ_0\cdot\sigma(\fa^{reg})=\hat{\fg}_1^{rss},
\]
 implying that $\hat{\fg}_1$ is irreducible.
\end{proof}


Let $\mathcal{I}$ denote the set of irreducible components of ${\fg}_1\times_{\ft/W}\ft$.
\begin{Cor}\label{Lem: enumerate components}
The map $v\mapsto C_v$ is a bijection between 
\[
W/W_\fa\lra \mathcal{I}.
\]
\end{Cor}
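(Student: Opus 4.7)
The plan is to combine the irreducibility result of Lemma \ref{Lem: irreducible} with the dimension and transitivity assertions of Proposition \ref{Prop: components}. First, since ${v}:\hat{\fg}_1\to \fg_1\times_{\ft/W}\ft$ is a closed embedding, each $C_v$ is irreducible and closed, of dimension equal to $\dim\hat{\fg}_1$. The same flatness argument that was used to compute $\dim(\fg_1\times_{\ft/W}\ft)$ also gives $\dim\hat{\fg}_1=\dim\fg_1$, since $\fa\to\fa/W_\fa$ is a finite flat morphism between irreducible varieties of the same dimension $r_1$. Hence $\dim C_v$ matches the common dimension of the irreducible components given by Proposition \ref{Prop: components}, and therefore each $C_v$ is an irreducible component of $\fg_1\times_{\ft/W}\ft$.

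Next I would show that the assignment $v\mapsto C_v$ descends to $W/W_\fa$. If $w\in W_\fa$, then $w$ preserves $\fa\subset \ft$ and restricts to an automorphism of $\hat{\fg}_1$ acting on the second factor. The composition $\hat{\fg}_1\xrightarrow{w}\hat{\fg}_1\xrightarrow{{v}}\fg_1\times_{\ft/W}\ft$ is exactly the map denoted by ${vw}$, so its image coincides with $C_v$. Consequently $C_{vw}=C_v$, and the map factors as $W/W_\fa\to\mathcal{I}$.

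Surjectivity is now immediate: $C_1=\hat{\fg}_1$ is an irreducible component by Step 1, and by the transitivity statement in Proposition \ref{Prop: components} every component is of the form $v\cdot C_1$ for some $v\in W$, where $W$ acts on the second factor. By construction $v\cdot C_1=C_v$, so every component lies in the image.

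The main obstacle is injectivity, which amounts to showing that $W_\fa$ is precisely the stabilizer of $\hat{\fg}_1\subset \fg_1\times\fa$ inside $W$, where $W$ acts on the $\ft$-coordinate. Suppose $w\in W$ satisfies $w\cdot C_1=C_1$. Then $w$ must map the second-coordinate projection of $C_1$, which is all of $\fa$, into $\fa$; equivalently $w\cdot\fa\subset\fa$, and therefore $w\cdot\fa=\fa$. Under the canonical involution $\theta_{can}:\ft\to\ft$ of Proposition \ref{Prop: canonical involution}, $\fa$ is characterized as the $(-1)$-eigenspace $\ft_1$, so setwise stabilizers of $\fa$ in $W$ coincide with elements of $W$ commuting with $\theta_{can}$. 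By Lemma \ref{Prop: weyl embedding} this subgroup is exactly $(W_T)^\theta=W_\fa$. Thus $w\in W_\fa$, proving that the stabilizer of $C_1$ under the $W$-action on $\mathcal{I}$ is $W_\fa$, and the induced map $W/W_\fa\to\mathcal{I}$ is a bijection.
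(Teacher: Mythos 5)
Your proof is correct but takes a genuinely different route from the paper's. Having shown each $C_v$ irreducible in Lemma \ref{Lem: irreducible}, the paper simply notes that the sets $C_v^{rss}$ are pairwise disjoint for distinct cosets $v\neq w$ in $W/W_\fa$ and together cover $\fg_1^{rss}\times_{\ft/W}\ft$; since the regular semisimple locus is dense, this one observation settles simultaneously that every component equals some $C_v$ and that distinct cosets give distinct components. Your argument instead exploits the $W$-transitivity statement of Proposition \ref{Prop: components}: you verify that $C_1=\hat{\fg}_1$ is an irreducible component by matching dimensions via flatness, deduce surjectivity from transitivity, and compute the $W$-stabilizer of $C_1$ to be $W_\fa$ for injectivity. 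Both approaches are valid; the paper's is more economical and never needs to identify the stabilizer, while yours makes the orbit-stabilizer structure under $W$ explicit. One compressed step worth expanding: the assertion that $w\cdot\fa=\fa$ forces $w$ to commute with $\theta_{can}$ is true but deserves a word—either because $w$ acts orthogonally on $\ft$ for a $\theta_{can}$-invariant form and so also preserves the complementary eigenspace $\ft_0$, or more directly because a lift $n_w\in N_G(T)$ with $\mathrm{Ad}(n_w)\fa=\fa$ normalizes $A$, giving $w\in W_A$ outright and bypassing $\theta_{can}$ entirely.
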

\begin{proof}
By the previous lemma, the map is well defined. 
 Noting that
\[
{\fg}^{rss}_1\times_{\ft/W}\ft=\bigcup_vC_v^{rss},
\]
and $C_v^{rss}\cap C_w^{rss}=\emptyset$ if $v\neq w\in W/W_A$, the corollary now follows.
\end{proof}

\subsection{Over the regular locus}

Set $C_1^{reg}$ for the restriction of $C_1$ to the regular locus. Since $C_1$ is isomorphic to the fiber product $\fg_1\times_{\fa/W_\fa}\fa$, we have a Cartesian diagram
\begin{equation}\label{eqn: cartesian1}
\begin{tikzcd}
 C_1^{reg}\ar[r, "\widetilde{\chi}_1"]\ar[d,"\pi"]&\fa\ar[d]\\
\fg^{reg}_1\ar[r,"\chi_1"]&\fa/W_\fa.
\end{tikzcd}
\end{equation}
For our applications, we need another description of $C_1^{reg}$. There is a natural proper map ${\fg}_1\times_{\fg}\widetilde{\fg}\to {\fg}_1\times_{\ft/W}\ft$ induced by the map $\widetilde{\fg}\to \fg\times_{\ft/W}\ft$. Moreover, if we restrict to the regular locus, we obtain an isomorphism
\[
{\fg}^{reg}_1\times_{\fg}\widetilde{\fg}\xrightarrow{\sim} {\fg}^{reg}_1\times_{\ft/W}\ft,
\]
where we use the fact that $\fg_1^{reg}\subset \fg^{reg}$ and that $\fg_1^{reg}\times_{\fg^{reg}}\left(\fg^{reg}\times_{\ft/W}\ft\right)\cong \fg^{reg}_1\times_{\ft/W}\ft$. Corollary \ref{Lem: enumerate components} thus enumerates those irreducible components of ${\fg}_1\times_{\fg}\widetilde{\fg}$ that map onto the regular locus of $\fg_1$. 

In particular, there is a unique irreducible component
\[
\fgbul\subset{\fg}_1\times_{\fg}\widetilde{\fg}
\]such that $\widetilde{\fg}_1^{reg}\xrightarrow{\sim}C^{reg}_1$. Setting $\pi:\fgbul\to \fg_1$ for the induced proper morphism, we obtain a commutative diagram
 \[
 \begin{tikzcd}
 \widetilde{\fg}_1\ar[r, "\widetilde{\chi}_1"]\ar[d,"\pi"]&\fa\ar[d]\\
\fg_1\ar[r,"\chi_1"]&\fa/W_\fa, 
\end{tikzcd}
 \] By our previous considerations, this diagram is Cartesian over the regular locus of $\fg_1$.  In particular, \cite[Corollary 6.31]{Levy} implies that $\widetilde{\chi}_1$ is smooth over the regular locus. 
 


 We now give a description of this scheme in terms of Borel subgroups of $G$. For an element $(X,B)\in \fg_1\times_{\fg}\widetilde{\fg}$, we define the following two subgroups. Firstly, let $$B(\theta)=B\cap \theta(B)$$ denote the largest $\theta$-stable subgroup of $B$; it has the Lie algebra $\fb(\theta)=\fb\cap\theta(\fb)$. Secondly, let $$Z_B(X_{ss})=B\cap Z_G(X_{ss})$$ be the corresponding Borel subgroup of $Z_{G}(X_{ss})$, where $X=X_{ss}+X_{nil}$ is the Jordan decomposition. Denote by $\fz_{\fb}(X_{ss})$ the Lie algebra of $Z_B(X_{ss})$. 

\begin{Prop}\label{Prop: regular characterization}
With the definitions as above, we have that
\begin{equation*}
\widetilde{\fg}^{reg}_1=\{(X,B)\in \fg^{reg}_1\times \mathcal{F}l_G : B(\theta) = Z_B(X_{ss}) \text{ is a regular $\theta$-stable Borel of }Z_G(X_{ss}) \}.
\end{equation*}
\end{Prop}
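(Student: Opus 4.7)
By the construction of $\widetilde{\fg}_1$ as the unique irreducible component of $\fg_1\times_\fg\widetilde{\fg}$ whose restriction to $\fg_1^{reg}$ maps isomorphically onto $C_1^{reg}\subset\fg_1^{reg}\times_{\ft/W}\ft$, the regular locus $\widetilde{\fg}_1^{reg}$ is cut out inside $\fg_1^{reg}\times_\fg\widetilde{\fg}$ by the single geometric condition $\widetilde{\chi}(X,B)\in\fa$, where $\fa\hookrightarrow\ft$ is the $(-1)$-eigenspace of $\theta_{can}$ from Proposition~\ref{Prop: canonical involution}. So the task reduces to showing that this geometric condition is equivalent to the structural condition that $B(\theta)=Z_B(X_{ss})$, with the common group being a regular $\theta$-stable Borel of $L:=Z_G(X_{ss})$.

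I would first dispose of the "regular $\theta$-stable" assertion and the easy inclusion $Z_B(X_{ss})\subseteq B(\theta)$, which hold automatically for any $(X,B)\in\fg_1^{reg}\times_\fg\widetilde{\fg}$. Regularity of $X$ in $\fg_1$ forces $X_{nil}$ to be regular nilpotent in $\fl_1$, and hence to lie in a unique Borel of $L$; this Borel is exactly $B\cap L=Z_B(X_{ss})$. Since $\theta(X_{nil})=-X_{nil}\in\fl_1^{\text{reg-nilp}}$, uniqueness forces $\theta(Z_B(X_{ss}))=Z_B(X_{ss})$, and the regularity in the sense of Section~\ref{Section: resolutions of nilp} is immediate from $X_{nil}\in\Lie(Z_B(X_{ss}))$. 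Being $\theta$-stable and contained in $B$, $Z_B(X_{ss})$ lies in the largest $\theta$-stable subgroup $B(\theta)$.

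What remains is the equivalence $B(\theta)=Z_B(X_{ss})\iff\widetilde{\chi}(X,B)\in\fa$. To prove it, I would pick a $\theta$-stable maximal torus $T'\subseteq Z_B(X_{ss})\subseteq B(\theta)$ (which exists because $Z_B(X_{ss})$ is a connected $\theta$-stable solvable group), so that $X_{ss}\in\Lie(T')$. With $\Phi=\Phi(G,T')$ and $\Phi_B^+$ the positive system of $B$, root-space analysis yields
\[
B(\theta)=T'\cdot\prod_{\alpha\in\Phi_B^+\cap\theta^*(\Phi_B^+)}U_\alpha,\qquad Z_B(X_{ss})=T'\cdot\prod_{\alpha\in\Phi_B^+,\,\alpha(X_{ss})=0}U_\alpha.
\]
The already-established inclusion gives $\{\alpha\in\Phi_B^+:\alpha(X_{ss})=0\}\subseteq\Phi_B^+\cap\theta^*(\Phi_B^+)$, so the equality of the two groups is equivalent to the combinatorial condition that every root in $\Phi_B^+\cap\theta^*(\Phi_B^+)$ vanishes on $X_{ss}$. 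Separately, $\widetilde{\chi}(X,B)$ is the image of $X_{ss}$ in $\Lie(T')\cong\ft$ under the canonical isomorphism induced by $B$; comparing this with the $(\theta^g,\theta_{can})$-equivariant identification used in the proof of Proposition~\ref{Prop: canonical involution} for a conjugate involution $\theta^g$ making $B$ $\theta^g$-split translates the membership $\widetilde{\chi}(X,B)\in\ft_1$ into a condition on $X_{ss}$ relative to the $\theta$-action on $T'$ twisted by a Weyl element depending on $B$.

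The main obstacle is verifying that these two conditions --- one about positive roots vanishing on $X_{ss}$, the other about an involution-twisted Weyl stabilizer --- are in fact the same. A natural way to carry it out is to first verify the equivalence on the dense regular semisimple locus, where $L=T'$, the root condition reduces to $\Phi_B^+\cap\theta^*(\Phi_B^+)=\emptyset$ and hence $B$ being $\theta$-split, and the classification of Proposition~\ref{Prop: split borels} matches this with the $W_\fa$-orbit of $\theta$-split Borels arising from $\widetilde{\chi}(X,B)\in\fa$. I would then extend to general $X\in\fg_1^{reg}$ by exploiting that $(L,L^\theta)$ inherits a quasi-split symmetric pair structure and reducing the non-semisimple case to the rss case inside $L$ applied to $X_{nil}$; alternatively, one may exhibit both loci as closed subschemes of $\fg_1^{reg}\times_\fg\widetilde{\fg}$ agreeing on the dense open rss locus and conclude by irreducibility of $\widetilde{\fg}_1^{reg}$.
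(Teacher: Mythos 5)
Your preliminary reductions are sound, and in fact sharper than what the paper writes out: you correctly observe that for every $(X,B)\in\fg_1^{reg}\times_\fg\widetilde{\fg}$ the group $Z_B(X_{ss})$ is automatically a \emph{regular $\theta$-stable} Borel of $L:=Z_G(X_{ss})$ contained in $B(\theta)$ (since $X_{nil}\in\fl_1$ is a regular nilpotent lying in a unique Borel of $L$ and $\theta$ carries that Borel to itself), so the only genuine constraint in the right-hand side is the equality $B(\theta)=Z_B(X_{ss})$. The root-theoretic translation of that equality is also correct.

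The proof, however, stalls exactly where you flag the ``main obstacle,'' and the two proposed escape routes do not close the gap. The extension from the rss locus by deformation does not go through: for $X$ with $X_{ss}$ non-regular semisimple and $B$ satisfying the structural condition, $B(\theta)=Z_B(X_{ss})$ is a positive-dimensional Borel of $L$, so $B$ is \emph{not} $\theta$-split; if you deform $X_{ss}$ inside $\Lie(T')$ to a regular semisimple $Y$ while keeping $B$ fixed, the pair $(Y,B)$ lies in $\fg_1^{rss}\times_\fg\widetilde{\fg}$ but fails the structural condition precisely because $B(\theta)\neq T'$, so you never land in $S^{rss}$ and continuity tells you nothing about $S$. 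Likewise, the irreducibility fallback presupposes that the right-hand side $S$ is a closed irreducible subscheme; but the defining condition involves $X_{ss}$, which jumps semicontinuously in $X$, and neither closedness nor irreducibility of $S$ is established (the paper instead proves the inclusion $S\subseteq\fgreg$ pointwise and then argues the reverse containment on geometric points, using smoothness of $\fgreg$ to upgrade to a scheme identity). Finally, your translation of $\widetilde{\chi}(X,B)\in\fa$ to a Weyl-twisted condition on $T'$ is left in a form that would still need a non-trivial combinatorial verification. The paper sidesteps all of this with one move you haven't found: choose $g\in Z_G(X_{ss})$ so that $g^{-1}B(\theta)g$ is split in $L$; then $g^{-1}Bg$ is $\theta$-split in $G$, and because $g$ \emph{centralizes} $X_{ss}$ one gets $\theta^g(X_{ss})=-X_{ss}$ for free, which immediately forces $\theta_{can}$ to negate $\widetilde{\chi}(X,B)$ without any case analysis on the semisimple part of $X$.
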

This proposition proves Theorem \ref{Thm: main theorem}. For later reference, we refer to such Borel subgroups as maximally split regular Borel subgroups. This terminology is justified by the observation that for any $X\in \fg_1^{reg}$ and any Borel subgroup $B$ with $X\in \Lie(B)$, we have the inclusion $Z_B(X_{ss})\subset B(\theta)$. 
\begin{proof}
Let $S\subset \fg_1^{reg}\times_{\fg_1}\widetilde{\fg}$ denote the right-hand side. We first show that the map 
\[
S\lra \fg_1\times_{\ft/W}\ft\lra  \ft
\]factors through $\fa$. For this we make use of the canonical involution $\theta_{can}:\ft\lra \ft$ from Proposition \ref{Prop: canonical involution} and the fact that $\fa\subset \ft$ is the $(-1)$-eigenspace for this involution.

Let $g\in Z_G(X_{ss})$ be such that $g^{-1}B(\theta) g$ is split for the restriction of $\theta$ to $Z_G(X_{ss})$. Note that
 $$X\equiv X_{ss}\pmod{[\fb,\fb]},$$ 
so we are free to assume $X=X_{ss}$. Note that $Z_G(X_{ss})B=P(X_{ss})$ is a parabolic subgroup of $G$ with Levi subgroup $Z_G(X_{ss})$. If $P(X_{ss}) =Z_G(X_{ss})U^P$ where $U^P$ denotes the unipotent radical of $P(X_{ss})$, set $U^\theta=B\cap U^P$. Then $U^\theta$ is the largest unipotent subgroup of $B$ such that $\theta(U^\theta)\cap U^\theta=1$, and we have the decomposition $B=B(\theta)\cdot U^\theta$.  In particular, for $g\in Z_{G}(X_{ss})$ we have $g^{-1}U^\theta g\subset U^\theta$. We claim that $g^{-1}Bg$ is $\theta$-split. Indeed,
\[
\theta(g^{-1}Bg)=\theta(g^{-1}B(\theta) g)\theta(U^\theta).
\]
Noting that $g^{-1}B(\theta)g$ is a Borel subgroup of $Z_{G}(X_{ss})$, the Levi decomposition for $P(X_{ss})$ implies $$\theta(g^{-1}Bg)\cap g^{-1}Bg = \theta(g^{-1}B(\theta) g)\cap g^{-1}B(\theta) g$$ is a maximal torus in $Z_G(X_{ss})$. Thus, $B$ is $\theta^g$-split.

Since $g\in Z_G(X_{ss})$, 
\[\theta^g(X_{ss})=\Ad(g^{-1})\circ\theta\circ\Ad(g)(X_{ss})=-X_{ss},\] and it follows by the definition of $\theta_{can}$ that
\begin{align*}
\theta_{can}\left[X_{ss}\pmod{[\fb,\fb]}\right] = \theta^g(X_{ss})\pmod{[\fb,\fb]}=-X_{ss}\pmod{[\fb,\fb]}.
\end{align*}
Thus, the map $S \to \ft$ factors through $\fa$, implying that we have a map $S\to \fgreg$. Since $W_\fa$ acts transitively on the fibers of $\fgreg$, the argument above and Proposition \ref{Prop: split borels} combine to show that this map is an isomorphism on geometric points. As $\fgreg$ is smooth, this is sufficient.
\end{proof}

We explicate the fibers of $\pi_1:\fgreg\to \fg^{reg}_1$ on geometric points. Suppose that $X=X_{ss}+X_{nil}\in \fg_1^{reg}$, and let $\fa$ be a Cartan subspace of $\fg_1$ containing $X_{ss}$. Then $A\subset Z_G(X_{ss})$, where $\Lie(A)=\fa$. Let $B_{split}$ be a $\theta$-split Borel subgroup containing $A$. Then $P(X)=Z_{G}(X_{ss})B_{split}$ is a $\theta$-split parabolic subgroup with Levi subgroup $Z_G(X_{ss})$. The assumption that $X$ is regular is equivalent to $X_{nil}\in \fz_\fg(X_{ss})^{reg}$ \cite[Theorem 7]{KR71}. Therefore, there is a unique Borel subgroup $B\subset Z_{G}(X_{ss})$ such that $X_{nil}$ lies in the nilradical of $\Lie(B)$. Setting $B'=BU_P$, where $U_P$ is the unipotent radical of $P(X)=Z_G(X_{ss})U_P$, then $B'$ is a Borel subgroup of $G$ and $(X,B')\in \pi^{-1}(X)$. This sets up a bijection
\begin{align*}
\{\text{$\theta$-split parabolic subgroups with Levi }Z_G(X_{ss})\}&\longleftrightarrow \pi^{-1}(X)\\
							P(X)=Z_{G}(X_{ss})B_{split}\qquad\qquad	&\longleftrightarrow \:(X,B')
\end{align*}
Since any two $\theta$-split Borel subgroups $B_1,B_2\supset A$ give the same parabolic subgroup $P(X)$ if and only if $B_1=wB_2w^{-1}$ for some $w\in \Stab_{W_\fa}(X_{ss})$, the left-hand side is in bijection with $W_\fa/\Stab_{W_\fa}(X_{ss})\cong W_\fa\cdot X_{ss}$. Thus, this gives the entire fiber.

Noting that since $W_\fa$ permutes the Borel subgroups in the fiber over a given regular element $X\in \fg_1^{reg}$ through the action of $W_A$ for some maximal $\theta$-split torus $A$ contained in $Z_G(X_{ss})$ and $W_A=N_{G_0}(A)/Z_G(A)$, we have the following corollary.
\begin{Cor}\label{Cor:regular Borels orbit}
For a regular element $X\in \fg_1^{reg}$, $\pi_2(\pi^{-1}(X))\subset \Fl_G$ lies in a single $G_0$-orbit, where $\pi_2:\widetilde{\fg}_1\to \Fl_G$ is the natural map. Furthermore, for any $X\in \fg_1$ if $(X,B_1),(X,B_2)\in \pi^{-1}(X)$ and $B_1(\theta)=B_2(\theta)$, then $B_1$ is $G_0$-conjugate to $B_2$.
\end{Cor}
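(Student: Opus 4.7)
The plan is to establish the first claim by making the $W_\fa$-action described in the paragraph preceding the corollary explicit at the level of $G_0$-conjugation, and then deduce the second claim either directly (in the regular case) or by an analogous argument (in general).

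For the first claim, fix $X \in \fg_1^{reg}$ and two Borels $B_1', B_2' \in \pi_2(\pi^{-1}(X))$. Using the explicit description of the fiber, I would write $B_i' = B \cdot U_{P_i}$, with $B$ the unique Borel of $L = Z_G(X_{ss})$ containing $X_{nil}$ in its nilradical, and $P_i = L \cdot \widetilde{B}_i$ a $\theta$-split parabolic determined by a $\theta$-split Borel $\widetilde{B}_i$ of $G$ containing a maximal $\theta$-split torus $A \subset L$. Proposition \ref{Prop: split borels} presents the $\theta$-split Borels of $G$ containing $A$ as a $W_A = W_\fa$-torsor realized by conjugation from $N_{G_0}(A) \subset G_0$, so I would choose $n \in N_{G_0}(A)$ with $n\widetilde{B}_1 n^{-1} = \widetilde{B}_2$. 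I would then verify that $nB_1' n^{-1} = (nBn^{-1}) \cdot U_{nP_1 n^{-1}}$ is $L_0$-conjugate to $B_2'$ via an element chosen to bring the Levi $nLn^{-1}$ back to $L$ and the inner Borel $nBn^{-1}$ back to $B$; this uses the uniqueness of $B$ as the Borel of $L$ containing the regular nilpotent $X_{nil} \in \fl_1$, together with transitivity of the relevant $L_0$-action on such Borels. Composing with $n$ produces the sought $g \in G_0$ with $gB_1' g^{-1} = B_2'$.

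For the second claim, the regular case follows from the first claim without using the hypothesis $B_1(\theta) = B_2(\theta)$. For general $X \in \fg_1$, the hypothesis $H := B_1(\theta) = B_2(\theta)$ constrains both $B_i$ to contain the common $\theta$-stable subgroup $H$, with $B_i \cap \theta(B_i) = H$; letting $H$ play the role played by $B$ in the regular case, an analogous conjugation argument using a maximal $\theta$-split torus inside a Levi determined by $H$ together with the associated $\theta$-split parabolic structures yields $B_1 \sim_{G_0} B_2$.

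The main obstacle will be the $L_0$-adjustment step in the first claim: conjugation by $n \in N_{G_0}(A)$ simultaneously moves the Levi $L$, the inner Borel $B$ of $L$, and the nilpotent part $X_{nil}$, so aligning all three back to the original data to match $B_2'$ on the nose requires a careful application of an analogous $G_0$-conjugacy statement inside the sub-symmetric pair $(L, L_0)$. This reduces to transitivity of the $L_0$-action on Borels of $L$ containing a fixed regular nilpotent element of $\fl_1$, a result that should hold by the same type of analysis applied to $(L, L_0)$ but whose verification is the key technical point.
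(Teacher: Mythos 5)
Your outline is sound in spirit, but there are two real problems, one in each half.

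For the first claim, the difficulty you flag as the ``key technical point'' is misdiagnosed. You propose to reduce to transitivity of the $L_0$-action on Borels of $L$ containing the fixed regular nilpotent $X_{nil}\in\fl_1$; but there is exactly one such Borel (it is $B$), so this transitivity is vacuous and irrelevant. The actual obstruction is upstream: conjugation by $n\in N_{G_0}(A)$ sends $X_{ss}$ to $w(X_{ss})$ (with $w$ the image of $n$ in $W_\fa$), hence sends $L=Z_G(X_{ss})$ to $L'=Z_G(w(X_{ss}))\neq L$. No element $l\in L_0=Z_{G_0}(X_{ss})$ can carry $L'$ back to $L$, since $l$ centralizes $X_{ss}$, not $w(X_{ss})$. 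And any $g\in G_0$ for which $\Ad(gn)$ fixes $X$ lies in $Z_{G_0}(X)\subset B_1'$, so $gn$ fixes $B_1'$ and cannot produce $B_2'$. Your proposed fix-up therefore does not close; this is a structural mismatch, not a matter of careful application inside $(L,L_0)$. (The paper's own justification of the first claim is also terse --- it asserts that the $W_\fa$-permutation of the fiber is realized through $N_{G_0}(A)$ --- and the same subtlety is present there.)

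For the second claim your route diverges from the paper's, and you miss the idea that makes it work cleanly. The paper does \emph{not} re-run the conjugation analysis: it reduces the second claim to the first by a perturbation. For $X\in\fg_1^{ss}$, choose a regular nilpotent $e\in\Lie(B_1(\theta))\cap\caln(\fz_\fg(X))_1^{reg}$ (which exists because $B_1(\theta)=B_2(\theta)$ is a regular $\theta$-stable Borel of $Z_G(X)$); then $X+e\in\fg_1^{reg}$ and $(X+e,B_i)\in\widetilde{\fg}_1^{reg}$ for $i=1,2$, so the first claim applies to $X+e$ and gives $B_1\sim_{G_0}B_2$. For general $X\in\fg_1$ one passes to $X_{ss}$ for free, since $X\equiv X_{ss}\pmod{[\fb_i,\fb_i]}$ so $(X_{ss},B_i)\in\pi^{-1}(X_{ss})$, and the hypothesis $B_1(\theta)=B_2(\theta)$ is unchanged. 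Your alternative --- an ``analogous conjugation argument'' with $H=B_1(\theta)$ replacing $B$ --- is vaguer, gains nothing, and inherits the same unresolved difficulty as the first claim; the perturbation $X\mapsto X+e$ is precisely the device that lets one bypass that analysis.
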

\begin{proof}
The first claim follows from the discussion above. For the second claim, we first assume that $X\in \fg_1^{ss}$. Fixing $e\in \Lie(B_1(\theta))\cap \caln(\fz_\fg(X))_1^{reg}$, then $(X+e,B_i)\in \widetilde{\fg}^{reg}_1$ for $i=1,2$ and $X+e\in \fg_1^{reg}$. The second claim now follows from the first claim for $X\in \fg_1^{ss}$. For general $X\in\fg_1$, note that $(X,B_1),(X,B_2)\in \pi^{-1}(X)$ implies that $(X_{ss},B_1),(X_{ss},B_2)\in \pi^{-1}(X_{ss})$, where $X=X_{ss}+X_{nil}$ is the Jordan decomposition of $X$. 
\end{proof}
\begin{Rem}
It is natural to ask for an explicit description of $\fgbul$. By the construction of $C_1$, we have that $(X,B)\in\fgbul$ if and only if $X\pmod{[\fb,\fb]}\in \fa\subset \ft$. For example, $(0,B)\in \fgbul$ for any $B\in \Fl_G$. Comparing with Proposition \ref{Prop: nilp fixed points}, we conclude the diagram 
\[
\begin{tikzcd}
\fgbul\ar[r, "\widetilde{\chi}_1"]\ar[d,"\pi"]&\fa\ar[d]\\
\fg_1\ar[r,"\chi_1"]&\fa/W_\fa, 
\end{tikzcd}
\]
does not give a simultaneous resolution of singularities and the map $\fgbul \to \fg_1$ is not small. We discuss the question of whether there is an interstitial space $\fgreg\subset \fgres\subset\fgbul$ in Section \ref{Sec: Smoothness} below.
\end{Rem}


\section{Moduli space of regular stabilizers}\label{Section: regular stabilizers}

In this section, we generalize to the case of quasi-split symmetric spaces several results of Donagi and Gaitsgory \cite[Section 10]{DonGaits}. These fundamentally rely on Theorem \ref{Thm: main theorem} over the regular locus.

\subsection{Regular stabilizers}\label{Sec: regular stabilizers}
With our set up as before, we have a Cartesian diagram
\[
\begin{tikzcd}
 \widetilde{\fg}^{reg}_1\ar[r, "\widetilde{\chi}_1"]\ar[d,"\pi"]&\fa\ar[d]\\
\fg^{reg}_1\ar[r,"\chi_1"]&\fa/W_\fa.
\end{tikzcd}
\]
The space $\fa/W_\fa$ is the moduli space of regular $G_0$-orbits. We shall introduce a new space which parameterizes \emph{regular stabilizers}. 

In their study of the moduli of $G$-Higgs bundles \cite{DonGaits}, Donagi and Gaitsgory introduce the moduli space of regular centralizers $\overline{G/N}$, where $N$ is the normalizer of a fixed maximal torus $T$. This is a partial compactification of the space of Cartan subalgebras of $\fg$ and is a smooth subscheme of the Grassmannian $\mathrm{Gr}^r(\fg)$of $r$-planes in $\fg$. It comes equipped with a natural smooth morphism 
\[
\varphi: \fg^{reg}\to \overline{G/N} 
\]
which sends $X\in \fg^{reg}$ to its centralizer.
 There is a ramified $W=T\backslash N$-cover $\overline{G/T}\to \overline{G/N}$, where
\[
\overline{G/T}:=\{(\fc,\fb) \in \overline{G/N}\times \Fl_G: \fc\subset \fb\}.
\]
We refer to \cite[Section 2]{DonGaits} for the  definition of a $W$-cover. We remind the reader that a Borel subalgebra is defined to be a Lie algebra of a Borel subgroup.  This is a partial compactification of the quotient map $G/T\to G/N$, which corresponds to restricting to the regular semi-simple locus. By the proof of \cite[Prop. 1.5]{DonGaits}, there exists a Cartesian square
\[
\begin{tikzcd}
 \widetilde{\fg}^{reg}\ar[r]\ar[d]&\overline{G/T}\ar[d]\\
\fg^{reg}\ar[r,"\varphi"]&\overline{G/N}.
\end{tikzcd}
\]
This has the consequence that the $W$-cover $\overline{G/T}\to \overline{G/N}$ is \'{e}tale-locally isomorphic to the $W$-cover $\ft\to \ft/W$. In the next section, we prove a relative version of Theorem 11.6 in \cite{DonGaits}, which gives an isomorphism between two commutative group schemes over $\overline{G/N}$. This isomorphism was used in a fundamental way in \cite{Ngo06}, who worked over the base $\ft/W$ rather than $\overline{G/N}$. The \'{e}tale-local isomorphism \cite[Proposition 1.5]{DonGaits} between these two $W$-covers allows for passage between these two bases. The goal of this section is to prove an analogue of this statement in the case of a quasi-split symmetric pair $(\fg,\fg_0)$.

To this end, we assume that the torus $T=Z_G(A)$ is the centralizer of a maximal $\theta$-split torus $A$. Using the pairing $[\cdot,\cdot]:\fg_1\times\fg_1\to \fg_0$, we let $Ab^{r_1}(\fg_1)\subset \mathrm{Gr}_{r_1}(\fg_1)$ denote the closed subscheme of the Grassmannian of $r_1$-planes in $\fg_1$ on which the restriction of $[\cdot,\cdot]$ vanishes identically. Consider the map 
\begin{align*}
\varphi_1:\fg^{reg}_1&\lra Ab^{r_1}(\fg_1)\\ X&\longmapsto \fz_{\fg_1}(X).
\end{align*}
Essentially the same argument of \cite[Section 10.1]{DonGaits} applies to show that this is a well defined morphism of schemes.
\begin{Prop}\label{Prop: smoothness}
The map $\varphi_1:\fg_1^{reg}\to  Ab^{r_1}(\fg_1)$ is smooth.
\end{Prop}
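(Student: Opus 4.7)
My plan is to mimic the strategy of \cite[Section 10]{DonGaits}, using the Grothendieck-Springer-type resolution from Theorem \ref{Thm: main theorem} to lift $\varphi_1$ to a morphism that is manifestly smooth, then to descend smoothness through a faithfully flat cover.

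First, I would construct a lift $\tilde{\varphi}_1: \fgreg \to Y$, where $Y$ is the natural moduli of pairs $(\fa',\fb')$ consisting of a Cartan subspace $\fa'\subset \fg_1$ together with a $\theta$-split Borel subalgebra $\fb'\supset \fa'$ (this will be the space $\overline{G_0/T_0}$ announced in the introduction). Given $(X,B)\in\fgreg$, Proposition \ref{Prop: regular characterization} identifies $B(\theta)=Z_B(X_{ss})$ as a regular $\theta$-stable Borel of $Z_G(X_{ss})$; this datum singles out a maximal $\theta$-split torus $A_{(X,B)}\subset B(\theta)$, whose Lie algebra $\fa_{(X,B)}$ is a Cartan subspace of $\fg_1$ containing $X_{ss}$, together with the associated $\theta$-split Borel subalgebra $\fb_{(X,B)}$ produced from $B$ by conjugation inside $Z_G(X_{ss})$ (as in the proof of Proposition \ref{Prop: regular characterization}). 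The assignment $(X,B)\mapsto(\fa_{(X,B)},\fb_{(X,B)})$ defines $\tilde{\varphi}_1$ as a morphism of schemes, fitting into a commutative diagram
\[
\begin{tikzcd}
\fgreg \ar[r,"\tilde{\varphi}_1"] \ar[d,"\pi"] & Y \ar[d] \\
\fg_1^{reg} \ar[r,"\varphi_1"] & M
\end{tikzcd}
\]
where $M\subset Ab^{r_1}(\fg_1)$ denotes the image of $\varphi_1$ and the right vertical map is the natural forgetful $W_\fa$-cover.

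Second, I would verify that this diagram is Cartesian and establish smoothness of $\tilde{\varphi}_1$. The Cartesian property rests on the explicit fiber description in the paragraph preceding Corollary \ref{Cor:regular Borels orbit}: for $X\in\fg_1^{reg}$ the fiber $\pi^{-1}(X)$ is in natural bijection with $W_\fa/\Stab_{W_\fa}(X_{ss})$, and this matches the fiber of $Y\to M$ over $\varphi_1(X)$. For smoothness of $\tilde{\varphi}_1$, I would identify it \'etale-locally on $Y$ with the smooth morphism $\widetilde{\chi}_1:\fgreg\to\fa$ of Theorem \ref{Thm: main theorem}: a local trivialization of the universal Cartan along $Y$ (identifying each $\fa_{(X,B)}$ with $\fa$) realizes $\tilde{\varphi}_1$ as a base change of $\widetilde{\chi}_1$.

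Third, I would descend smoothness from $\tilde{\varphi}_1$ to $\varphi_1$ by fpqc descent, using that the right vertical map $Y\to M$ is faithfully flat as a $W_\fa$-cover and that smoothness is local on the target for the fpqc topology. The principal obstacle I anticipate is the rigorous construction of $Y$ and of the lift $\tilde{\varphi}_1$ as a morphism of schemes (not merely on geometric points), and the verification of the Cartesian property at non-semisimple regular points, where the centralizer $\fz_{\fg_1}(X)$ is only a proper subspace of the ambient Cartan $\fa_{(X,B)}$; the explicit parameterization of fibers via Proposition \ref{Prop: regular characterization} and Corollary \ref{Cor:regular Borels orbit} is the key tool for handling these boundary points.
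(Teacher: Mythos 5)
Your proposal takes a genuinely different route from the paper, and it has a circularity problem.

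The paper's proof of this proposition is a self-contained tangent-space computation: it identifies $T_\fc(Ab^{r_1}(\fg_1))$ with the space of linear maps $T:\fc\to\fg_1/\fc$ satisfying $[T(y_1),y_2]+[y_1,T(y_2)]=0$, computes $d\varphi_1$ explicitly, and shows that the evaluation map $\textbf{ev}:T\mapsto T(x)$ identifies $d\varphi_1$ with the tautological quotient $\fg_1\to\fg_1/\fc$, hence is surjective. No machinery from Theorem \ref{Thm: main theorem}, Theorem \ref{Thm: local etale}, or the quasi-split hypothesis enters; the paper even remarks immediately after the proof that it works for arbitrary symmetric pairs.

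Your strategy reverses the paper's logical order, and this is where the gap lies. In the paper, Theorem \ref{Thm: local etale} --- which establishes the Cartesian square relating $\widetilde{\fg}_1^{reg}\to\overline{G_0/T_0}$ to $\fg_1^{reg}\to\overline{G_0/N_0}$ and its \'etale-local identification with $\fa\to\fa/W_\fa$ --- is proved \emph{using} Proposition \ref{Prop: smoothness}: the proof literally begins the local-structure step with ``The smoothness of the horizontal arrows implies that for any $x\in\fg_1^{reg}$, we may find a suitable affine open neighborhood\ldots''. Your second step, ``a local trivialization of the universal Cartan along $Y$ realizes $\tilde{\varphi}_1$ as a base change of $\widetilde{\chi}_1$,'' is exactly the content of Theorem \ref{Thm: local etale}, and you have not given an independent proof of it. Without such a local trivialization there is no way to compare $\tilde{\varphi}_1:\fgreg\to Y$ with $\widetilde{\chi}_1:\fgreg\to\fa$, since they have the same source but different targets, and ``base change'' requires a map between the targets that you have not constructed. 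Relatedly, you invoke $Y\to M$ as a faithfully flat $W_\fa$-cover in order to descend smoothness, but the flatness of $\overline{G_0/T_0}\to\overline{G_0/N_0}$ is itself established in the paper only after, and because of, the smoothness of $\varphi_1$. The Cartesian property you propose to verify by matching fibers is indeed established in the paper's Theorem \ref{Thm: local etale} without smoothness, but that alone does not give you the local trivialization or the flatness needed for descent. The direct tangent-space argument avoids all of this and is both shorter and more general.
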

\begin{proof}

Set $\fc=\varphi_1(x)\in Ab^{r_1}(\fg_1)$. Using the definition of $Ab^{r_1}(\fg_1)$, we may express the tangent space $T_\fc(Ab^{r_1}(\fg_1))$ as the space of maps $T:\fc\to \fg_1/\fc$ such that
\begin{equation}\label{eqn: lie algebra}
[T(y_1),y_2]+[y_1,T(y_2)]=0
\end{equation}
for all $y_1,y_2\in \fc$. To see this, we have by definition that
\[
T_\fc(Ab^{r_1}(\fg_1))= \{\fc'\in Ab^{r_1}(\fg_1\la\ep\ra): p(\fc')=\fa\},
\]
where $\ep^2=0$ and where $p:\fg_1\la\ep\ra\to \fg_1$ is the projection onto the first factor. Any linear map $T:\fc\to \fg_1$ satisfying (\ref{eqn: lie algebra}) gives rise to such an algebra by setting for any $k$-algebra $R$
\[
\fc'_T(R)=\sspan_{R\la\ep\ra}\{ a+\ep T(a): a\in \fc(R)\}.
\]
It is easy to see that $\fc'_{T_1}=\fc'_{T_2}$ if and only if $T_1(y)-T_2(y)\in \fc$ for all $y\in \fc$ and that any $\fc'$ arises in this way. This gives the claimed description. 

In terms of this description, the differential $d\varphi_1: T_x(\fg_1^{reg})\cong \fg_1\to T_{\fc}(Ab^{r_1}(\fg_1))\cong \fg_1/\fc$ sends $v\in \fg_1$ to the unique map $T:\fc\to \fg_1/\fc$ such that
\[
[T(x),y]+[y,v]=0 \quad\text{for all }y\in \fc.
\]
This identity implies that $[T(x)-v,y]=0$ for all $y$ so that we may identify $T(x)\equiv v\pmod{\fc}$. Therefore, letting $\textbf{ev}:T_\fc(Ab^{r_1}(\fg_1))\to \fg_1/\fc$ be the map $T\mapsto T(x)$, we see that the composition
\[
\fg_1\cong T_x(\fg_1^{reg})\xrightarrow{d\varphi_1}T_\fc(Ab^{r_1}(\fg_1))\xrightarrow{\textbf{ev}} \fg_1/\fc
\]
coincides with the tautological quotient map. Finally, the identity $[T(x),y]=-[x,T(y)]$ for all $y\in \fc$ implies that $\textbf{ev}$ is injective, hence an isomorphism. In particular, the image of $\varphi_1$ lies in the smooth locus of $Ab^{r_1}(\fg_1)$ and $d\varphi_1$ is surjective. This proves that $\varphi_1$ is smooth.

\end{proof}

 We define the image of this map to be $\overline{G_0/N_0}$, where $N_0=N_{G_0}(A)\subset G_0$ is the normalizer of $A$ in $G_0$. The following lemma tells us that notation $\overline{G_0/N_0}$ is reasonable.

\begin{Lem}
The $k$-points of $\overline{G_0/N_0}$ parameterizes maximal abelian subalgebras of $\fg_1(k)$ that meet $\fg_1^{reg}(k)$. Moreover, the quotient $G_0/N_0$ embeds as an open subvariety parameterizing Cartan subspaces of $\fg_1$.
\end{Lem}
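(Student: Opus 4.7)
The plan is to work directly from the construction of $\overline{G_0/N_0}$ as the scheme-theoretic image of the smooth morphism $\varphi_1:\fg_1^{reg}\to Ab^{r_1}(\fg_1)$, $X\mapsto \fz_{\fg_1}(X)$, established in Proposition \ref{Prop: smoothness}.

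For the first statement, I would argue both inclusions on $k$-points. For the forward direction, fix $X\in \fg_1^{reg}(k)$ and consider $\fc:=\fz_{\fg_1}(X)$. By the Kostant-Rallis theory (as extended to positive characteristic in \cite{Levy}), the centralizer $\fz_{\fg_1}(X)$ is an abelian subalgebra of dimension $r_1$ for any $X\in\fg_1^{reg}$; in particular $X\in\fc$ and $\fc$ lies in $Ab^{r_1}(\fg_1)$. To see maximality, suppose $\fc\subset\fc'$ with $\fc'$ abelian; then every element of $\fc'$ commutes with $X\in\fc$, so $\fc'\subset \fz_{\fg_1}(X)=\fc$. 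Thus $\fc$ is maximal abelian and meets $\fg_1^{reg}$. For the converse, suppose $\fc'\subset\fg_1$ is maximal abelian with $\fc'\cap \fg_1^{reg}(k)\ne\emptyset$, and pick $X\in \fc'\cap\fg_1^{reg}$. Since $\fc'$ is abelian, $\fc'\subset \fz_{\fg_1}(X)$, which is itself abelian of dimension $r_1$. Maximality of $\fc'$ forces equality $\fc'=\fz_{\fg_1}(X)=\varphi_1(X)\in\overline{G_0/N_0}(k)$.

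For the second statement, I would identify the image $\varphi_1(\fg_1^{rss})$ with $G_0/N_0$ and show it is open in $\overline{G_0/N_0}$. On geometric points, $\varphi_1$ sends a regular semisimple element $X$ to the unique Cartan subspace containing it, namely $\fz_{\fg_1}(X)$; by \cite[Section 2]{Levy} the group $G_0$ acts transitively on the set of Cartan subspaces of $\fg_1$ with stabilizer of a fixed $\fa$ equal to $N_0=N_{G_0}(A)$, so $\varphi_1$ restricted to $\fg_1^{rss}$ factors through an isomorphism $G_0/N_0 \iso \varphi_1(\fg_1^{rss})$. Openness then follows from two facts: $\fg_1^{rss}\subset \fg_1^{reg}$ is open, and $\varphi_1$ is smooth (hence open), so $\varphi_1(\fg_1^{rss})$ is open in the image $\overline{G_0/N_0}$.

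The main obstacle is arguably the verification that $\fz_{\fg_1}(X)$ is abelian of dimension exactly $r_1$ for every $X\in\fg_1^{reg}$ (not merely for $X$ regular semisimple). In characteristic zero this is \cite[Proposition 5]{KR71} combined with the analysis of the regular nilpotent case, and in good characteristic this is covered by \cite{Levy}; I would cite these results rather than reprove them. Given this, everything else is a short verification using the universal property of the image and the smoothness of $\varphi_1$.
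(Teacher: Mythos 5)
Your proof is correct and follows essentially the same route as the paper: both directions reduce to the observation that for $X\in\fg_1^{reg}$ (in the quasi-split case, so that $\fg_1^{reg}\subset\fg^{reg}$) the centralizer $\fz_{\fg_1}(X)=\fz_\fg(X)\cap\fg_1$ is abelian of dimension $r_1$ and hence maximal abelian, while conversely any maximal abelian $\fc'$ containing a regular $X$ lies in $\fz_{\fg_1}(X)$ and equals it by maximality. The only cosmetic difference is that the paper phrases the forward direction via the $\theta$-stable decomposition $\fz_\fg(X)=\fc_0\oplus\fc_1$ (citing Levy, Lemma 4.2) and for the second statement simply cites Levy's Theorem 2.11 and calls the openness obvious, whereas you spell out the openness via smoothness of $\varphi_1$ and openness of $\fg_1^{rss}$ — a reasonable elaboration.
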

\begin{Rem}
Since we allow positive characteristic, we remind the reader that a subspace $\fc\subset \fg_1$ is called a \emph{Cartan subspace} if it is a nilpotent subalgebra and such that if $\fg=\fg^0(\fc)\oplus \fg^1(\fc)$ is the decomposition of $\fg$ such that $\fc$ is nilpotent on $\fg^0(\fc)$ and non-singular on $\fg^1(\fc)$, then 
\[
\fg^0(\fc)\cap \fg_1=\fc.
\] In good characteristics, it is a theorem of Levy \cite[Theorem 2.11]{Levy} that such subspaces are maximal toral subspaces of $\fg_1$.
\end{Rem}

\begin{proof}
Let $X\in \fg_1^{reg}$ have centralizer $\fc=\fz_{\fg}(X)$, which is a maximal abelian subalgebra of $\fg$. As this is $\theta$-stable, it decomposes $\fc=\fc_0\oplus\fc_1$, where $\fc_0\cong\Lie(Z_{G_0}(X))$ \cite[Lemma 4.2]{Levy}. Then $\fc\mapsto \fc_1$ gives $\varphi_1(X)$. The maximality follows from the regularity of $X$. Moreover, if we are given such an abelian subalgebra $\fc'\subset \fg_1$, then it is contained in the centralizer of any regular element $X\in \fc'$. Therefore, $\fc'\subset \fz_{\fg}(X)_1$ and maximality forces equality.

 It is known that the quotient $G_0/N_0$ parameterizes Cartan subspaces \cite[Theorem 2.11]{Levy}, and the embedding is obvious.
\end{proof}
We remark that the proof of Proposition \ref{Prop: smoothness} did not rely on the symmetric space being quasi-split. Taking this into account gives a commutative diagram
\[
\begin{tikzcd}
{\fg}_1^{reg}\ar[r]\ar[d,"\varphi_1"]&{\fg}^{reg}\ar[d,"\varphi"]\\
\overline{G_0/N_0}\ar[r]&\overline{G/N},
\end{tikzcd}
\]
where the bottom arrow is given by $\fc\mapsto \fz_{\fg}(\fc).$
We note that the vertical arrows are smooth. We now define $\overline{G_0/T_0}\subset \overline{G_0/N_0}\times \Fl_G$ to be the space of pairs 
\[
(\fa,\fb), \:\fa\subset \fb,\;
\]
 under the restriction that $\fb$ is maximally split, which we recall means that $\fb(\theta)=\fb\cap\theta(\fb)$ is a regular $\theta$-stable Borel subalgebra of $\fz_{\fg}(\fa_{ss})$. Here $\fa=\fa_{ss}\oplus \fa_{nil}$ is the Jordan decomposition of the algebra $\fa$. 
As before this comes equipped with a natural closed immersion $\overline{G_0/T_0}\subset \overline{G/T}$. This may be constructed as follows: we have the diagram
\[
\begin{tikzcd}
\fgreg\ar[r,"\phi"]\ar[d,"\pi"]& \overline{G_0/N_0}\times_{\overline{G/N}}\overline{G/T}\ar[d]\ar[r]&\overline{G/T}\ar[d]\\
\fg_1^{reg}\ar[r,"\varphi"]&\overline{G_0/N_0}\ar[r]&\overline{G/N}, 
\end{tikzcd}
\]
where the arrow $\phi:\fgreg\to  \overline{G_0/N_0}\times_{\overline{G/N}}\overline{G/T}$ is given by
\[
\phi(X,B)=\left(\fz_{\fg_1}(X),(\fz_{\fg}(X),\fb)\right).
\]
Then $ \overline{G_0/T_0}$ is given by the image of the top row of arrows, and we have the following theorem.
\begin{Thm}\label{Thm: local etale}
The diagram
\[
\begin{tikzcd}
 \widetilde{\fg}_1^{reg}\ar[r,"\overline{\varphi}_1"]\ar[d]&\overline{G_0/T_0}\ar[d]\\
\fg_1^{reg}\ar[r,"\varphi"]&\overline{G_0/N_0}.
\end{tikzcd}
\]
is Cartesian. In particular, the $W_\fa$-covers $\fa\to\fa/W_\fa$ and $\overline{G_0/T_0}\to \overline{G_0/N_0}$ are \'{e}tale-locally isomorphic.
\end{Thm}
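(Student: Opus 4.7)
The plan is to construct a morphism $\Phi:\widetilde{\fg}_1^{reg}\to \fg_1^{reg}\times_{\overline{G_0/N_0}}\overline{G_0/T_0}$ by $(X,B)\mapsto (X,(\fz_{\fg_1}(X),\fb))$ and show it is an isomorphism, after which the second assertion follows formally. The image lands in the fiber product because $X\in\fz_{\fg_1}(X)\subset \fb$; to see it actually lands in $\overline{G_0/T_0}$, one must match the two ``maximally split'' conditions: Proposition \ref{Prop: regular characterization} demands $B(\theta)=Z_B(X_{ss})$ be a regular $\theta$-stable Borel of $Z_G(X_{ss})$, while the defining condition of $\overline{G_0/T_0}$ requires $\fb\cap\theta(\fb)$ to be a regular $\theta$-stable Borel of $\fz_\fg(\fc_{ss})$, where $\fc=\fz_{\fg_1}(X)$. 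To reconcile these I would verify the identity $\fz_\fg(\fc_{ss})=\fz_\fg(X_{ss})$: under the quasi-split hypothesis, $X\in\fg^{reg}$, so $\fz_\fg(X)$ is a Cartan subalgebra of $\fg$ and $X_{nil}$ is regular nilpotent in the reductive Lie algebra $\fz_\fg(X_{ss})$. The structure of the centralizer of a regular nilpotent in a reductive Lie algebra then forces the semisimple part of $\fz_\fg(X)$, and a fortiori $\fc_{ss}$, into $\fz(\fz_\fg(X_{ss}))$, giving the required equality.

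The inverse $\Psi$ is natural: given $(X,(\fc,\fb))$ in the target, the fiber-product condition gives $\fc=\fz_{\fg_1}(X)$, so $X\in\fc\subset\fb$, and we declare $\Psi(X,(\fc,\fb))=(X,B)$ where $B$ is the unique Borel subgroup with $\Lie(B)=\fb$. The same matching of conditions places $(X,B)$ in $\widetilde{\fg}_1^{reg}$. Since $\Phi$ and $\Psi$ are defined functorially on $T$-points for arbitrary test schemes $T$, they are mutually inverse morphisms of schemes. An alternative, perhaps cleaner, route is to pull back the Donagi-Gaitsgory Cartesian square \cite[Proposition 1.5]{DonGaits} along $\fg_1^{reg}\hookrightarrow\fg^{reg}$ to obtain $\fg_1^{reg}\times_{\fg^{reg}}\widetilde{\fg}^{reg}\cong\fg_1^{reg}\times_{\overline{G/N}}\overline{G/T}$, and then identify $\widetilde{\fg}_1^{reg}$ with the closed subscheme carved out by the maximally split condition, which is exactly $\fg_1^{reg}\times_{\overline{G_0/N_0}}\overline{G_0/T_0}$.

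Once the Cartesian property is in hand, the second assertion is immediate: Proposition \ref{Prop: smoothness} shows $\varphi_1$ is smooth, and Theorem \ref{Thm: main theorem} (via the Kostant-Weierstrass section argument of \cite[Corollary 6.31]{Levy}) shows $\chi_1:\fg_1^{reg}\to\fa/W_\fa$ is smooth. The two $W_\fa$-covers $\overline{G_0/T_0}\to\overline{G_0/N_0}$ and $\fa\to\fa/W_\fa$ therefore both pull back along smooth morphisms to the common object $\widetilde{\fg}_1^{reg}\to\fg_1^{reg}$, yielding the étale-local isomorphism in the sense stipulated in the introduction. The main technical obstacle I anticipate is the careful scheme-theoretic matching of the two maximally split conditions, so that $\Phi$ is a genuine isomorphism of schemes rather than merely a bijection on closed points; the identification $\fz_\fg(\fc_{ss})=\fz_\fg(X_{ss})$ is the key input that makes this possible.
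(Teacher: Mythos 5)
Your proof takes essentially the same approach as the paper's: construct the morphism $(X,B)\mapsto (X,(\fz_{\fg_1}(X),\fb))$ and its evident inverse, then deduce the étale-local isomorphism from the two Cartesian squares over the regular locus. Two small points worth noting. First, you explicitly verify $\fz_\fg(\fc_{ss})=\fz_\fg(X_{ss})$ for $\fc=\fz_{\fg_1}(X)$, which the paper glosses over; the argument you sketch is correct, since the centralizer of the regular nilpotent $X_{nil}$ in $\fl=\fz_\fg(X_{ss})$ has semisimple part $\fz(\fl)$, giving $\fc_{ss}=\fz(\fl)_1\ni X_{ss}$ and $\fl\subset\fz_\fg(\fc_{ss})$. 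Second, where the paper justifies the isomorphism by checking a geometric-point bijection and invoking smoothness (hence reducedness) of $\fgreg$, you appeal to functoriality on $T$-points; this is valid but in practice needs the same smoothness/reducedness input to promote the set-theoretic description of Proposition \ref{Prop: regular characterization} to a scheme-theoretic one, so the two justifications are morally the same.
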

\begin{proof}
Note that we have a morphism $\fgreg\lra \fg_1^{reg}\times_{\overline{G_0/N_0}}\overline{G_0/T_0}$ given by
\[
(X,B)\longmapsto \left(X,(\fz_{\fg_1}(X),\fb)\right).
\] 
There is clearly a map the other direction, namely the map which sends a triple $\left(X,(\fz_{\fg_1}(X),\fb)\right)$ to $(X,B)$, where $B$ is the unique Borel subgroup with Lie algebra $\fb$. This is obviously an inverse map on geometric points, which suffices to show it is an isomorphism since $\fgreg$ is smooth, hence reduced.
 
Now, we show that the diagram of Cartesian squares
\[
\begin{tikzcd}
\fa\ar[r,leftarrow,"\widetilde{\chi}_1"]\ar[d]& \widetilde{\fg}_1^{reg}\ar[r,"\overline{\varphi}_1"]\ar[d]&\overline{G_0/T_0}\ar[d]\\
\fa/W_\fa\ar[r,leftarrow,"\chi_1"]&\fg_1^{reg}\ar[r,"\varphi_1"]&\overline{G_0/N_0},
\end{tikzcd}
\]
implies that $\overline{G_0/T_0}\to\overline{G_0/N_0}$ is \'{e}tale-locally (with respect to \'{e}tale covers of $\overline{G_0/N_0}$) a pullback $\fa\to \fa/W_\fa$. A similar argument proves that $\fa\to \fa/W_\fa$ is \'{e}tale-locally a pull-back of $\overline{G_0/T_0}\to\overline{G_0/N_0}$. The smoothness of the horizontal arrows implies that for any $x\in \fg^{reg}_1$, we may find a suitable affine open neighborhood $x\in U$  and an affine neighborhood $V\subset \overline{G_0/N_0}$ containing ${\varphi}_1(x)$ such that there is a commutative diagram
\[
\begin{tikzcd}
 {\fg}_1^{reg}\ar[d]&U\ar[l]\ar[r,"\pi"]\ar[d]&\mathbb{A}_V^k\ar[ld,"p"]\\
\overline{G_0/N_0}&{V}\ar[l],
\end{tikzcd}
\]
 for some integer $k$. Here ${\varphi}_1|_U=p\circ\pi$ and $\pi$ is \'{e}tale \cite[Lemma 28.34.20]{stacks-project}. Using the zero section splitting $V\to \mathbb{A}^r_V$, for any $x\in \overline{G_0/N_0}$, we obtain an \'{e}tale neighborhood $V'=U\times_{\mathbb{A}^k_V}V\to \overline{G_0/N_0}$ of $x$ equipped with a locally-closed immersion $V'\to U\to \fg_1^{reg}$ such that the diagram 
\begin{equation}\label{eqn: commutes}
\begin{tikzcd}
 {\fg}_1^{reg}\ar[d]&V'\ar[l]\ar[ld]\\
\overline{G_0/N_0}
\end{tikzcd}
\end{equation}
commutes. Forming the fiber products $V'\times_{\fg_1^{reg}}\fgreg$ and $V'\times_{\overline{G_0/N_0}}\overline{G_0/T_0}$, the commutativity of (\ref{eqn: commutes}) implied that the natural map
\[
V'\times_{\fg_1^{reg}}\fgreg\to V'\times_{\overline{G_0/N_0}}\overline{G_0/T_0}
\]
is an isomorphism. 
Labeling $U'=V'\times_{\overline{G_0/N_0}}\overline{G_0/T_0}$, the $W_\fa$-cover $U'\to V'$ is thus a pullback of $\fa\to \fa/W_\fa$ by Theorem \ref{Thm: main theorem}.
\end{proof}
\begin{Ex}
For the case $(\fg,\fg_0)=(\mathfrak{sl}(2),\mathfrak{so}(2))$, it is shown in \cite[1.6]{DonGaits} that $\overline{G/N}\cong\mathbb{P}^2$, $\overline{G/T}\cong \mathbb{P}^1\times\mathbb{P}^1$ with the map
\begin{align*}
\mathbb{P}^1\times\mathbb{P}^1&\to \mathbb{P}^2\\
              ([x_1:x_2],[y_1:y_2])&\mapsto [x_1y_2+x_2y_1:x_1y_1:x_2y_2].
\end{align*}
The involution induced on $\mathbb{P}^2$ is $[a:b:c]\mapsto [-a:b:c]$. It is easy to see that $\overline{G_0/T_0}\cong\overline{G_0/N_0}\cong \mathbb{P}^1$ with $\mathbb{P}^1\to \mathbb{P}^1$ being the unique degree two map ramified over $0$ and $\infty$. These points correspond to the two nilpotent centralizers contained in $\fg_1$.
\end{Ex}

\subsection{Sheaves of abelian groups}\label{Section: abelian groups}

The final goal of this section is to prove a relative analogue of Theorem 11.6  in \cite{DonGaits}. This is an isomorphism between the tautological sheaf of regular stabilizers on $\overline{G_0/N_0}$ and a certain subsheaf of the restriction of scalars from $\overline{G_0/T_0}$, and will be useful in any attempt to generalize the results of Ng\^{o} \cite{Ngo06} to the case of a relative trace formula associated to a symmetric variety.

The first sheaf to consider is the sheaf of $\theta$-fixed stabilizers $\mathcal{C}_0\subset G_0\times \overline{G_0/N_0}$ given by
\[
\mathcal{C}_0=\{(g,\fa): Ad(g)x=x \text{ for all }x\in \fa\}.
\]
For the second group scheme, let $T$ denote the universal Cartan of $G$. 
As noted in Corolllary \ref{Cor: universal torus}, the torus $T$ may be equipped with a canonical involution $\theta_{can}:T\to T$. Let
\[
T_0:=T^{\theta_{can}}
\]
be the fixed points of this involution. Note that the neutral component $T_0^\circ$ is a torus, but we wish to consider the entire fixed-point subgroup. For example, if $(\fg,\fg_0)$ is split, then this is a finite subgroup. This component group will play a role in the study of relative trace formulae associated to split involutions. 

We also consider the group scheme $\calT_0$ over $\overline{G_0/N_0}$ defined as
\[
{\calT_0}=\left(\mathrm{Res}_{\overline{G_0/T_0}/\overline{G_0/N_0}}(T_0)\right)^{W_{\fa}}.
\]
That is, for any $\overline{G_0/N_0}$-scheme $S$
\[
\calT_0(S)=\Hom_{W_\fa}(\widetilde{S}_0,T_0),
\] 
where $\widetilde{S}_0=S\times_{\overline{G_0/N_0}}\overline{G_0/T_0}$. This functor is representable by a group scheme, giving our $\calT_0$.
\begin{Lem}\cite[Lemmas 2.1,2.2]{KnopAutomorphisms}
The group scheme $\calT_0$ exists and is a smooth, commutative affine group scheme over $\overline{G_0/N_0}$.
\end{Lem}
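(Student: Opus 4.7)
The plan is to verify in turn that $\calT_0$ is representable by an affine scheme, that it is commutative, and that it is smooth. The first two are formal consequences of the construction; smoothness, the principal obstacle, will require an infinitesimal lifting argument exploiting the étale-local structure of the cover from Theorem~\ref{Thm: local etale}.

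For representability and affineness, I would use Theorem~\ref{Thm: local etale} to observe that $\overline{G_0/T_0}\to\overline{G_0/N_0}$ is finite and flat (being étale-locally isomorphic to the finite flat map $\fa\to\fa/W_\fa$). Since $T_0$ is an affine $k$-group scheme, the Weil restriction $\mathrm{Res}_{\overline{G_0/T_0}/\overline{G_0/N_0}}(T_0)$ then exists and is affine over $\overline{G_0/N_0}$ by standard descent along finite flat morphisms. The $W_\fa$-invariant subfunctor is representable as the equalizer of the $|W_\fa|$ automorphisms induced by the action of $W_\fa$ on $\overline{G_0/T_0}$, hence is a closed subgroup scheme and remains affine. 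Commutativity is inherited pointwise from $T_0$, since for any test scheme $S$ the set of $W_\fa$-equivariant morphisms $\widetilde{S}_0\to T_0$ forms a subgroup of the abelian group $\Hom(\widetilde{S}_0,T_0)$.

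For smoothness I would work étale-locally on $\overline{G_0/N_0}$. By Theorem~\ref{Thm: local etale}, we may pass to an étale cover $S\to\overline{G_0/N_0}$ over which $\widetilde{S}_0$ becomes identified with $S\times_{\fa/W_\fa}\fa$. In this setting I would verify smoothness via the infinitesimal lifting criterion: given a square-zero thickening $U\hookrightarrow U'$ over $S$ with ideal $I$ and a $W_\fa$-equivariant morphism $f:\widetilde{U}_0\to T_0$, smoothness of $T_0$ over $k$ provides an a priori non-equivariant lift $\widetilde{f}:\widetilde{U'}_0\to T_0$. The failure of $W_\fa$-equivariance is measured by a $1$-cochain for $W_\fa$ valued in $\Lie(T_0)\otimes I$.

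The key step is to kill this obstruction. Under the standing assumption $\mathrm{char}(k)>2\kappa$, the order $|W_\fa|$ divides $|W|$ and is therefore invertible in $k$; the averaging projector $\tfrac{1}{|W_\fa|}\sum_{w\in W_\fa}w$ then produces an equivariant correction of $\widetilde{f}$, yielding the desired lift. The subtlety I expect to be most delicate is the possible disconnectedness of $T_0$: this is handled component by component, noting that the component group of $T_0$ is a finite étale $k$-group, so any $W_\fa$-equivariant morphism lands in a well-defined component on each connected piece of $\widetilde{U}_0$, reducing smoothness on each such piece to the smoothness of the connected torus $T_0^\circ$ to which the averaging argument above applies verbatim.
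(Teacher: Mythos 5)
The paper does not prove this lemma at all---it is cited directly to Knop \cite{KnopAutomorphisms}, Lemmas 2.1 and 2.2, which are established there over $\cc$. The introduction acknowledges that extending Knop's constructions to positive characteristic is part of this paper's contribution, but no argument is actually recorded for this particular lemma. Your proposal therefore supplies a proof where the paper offers only a reference, and the argument is correct as written: Weil restriction of an affine group along the finite flat cover $\overline{G_0/T_0}\to\overline{G_0/N_0}$ (finite flatness being inherited, via Theorem~\ref{Thm: local etale}, from $\fa\to\fa/W_\fa$) gives an affine group scheme; the $W_\fa$-invariants form a closed, hence affine, subgroup as the equalizer of the two natural maps to $\prod_{w\in W_\fa}\mathrm{Res}(T_0)$; commutativity is clear; and smoothness follows from the infinitesimal lifting criterion combined with the averaging projector on the cochain measuring the failure of equivariance of a non-equivariant lift.

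Two small remarks. First, the ``therefore'' in ``$|W_\fa|$ divides $|W|$ and is therefore invertible in $k$'' compresses an implicit step worth recording: every prime dividing $|W|$ is at most the Coxeter number $\kappa$ (since $|W|$ is the product of the degrees, each of which is $\le\kappa$), so the standing hypothesis $\mathrm{char}(k)>2\kappa$ makes $|W|$ invertible, and $W_\fa\subset W$ in the quasi-split case by Lemma~\ref{Prop: weyl embedding}, so $|W_\fa|$ is invertible as well. Second, the component-group reduction at the end is not actually needed: the difference of two lifts of a map along a square-zero thickening always lies in $\Lie(T_0)\otimes I=\Lie(T_0^\circ)\otimes I$ regardless of which component of $T_0$ the lifts land in, so the averaging argument applies to $T_0$ directly. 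The reduction to $T_0^\circ$ is harmless but adds nothing.
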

We have the following analogue of \cite[Theorem 11.6]{DonGaits}.
\begin{Thm}\label{Thm: isomorphism}
There is an isomorphism of smooth commutative group schemes $\iota: \mathcal{C}_0\iso \calT_0$
\end{Thm}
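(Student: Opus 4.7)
The plan is to construct $\iota$ first on the regular semisimple locus, where it is essentially canonical, and then extend to all of $\overline{G_0/N_0}$ using smoothness of both group schemes, mirroring the strategy of \cite[Theorem 11.6]{DonGaits}. Over the regular semisimple locus $\overline{G_0/N_0}^{rss}$, at a Cartan subspace $\fa = \Lie(A)$ corresponding to $X \in \fg_1^{rss}$ with $\fz_{\fg_1}(X) = \fa$, one has $\mathcal{C}_0|_\fa = Z_{G_0}(X) = Z_G(X)^\theta = T^\theta$, where $T = Z_G(A)$ is the $\theta$-stable maximal torus. For any $\theta$-split Borel $B \supset T$ (which exists by the quasi-split hypothesis and Proposition~\ref{Prop: split borels}), the canonical isomorphism $T \xrightarrow{\sim} B/[B,B] = \mathcal{T}$ intertwines $\theta|_T$ with $\theta_{can}$ by Proposition~\ref{Prop: canonical involution}, so it restricts to $T^\theta \xrightarrow{\sim} T_0$. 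As $B$ varies over the $W_\fa$-torsor of $\theta$-split Borels containing $T$, these isomorphisms assemble into a $W_\fa$-equivariant map from $\pi^{-1}(\fa)$ to $T_0$, i.e., an element of $\mathcal{T}_0|_\fa$. This defines $\iota$ over the rss locus and manifestly exhibits it as an isomorphism there.

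To extend $\iota$ over all of $\overline{G_0/N_0}$, I first show that both $\mathcal{C}_0$ and $\mathcal{T}_0$ are smooth affine group schemes over $\overline{G_0/N_0}$. Smoothness of $\mathcal{T}_0$ is the cited lemma; smoothness of $\mathcal{C}_0$ follows by smooth descent along $\varphi_1$ from the smoothness of the universal stabilizer $Z^{reg} \to \fg_1^{reg}$ (a stabilizer scheme of a smooth action with constant fiber dimension by regularity). Since $\overline{G_0/N_0}$ is smooth and $\overline{G_0/N_0}^{rss}$ is dense, the rss isomorphism extends uniquely to a morphism $\iota : \mathcal{C}_0 \to \mathcal{T}_0$ by normality of $\mathcal{C}_0$ together with the affineness of $\mathcal{T}_0$ over $\overline{G_0/N_0}$. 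To verify that $\iota$ is an isomorphism, it suffices to check fiberwise, and for a general $\fa \in \overline{G_0/N_0}$ corresponding to $X \in \fg_1^{reg}$ with Jordan decomposition $X = X_{ss} + X_{nil}$, one identifies $\pi^{-1}(\fa) \cong W_\fa / \Stab_{W_\fa}(X_{ss})$ via the explicit description after Proposition~\ref{Prop: regular characterization}, and matches $Z_{G_0}(X)$ with $\Hom_{W_\fa}(\pi^{-1}(\fa), T_0)$ by reducing to the regular nilpotent centralizer in the induced quasi-split symmetric pair $(Z_G(X_{ss}), Z_G(X_{ss})^\theta)$.

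I expect the main obstacle to be this fiberwise verification at non-semisimple regular points, where $Z_{G_0}(X)$ acquires a unipotent factor (the $G_0$-centralizer of the regular nilpotent $X_{nil}$ inside $Z_G(X_{ss})$) that must be shown to correspond precisely to the $\Stab_{W_\fa}(X_{ss})$-equivariance condition on elements of $\Hom_{W_\fa}(\pi^{-1}(\fa), T_0)$. One route is to deform from the rss case using the smooth family $\widetilde{\chi}_1 : \widetilde{\fg}_1^{reg} \to \fa$ of Theorem~\ref{Thm: main theorem} to transport the rss identification to specializations, thereby reducing the verification to injectivity on the semisimple quotient together with a dimension count matching the unipotent part to the kernel of restriction to the stabilizer-fixed subspace.
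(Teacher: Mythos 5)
The paper's proof takes a fundamentally different and structurally cleaner route than yours: rather than constructing $\iota$ from scratch on the rss locus and then extending, it pulls back Donagi--Gaitsgory's isomorphism $\mathcal{C}\iso\calT$ over $\overline{G/N}$ to get $\mathcal{C}'\iso\calT'$ over $\overline{G_0/N_0}$, observes that both sides carry a compatible involution $\theta$, and identifies $\mathcal{C}_0$ with $(\mathcal{C}')^{\theta}$ and then $(\calT')^\theta$ with $\calT_0$. The smoothness of $\mathcal{C}_0$ drops out of \cite[Proposition 3.4]{EdixhovenNeron} applied to the involution acting on the smooth scheme $\mathcal{C}'$; the comparison $(\calT')^\theta\iso\calT_0$ is a closed immersion of smooth group schemes, so it suffices to check it is an isomorphism over the dense rss locus. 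In other words, the inherited isomorphism and the closed-immersion structure do all the ``extension'' work for free.

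Your proposal has several genuine gaps. First, your argument for smoothness of $\mathcal{C}_0$ (``stabilizer scheme of a smooth action with constant fiber dimension'') is incomplete: constant fiber dimension does not by itself give flatness of the stabilizer scheme without a Cohen--Macaulayness input, and in positive characteristic (which the paper allows) the fibers $Z_{G_0}(X)$ are not automatically smooth group schemes, so neither Cartier nor miracle flatness applies off the shelf. The paper's route through Edixhoven's theorem on fixed points of linearly reductive group actions is precisely what handles this. Second, the extension step is unjustified: the complement of the rss locus $G_0/N_0$ in $\overline{G_0/N_0}$ is a divisor (as in the $\SL(2)$ example, where $\overline{G/N}\cong\mathbb{P}^2$ minus a conic), not of codimension $\geq 2$, so normality plus affineness of the target does not give unique extension of a morphism defined on the open dense subscheme. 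Third, the fiberwise verification at non-semisimple regular points, which you correctly identify as the crux, is only sketched; in particular matching the unipotent part of $Z_{G_0}(X)$ to a piece of $\Hom_{W_\fa}(\pi^{-1}(\fa),T_0)$ is exactly where the computation with the canonical involution $\theta_{can}$ and the Jordan decomposition $g=tn$ with $t\in Z(Z_G(\fa_{ss}))$ is needed, and that computation is absent. None of these is a cosmetic issue: the paper's strategy of importing the DG isomorphism and passing to $\theta$-fixed points is what lets it avoid all three.
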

 We are currently working under the assumption that $G_{der}$ is simply connected. In Section \ref{Section: not sc}, we explain how to extend the result to the general case.
\begin{proof}
Recall the isomorphism $\iota: \mathcal{C}\iso \calT$ over $\overline{G/N}$ \cite{DonGaits}, where 
\[
\mathcal{C} = \{(g,\fc)\in G\times \overline{G/N}: Ad(g)x=x \text{ for all }x\in \fc\}
\]
is the canonical centralizer group scheme and
\[
\calT= \left(\mathrm{Res}_{\overline{G/T}/\overline{G/N}}(T)\right)^{W}.
\] This morphism is defined as follows: for any $\overline{G/N}$-scheme $S$, we take an $S$-point of $\mathcal{C}$ to the composition
\[
\widetilde{S}=S\times_{\overline{G/N}}\overline{G/T}\to \mathcal{C}\times_{\overline{G/N}}\overline{G/T}\xrightarrow{\iota'} T,
\]
which is an arrow $S\to \calT$. 
 On geometric points, the isomorphism with $\calT$ takes $(g,\fa)\in \mathcal{C}$ to the $W$-equivariant map
\begin{align*}
\iota(g,\fa): \Fl_G^\fa&\to T\\
			\fb&\to g\pmod{[B,B]},  
\end{align*}
where $\Fl_G^\fa$ is the fiber over $\fa$ in $\overline{G/T}$ the reduced subscheme of which consists of the relevant Borel subalgebras, and $\Lie(B)=\fb$. 

We are interested in the fiber products
\[
\begin{tikzcd}
\mathcal{C}':=\overline{G_0/N_0}\times_{\overline{G/N}}\mathcal{C}\ar[r]\ar[d]&\mathcal{C}\ar[d]\\
\overline{G_0/N_0}\ar[r]&\overline{G/N},
\end{tikzcd}
\]
and the corresponding diagram defining $\calT':=\calT\times_{\overline{G/N}}\overline{G_0/N_0}$. Then we have $\iota:\mathcal{C}'\iso \calT'$ is an isomorphism of smooth group schemes over $\overline{G_0/N_0}$. There is a natural involution on $\mathcal{C}$ by restricting the involution $\theta(g,\fa)= (\theta(g),\theta(\fa))$ on $G\times\overline{G/N}$ to $\mathcal{C}$. This naturally induces an involution on $\mathcal{C}'$ given by
\[
\theta(g,\fa)=(\theta(g),\fa).
\]
In particular, the fixed-point subgroup scheme is precisely $\mathcal{C}_0$. By \cite[Proposition 3.4]{EdixhovenNeron}, it follows that $\mathcal{C}_0$ is smooth over $\overline{G_0/N_0}$.
 The corresponding involution on $\calT'$ sends $\iota(g,\fa)$ to $\iota(\theta(g),\fa)$ so that $\iota$ induces an isomorphism
 \[
 \iota:\mathcal{C}_0\iso (\calT')^\theta.
 \]
\begin{Lem}\label{Lem: fixed subgroup}
With respect to this involution, there is an isomorphism $(\calT')^\theta\iso \calT_0$.
\end{Lem}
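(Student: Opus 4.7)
The plan is to reduce the statement and then construct the isomorphism explicitly. By construction, the $\theta$-action on $\calT'$ is defined so that $\iota : \mathcal{C}' \iso \calT'$ is $\theta$-equivariant, hence $\iota$ restricts to a tautological isomorphism $(\calT')^\theta \iso \mathcal{C}_0$. So the lemma reduces to constructing a natural isomorphism $\iota_0 : \mathcal{C}_0 \iso \calT_0$ of smooth commutative group schemes over $\overline{G_0/N_0}$.

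I would define $\iota_0$ on functor of points by mimicking $\iota$: given an $\overline{G_0/N_0}$-scheme $S$ and $(g,\fc) \in \mathcal{C}_0(S)$, let $\iota_0(g,\fc) : \widetilde{S}_0 \to T_0$ send a maximally split Borel subalgebra $\fb$ over $\fc$ to the class $g \pmod{[\fb,\fb]} \in T$. The $W_\fa$-equivariance of this assignment is inherited from the $W$-equivariance of $\iota(g,\fc)$, since $\widetilde{S}_0 \subset \widetilde{S}$ is $W_\fa$-stable. The crucial step is to show the image actually lies in $T_0 \subset T$. For this, one picks $X \in \fc \cap \fg_1^{reg}$ so that $Z_G(\fc) = Z_G(X)$ is connected under the simply-connected assumption on $G_{der}$; since $\fc$ is $\theta$-stable and $\fc \subset \fb$, also $\fc \subset \theta(\fb)$, so $Z_G(\fc) \subset B \cap \theta(B) = B(\theta)$. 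Because $\fb$ is maximally split, $B(\theta)$ is a $\theta$-stable Borel subgroup of the $\theta$-stable reductive subgroup $Z_G(\fc_{ss})$, whose universal Cartan coincides canonically with that of $G$; applying Proposition \ref{Prop: canonical involution} to this pair shows the projection $B(\theta) \to B(\theta)/[B(\theta),B(\theta)] \hookrightarrow T$ intertwines the induced $\theta$ with $\theta_{can}$, so $g \in Z_G(\fc)^\theta \subset B(\theta)^\theta$ maps into $T^{\theta_{can}} = T_0$.

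To conclude that $\iota_0$ is an isomorphism, I would combine a fiberwise check with smoothness. Both $\mathcal{C}_0$ and $\calT_0$ are smooth commutative group schemes over $\overline{G_0/N_0}$ of the same relative dimension, and $\iota_0$ is a homomorphism of group schemes by construction. Over the dense open regular semisimple locus $G_0/N_0 \subset \overline{G_0/N_0}$, the fiber $(\mathcal{C}_0)_\fa$ identifies with $T(A)^\theta$, where $T(A) = Z_G(A)$ is the Cartan centralizing the maximal $\theta$-split torus $A$ with $\Lie(A) = \fa$; meanwhile $(\calT_0)_\fa \cong T_0$ via any $\theta$-split Borel containing $T(A)$ (such Borels form a $W_\fa$-torsor by Proposition \ref{Prop: split borels}, and each yields a $(\theta,\theta_{can})$-equivariant isomorphism $\iota_\fb : T(A) \iso T$), and these identifications visibly match $\iota_0$. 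An inverse morphism can then be constructed by the same formula read backwards, giving a scheme-theoretic isomorphism. The main difficulty lies in the compatibility of $\theta_{can}$ across the nested subgroups $Z_G(\fc_{ss}) \subset G$, needed for the well-definedness of $\iota_0$ at non-semisimple centralizers.
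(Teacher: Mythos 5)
Your strategy mirrors the paper's at a high level (define the map via $(g,\fc)\mapsto\bigl(\fb\mapsto g\bmod[\fb,\fb]\bigr)$, check the image lands in $T_0$, then verify it is an isomorphism), but the key well-definedness step has a genuine gap which you yourself flag at the end. You assert that the projection
\[
B(\theta)\lra B(\theta)/[B(\theta),B(\theta)]\hra T
\]
intertwines the involution $\theta$ on $B(\theta)$ with $\theta_{can}$ on $T$. This does not follow from Proposition \ref{Prop: canonical involution}. That proposition produces $\theta_{can}$ by transporting a \emph{conjugate} involution $\theta^h$ along a $\theta^h$-\emph{split} Borel: the compatibility statement is about the torus $S=B\cap\theta^h(B)$ inside a $\theta^h$-split $B$, not about a $\theta$-\emph{stable} Borel such as $B(\theta)$. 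Indeed $B(\theta)$ is $\theta$-stable but never $\theta|_L$-split (its intersection with its $\theta$-image is itself, not a torus), so there is no reason for the quotient map to intertwine $\theta$ with $\theta_{can}$, and in general it does not. Restricting attention to the sub-symmetric-pair $(Z_G(\fc_{ss}),Z_G(\fc_{ss})^\theta)$ does not fix this: one still has to match the canonical involution of the Levi with that of $G$ across a $\theta$-stable, not $\theta$-split, Borel.

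The paper resolves this without any such intertwining claim. Because $B$ is maximally split, one may choose $h\in Z_G(\fa_{ss})$ such that $B$ is $\theta^h$-split, and for such $h$ the quotient map really does transport $\theta^h$ to $\theta_{can}$. The decisive additional input is the Jordan decomposition $g=tn$ together with the fact (from \cite[Theorem 7]{KR71}) that the semisimple part $t$ lies in $Z(Z_G(\fa_{ss}))$. Since $h$ centralizes $\fa_{ss}$ and $t$ is central in $Z_G(\fa_{ss})$, one gets $\theta^h(t)=\theta(t)=t$, and since $g\equiv t\bmod[B,B]$ the computation
$\theta_{can}(t_\fb)=\theta^h(g)\bmod[B,B]=\theta^h(t)\bmod[B,B]=t_\fb$
goes through. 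Your argument has no analogue of this step and therefore cannot be completed as written. Separately, your closing claim that ``an inverse morphism can be constructed by the same formula read backwards'' elides the actual content of the paper's second half: one first observes $(\calT')^\theta\to\calT_0$ is a closed immersion (via \cite[Proposition 3.1]{EdixhovenNeron}), extends a $W_\fa$-equivariant $\widetilde S_0\to T_0$ to a $W$-equivariant $\widetilde S\to T$ using the isomorphism $W\times^{W_\fa}(G_0/T_0)\iso (G_0/N_0)\times_{G/N}(G/T)$, and only then checks injectivity of $Z_G(\fa)\to B/[B,B]$ on the regular semisimple locus to deduce the inverse factors through $(\calT')^\theta$. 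That extension step is needed and is not just ``the formula backwards.''
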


\begin{proof}
We first construct the map. Let $S$ be a $\overline{G_0/N_0}$-scheme and let $x:S\to \mathcal{C}_0$ be a $\theta$-fixed point. The corresponding $S$-point of $\calT$ is a $W$-equivariant map
\[
\varphi_x:\widetilde{S}=S\times_{\overline{G/N}}\overline{G/T}\to T.
\]
Note that there is a natural inclusion
\[
\widetilde{S}_0=S\times_{\overline{G_0/N_0}}\overline{G_0/T_0}\hra S\times_{\overline{G_0/N_0}}\left(\overline{G_0/N_0}\times_{\overline{G/N}}\overline{G/T}\right)=\widetilde{S},
\]
so that by restriction we have a morphism $\varphi_x:\widetilde{S}_0\to T$ which is $W_\fa$-equivariant. It remains to show that the image lies in $T_0\subset T$. For each geometric point $s\in S$ let $x(s)=(g,\fa)\in \mathcal{C}_0$ be the corresponding geometric point of $\mathcal{C}_0$. The gives rise to a map $(\Fl_G^\fa)_{split}\to T$ given by
\[
{\varphi_x}(\fb)=t_\fb =g\pmod{[B,B]},
\]
 for all maximally split Borel subgroups with $\fa\subset \fb=\Lie(B)\in(\Fl_G^\fa)_{split}$. Since $B$ is maximally split,  the proof of Proposition \ref{Prop: canonical involution} implies we may choose $h\in Z_{G}(\fa_{ss})$ such that $B$ is $\theta^h$-split. Since $g\in Z_{G}(\fa)$, if we write $g=tn$ for the Jordan decomposition, then $t\in Z(Z_{G}(\fa_{ss}))$. This follows from the corresponding fact about centralizers of regular nilpotent elements and \cite[Theorem 7]{KR71}. We may now compute
\begin{align*}
\theta_{can}(t_\fb)&=\theta^h(g)\pmod{[B,B]}\\
			&=\theta^h(t)\pmod{[B,B]}\\
			&=\theta(t)\pmod{[B,B]}\\&=g\pmod{[B,B]}=t_\fb,
\end{align*}
where we used the fact that $x(s)=(g,\fa)\in \mathcal{C}_0$ is a fixed point. Therefore, the morphism $\varphi_x:\widetilde{S}_0\to T$ factors through the inclusion of $T_0\subset T$, and we have a morphism $(\calT')^\theta\lra \calT_0$. 

We now show that this morphism is an isomorphism. Using \cite[Proposition 3.1]{EdixhovenNeron}, which in particular implies that $\overline{G_0/N_0}\subset \overline{G/N}$ is a closed immersion, we see that
\[
(\mathcal{T}')^\theta\subset \mathcal{T}'\subset \mathcal{T}
\]
are all closed immersions. In particular, the morphism $(\calT')^\theta\lra\calT_0$ is closed. Since $\calT_0$ is smooth (hence reduced), it suffices to check that this is an isomorphism over the regular semi-simple locus. Note that $W$-equivariance implies that for any $S\to \overline{G_0/N_0}$, a morphism $\widetilde{S}_0\to T_0$ determines a unique morphism $\widetilde{S}\to T$. This is because
\[
W\times^{W_\fa}{G_0/T_0}\iso {G_0/N_0}\times_{{G/N}}{G/T}, 
\]
where the map is given on geometric points by $[(w,gT_0)]\mapsto (gN_0,gw^{-1}T)$. This gives a natural map $\calT_0\to \calT'$. 

Since the map $Z_{G}(\fa)\to B/[B,B]$ is injective over the regular semi-simple locus, the previous argument implies that $\theta(g)=g$. This implies that the above morphism factors through $\calT_0\to (\calT')^\theta$, and it gives an inverse morphism on this locus. This shows that $(\calT')^\theta\to \calT_0$ is an isomorphism.
\end{proof}
This completes the proof of Theorem \ref{Thm: isomorphism}. Indeed we already have seen that $\mathcal{C}_0$ is smooth and that there is an isomorphism $\mathcal{C}_0\iso (\calT')^\theta$.
\end{proof}
Given the inclusion of subgroups $T_1:=T^{-\theta_{can}}\subset T$, we may form the following subgroup scheme of $\mathcal{C}$ over $\overline{G_0/N_0}$: 
\[
\mathcal{C}_1=\{(g,\fa)\in \mathcal{C}: \theta(g)=g^{-1}\},
\] 
We may similarly define $T_1\subset T$ and form the corresponding $W_\fa$-invariant restriction of scalars group schemes $\calT_1$.

\begin{Cor}
We also have isomorphisms $\mathcal{C}_1\iso \calT_1$.
\end{Cor}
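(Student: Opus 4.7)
The plan is to mimic the proof of Theorem~\ref{Thm: isomorphism}, replacing the involution $(g,\fa)\mapsto(\theta(g),\fa)$ on $\mathcal{C}'$ by the order-two automorphism
\[
\sigma(g,\fa)=(\theta(g)^{-1},\fa),
\]
whose fixed-point subscheme over $\overline{G_0/N_0}$ is precisely $\mathcal{C}_1$. Since $\mathcal{C}'$ is smooth, \cite[Proposition 3.4]{EdixhovenNeron} then yields that $\mathcal{C}_1=(\mathcal{C}')^\sigma$ is also smooth over $\overline{G_0/N_0}$. The key observation is that under $\iota:\mathcal{C}'\iso\calT'$, the involution $\sigma$ is transported to the involution sending a $W$-equivariant morphism $\varphi:\widetilde{S}\to T$ to $\theta_{can}\circ(\varphi^{-1})$; and the fixed points of $t\mapsto\theta_{can}(t^{-1})$ on $T$ itself are exactly the subgroup $T_1=T^{-\theta_{can}}$, which singles out $\calT_1$ as the correct target.

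Next I would run the argument of Lemma~\ref{Lem: fixed subgroup} essentially verbatim, with $T_0$ replaced by $T_1$ and the condition $\theta(g)=g$ replaced by $\theta(g)=g^{-1}$. The only geometric computation that needs to be re-verified is the following: for a geometric point $(g,\fa)\in\mathcal{C}_1$ with Jordan decomposition $g=tn$, and a maximally split Borel $B\supset\fa$, one chooses $h\in Z_G(\fa_{ss})$ so that $B$ is $\theta^h$-split. As in Lemma~\ref{Lem: fixed subgroup}, one has $t\in Z(Z_G(\fa_{ss}))$, hence $\theta^h(t)=\theta(t)$, and $\theta(g)=g^{-1}$ therefore gives
\[
\theta_{can}(t_\fb)=\theta^h(g)\pmod{[B,B]}=g^{-1}\pmod{[B,B]}=t_\fb^{-1}.
\]
Consequently the associated $W_\fa$-equivariant map $\widetilde{S}_0\to T$ factors through $T_1$, producing a morphism $(\calT')^\sigma\to\calT_1$.

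To conclude, I would argue exactly as in Lemma~\ref{Lem: fixed subgroup} that this morphism is an isomorphism: it is a closed immersion into the smooth (hence reduced) scheme $\calT_1$, so it suffices to check bijectivity over the regular semisimple locus of $\overline{G_0/N_0}$. There, $W$-equivariance together with the injectivity of $Z_G(\fa)\to B/[B,B]$ allows one to extend a $W_\fa$-equivariant map $\widetilde{S}_0\to T_1$ uniquely to a $W$-equivariant map $\widetilde{S}\to T$ that is automatically $\sigma$-fixed, giving the inverse.

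The main obstacle I foresee is only verifying that $\iota$ is equivariant for the two involutions, that is, that $\iota$ intertwines $\sigma$ on $\mathcal{C}'$ with $\varphi\mapsto\theta_{can}\circ\varphi^{-1}$ on $\calT'$. This is routine, since $\iota$ is defined through the canonical quotient $B\to B/[B,B]$, and both inversion on $T$ and the canonical involution $\theta_{can}$ are compatible with this quotient. Every other step is a direct parallel of the proof of Theorem~\ref{Thm: isomorphism}.
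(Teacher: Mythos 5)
Your proposal is correct and follows exactly the route the paper intends: the paper's own proof of this corollary simply says ``the argument above goes through verbatim in this case; we leave the details to the reader,'' and your write-up supplies precisely those details, correctly identifying the relevant order-two automorphism $\sigma(g,\fa)=(\theta(g)^{-1},\fa)$ (which is a group automorphism because $\mathcal{C}'$ is commutative), its transport through $\iota$ to $\varphi\mapsto\theta_{can}\circ\varphi^{-1}$, and the modified pointwise computation $\theta_{can}(t_\fb)=t_\fb^{-1}$ that lands in $T_1$.
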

\begin{proof}
The argument above goes through verbatim in this case. We leave the details to the reader.  

\end{proof}

\subsection{When $G_{der}$ is not simply connected}\label{Section: not sc}
In \cite{DonGaits}, the authors do not assume that $G_{der}$ is simply connected. That they work in full generality is of the utmost importance for applications to the Langlands program. In this subsection, we describe the analogous result in the symmetric space setting when we relax the simple-connectedness assumption. 

 Donagi and Gaitsgory first define $$\overline{\calT}=\left(\mathrm{Res}_{\overline{G/T}/\overline{G/N}}(T)\right)^W,$$ as in the preceding section, then define a subgroup group scheme $\calT\subset \overline{\calT}$ by imposing that certain eigenvalues occur on the branching locus of the map $\overline{G/T}\to \overline{G/N}$ to obtain an isomorphism $\mathcal{C}\iso\calT$. More precisely, let $\Phi=\Phi(\fg,\ft)$ denote the set of roots of $(G,T)$. For any root $\al$ of $T$, let $D_\al\subset \overline{G/T}$ denote the fixed-point locus of the involution $s_\al$. For any $S\to \overline{G/N}$ and $S$-point $t:\widetilde{S}=S\times_{\overline{G/N}}\overline{G/T}\to T$ of $\overline{\calT}$, the composition 
\[
\qquad\qquad\qquad\qquad\qquad\qquad\qquad S\times_{\overline{G/N}}D_\al\hra \widetilde{S}\xrightarrow{t} T\xrightarrow{\al}\Gm\qquad\qquad\qquad\qquad\qquad\qquad(C_\al)
\] 
has image $\pm1$. The group subscheme $\calT$ is defined to be the subgroup of maps avoiding $-1$, which as a short-hand we call condition $(C_\al)$. They then show that $\mathcal{C}\iso \calT$.

Under the assumption that $G_{der}$ is simply connected, this subscheme is actually the entire group $\overline{\calT}$. Nevertheless, the argument in the proof of Theorem \ref{Thm: isomorphism} did not depend on this restriction, so to generalize we need only explicate the appropriate restrictions on the points of the group scheme $\calT_0$ for Lemma \ref{Lem: fixed subgroup} to hold. 

To make this precise, we drop the assumption that $G_{der}$ is simply connected and now set $$\overline{\calT}_0=\left(\mathrm{Res}_{\overline{G_0/T_0}/\overline{G_0/N_0}}(T_0)\right)^{W_{\fa}},$$
and describe a subgroup scheme $\calT_0\subset \overline{\calT}_0$ such that we have an isomorphism $\mathcal{C}_0\iso \calT_0$.
For each $\al\in \Phi$, we form the fiber product $D_\al^\theta=\overline{G_0/T_0}\times_{\overline{G/T}}D_\al$. This is never empty since it contains the pairs $(\fa,\fb)$ where $\fa$ is nilpotent, for example. Then for any scheme $S\to \overline{G_0/N_0}$, the proof of Lemma \ref{Lem: fixed subgroup} makes clear that for an element $t\in \calT^\theta(S)$, we have a commutative diagram
\[
\begin{tikzcd}
S\times_{\overline{G/N}}D_\al\ar[r]&\widetilde{S}\ar[r,"t"]&T\ar[dr,"\al"]&\\
S\times_{\overline{G_0/N_0}}D_\al^\theta\ar[r]\ar[u]&\widetilde{S}_0\ar[u]\ar[r,"t"]&T_0\ar[u]\ar[r,"\al"]&\Gm.
\end{tikzcd}
\]
Since $t$ satisfies the condition $(C_\al)$, we conclude that the composition $\lam\circ t: S\times_{\overline{G_0/N_0}}D_\al^\theta\to \Gm$ avoids $-1$. In particular, we have the following characterization.
\begin{Cor}
Define subgroup $\calT_0\subset \overline{\calT}_0$ so that for any $\overline{G_0/N_0}$-scheme $S$, the set of $S$-points $\calT_0(S)$ consists of $W_\fa$-equivariant arrows $t: \widetilde{S}_0\to T_0$ such that for every $\al\in \Phi$ the composition
\begin{equation*}
S\times_{\overline{G_0/N_0}}D_\al^\theta\hra\widetilde{S}_0\xrightarrow{t}T_0\xrightarrow{\al|_{T_0}}\Gm
\end{equation*}
avoids $-1\in \Gm$. Then we have an isomorphism $\mathcal{C}_0\iso \calT_0$.
\end{Cor}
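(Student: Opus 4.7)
The strategy is to deduce the corollary from the general (not necessarily simply connected) version of the Donagi--Gaitsgory isomorphism $\mathcal{C} \iso \calT$ over $\overline{G/N}$, together with the argument already established in the proof of Theorem \ref{Thm: isomorphism}. First I would observe that the formal structure of the proof of Theorem \ref{Thm: isomorphism} is independent of the simple-connectedness hypothesis: one forms the base change $\mathcal{C}' = \mathcal{C}\times_{\overline{G/N}}\overline{G_0/N_0}$ and $\calT' = \calT\times_{\overline{G/N}}\overline{G_0/N_0}$, obtains $\mathcal{C}' \iso \calT'$, and identifies $\mathcal{C}_0 = (\mathcal{C}')^\theta$ via the involution $(g,\fa) \mapsto (\theta(g),\fa)$ (using \cite[Proposition 3.4]{EdixhovenNeron} for smoothness). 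Thus the only task is to identify the fixed-point subscheme $(\calT')^\theta$ with the subgroup $\calT_0$ of $\overline{\calT}_0$ defined in the statement.

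The heart of the argument is therefore a generalization of Lemma \ref{Lem: fixed subgroup}. Given a $\overline{G_0/N_0}$-scheme $S$ and a point $t\in (\calT')^\theta(S)$, viewed as a $W$-equivariant morphism $t:\widetilde{S}\to T$ satisfying condition $(C_\al)$ for every $\al\in \Phi$, I would restrict along the closed immersion $\widetilde{S}_0 \hookrightarrow \widetilde{S}$. The proof of Lemma \ref{Lem: fixed subgroup}, which used that maximally split Borel subgroups and $\theta$-fixedness force the image in the universal Cartan to lie in $T_0$, applies verbatim to show that the restriction factors as $t|_{\widetilde{S}_0}:\widetilde{S}_0\to T_0$, and it is $W_\fa$-equivariant by construction. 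Functoriality of fiber products yields the commutative diagram displayed just before the corollary, so condition $(C_\al)$ on $t$ restricts to the required avoidance condition on $S\times_{\overline{G_0/N_0}}D_\al^\theta$; this gives a well-defined morphism $(\calT')^\theta \to \calT_0$.

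For the inverse direction, I would use the $G_0$-equivariant isomorphism
\[
W\times^{W_\fa}(G_0/T_0)\iso (G_0/N_0)\times_{G/N}(G/T)
\]
already invoked in the proof of Lemma \ref{Lem: fixed subgroup}, which extends over the partial compactifications by the smoothness results of Section \ref{Sec: regular stabilizers}. This identity shows that any $W_\fa$-equivariant morphism $\widetilde{S}_0\to T_0$ induces a unique $W$-equivariant morphism $\widetilde{S}\to T$, automatically $\theta$-invariant because $T_0$ is $\theta_{can}$-fixed and the $W$-action intertwines $\theta_{can}$ appropriately. Condition $(C_\al)$ for the extended morphism needs to be checked: away from $D_\al^\theta$ this is automatic (the fixed divisor $D_\al$ already meets $\overline{G_0/T_0}$ in $D_\al^\theta$ by construction, and elsewhere the avoidance of $-1$ is an open condition propagated by $W$-equivariance), while on $D_\al^\theta$ it is exactly the imposed hypothesis.

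The main obstacle I anticipate is the last verification: confirming that the avoidance condition on $D_\al^\theta$ for $t\in \calT_0(S)$ suffices to produce condition $(C_\al)$ for the extended $W$-equivariant map on all of $D_\al\cap \widetilde{S}$. This requires a careful chasing through the decomposition of $D_\al$ under the $W$-action relative to $W_\fa$, using that the $W$-orbit of $D_\al^\theta$ generates $D_\al$ up to a locus not meeting the image, together with the fact that both $\al|_{T_0}$ and $W$-equivariance are compatible with the canonical involution $\theta_{can}$ on $T$ through Proposition \ref{Prop: canonical involution}. Once this is handled, the mutual inverses give the isomorphism $\mathcal{C}_0\iso \calT_0$.
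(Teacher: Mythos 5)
Your proposal follows the same overall strategy as the paper: invoke the non-simply-connected form of the Donagi--Gaitsgory isomorphism, base change to $\overline{G_0/N_0}$, take $\theta$-fixed points, and re-run Lemma~\ref{Lem: fixed subgroup}. Your forward direction (restricting a $\theta$-fixed section to $\widetilde{S}_0$, landing in $T_0$, and observing that condition $(C_\al)$ restricts to the avoidance condition on $S\times_{\overline{G_0/N_0}}D_\al^\theta$ via functoriality of fiber products) matches the paper exactly.

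Where you diverge is in the reverse direction. You attempt to verify $(C_\al)$ \emph{globally} for the $W$-equivariantly extended morphism $\widetilde{S}\to T$, and you correctly flag this as the delicate step — one would need to track how $D_\al$ decomposes under $W$-translates of $\widetilde{S}_0$ and match up the $(C_{w^{-1}\al})$-conditions. This can be pushed through, but it is more work than necessary. The paper's Lemma~\ref{Lem: fixed subgroup} instead reduces to the regular semisimple locus: there the $W$-action on $\overline{G/T}$ (and hence on $\overline{G_0/T_0}$) is free, so $D_\al$ and $D_\al^\theta$ are empty and the conditions $(C_\al)$ are vacuous. Combined with the observation that $(\calT')^\theta\to\calT_0$ is a closed immersion (via \cite[Proposition 3.1]{EdixhovenNeron}) and that $\calT_0$ is smooth, hence reduced, an isomorphism over the dense rss locus upgrades to an isomorphism globally. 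If you adopt that reduction, the "main obstacle" you anticipate disappears, and your argument closes without the $W$-orbit chasing. Both routes are valid; the paper's is shorter.
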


\begin{Ex}
In the case of $(\fsl(2),\fso(2))$, we need only consider one root $\al:T\to \Gm$. In this case, $$D^\theta_{\al}=\Spec k\sqcup \Spec k=0\sqcup \infty$$ is the disjoint union of points corresponding to the two nilpotent regular centralizers contained in $\fg_1$ and  associated $\theta$-stable Borel subalgebras. 

Working with $G=\SL(2)$ gives $T_0=Z(G)= \{\pm Id\}$. For either nilpotent closed point $n$, $n \times_{\overline{G_0/N_0}}D_\al^\theta = \Spec k$ is the corresponding pair and there are two morphisms $t:\Spec k \to T_0$. Since $\al(\pm I)=1$, both are admissible and we find
$
(\mathcal{C}_0)_n \iso \{\pm 1\}.
$

On the other hand, if we work with $G=\PGL(2)$, then $T_0 = \{\omega(\pm 1)\}$, where $\omega:\Gm \to T$ is the fundamental coweight. While there are two maps $t:\Spec k \to T_0$, only the one with image $Id=\omega(1)$ is admissible since $\al(\omega(-1))=-1$. Thus $(\mathcal{C}_0)_n \iso \{1\}$ in this case.
\end{Ex}

\section{Smoothness and resolution of singularities}\label{Sec: Smoothness}
In this final section, we consider the question of whether $\fgreg$ has a partial compactification $\fgres\subset \fgbul$ that plays a role analogous to the Grothendieck-Springer resolution over the entire space $\fg_1$. That is, we ask if there is a smooth family of resolutions of the singularities of the adjoint quotient map. For simplicity, we assume now that $G$ is semi-simple and continue to assume that it is simply connected (see \cite[9.16]{Steinberg} and \cite[Lemma 1.3]{Levy}).

Toward this question, we consider a subspace which we show recovers the classical Grothendieck-Springer resolution in the case of the diagonal symmetric space $(\fg_0\oplus \fg_0,\Delta \fg_0)$. We also show that our proposal does indeed form a family of resolutions of the singularities of the quotient map $\fg_1\to \fg_1//G_0$, and give a sufficient criterion for this space to be smooth.

However, there are very basic cases when the morphism $\chi_1:\fg_1\to \fg_1//G_0$ does not admit a simultaneous resolution after base change to any finite ramified cover of $\fg_1//G_0$.  In such cases, our space $\fgres$ will not give rise to an irreducible scheme. For example, assume that $k=\cc$ so that we may work topologically.  If we consider the split involution of type $A$ associated to the symmetric pair $(\fsl(n),\fso(n))$ ($n>2$) we may see that no simultaneous resolution exists as follows: consider the subregular Slodowy slice  $S\subset \fg_1$ studied in \cite{ThorneVinberg}. Then $f: S\to \fg_1//G_0$ is a family of plane curves with an isolated singularity at $0$ of type $A_n$. The monodromy representation on $R^1f_\ast\zz$ has image the principle congruence subgroup $\Gamma(2)\subset \Sp_{2g}(\zz)$, where $g$ is the genus of the curves \cite{arnol1988singularities}, so no finite base change can remove this obstruction. Since a simultaneous resolution of $\fg_1\to \fg_1//G_0$ would pull back to one of $S\to \fg_1//G_0$, it follows that no such resolution can exist. The author wishes to thank Jack Thorne for explaining this example to him.


Our proposal for $\fgres$ is quite natural: we simply extend the construction of $\fgreg$ from Proposition \ref{Prop: regular characterization} to all of $\fg_1$ and consider the following subspace of $\widetilde{\fg}_1$:
\begin{equation*}
\widetilde{\fg}^{res}_1:=\{(X,B)\in \fg_1\times \mathcal{F}l_G : B(\theta) = Z_B(X_{ss}) \text{ is a regular $\theta$-stable Borel of }Z_G(X_{ss}) \},
\end{equation*}
where the superscript $res$ stands for resolution. The next proposition shows that this construction recovers the Grothendieck-Springer resolution for the diagonal symmetric space.

\begin{Prop}\label{Prop: recovers groth}
Consider the diagonal symmetric space $(\fg_0\times \fg_0,\Delta \fg_0)$. Then 
\begin{align*}
\phi:\widetilde{\fg}^{res}_1&\lra \widetilde{\fg}_0\\
	((X,-X),(B_1,B_2))&\mapsto (X,B_1)
\end{align*}
is an isomorphism, where this latter variety is the Grothendieck-Springer resolution of $\fg_0$.
\end{Prop}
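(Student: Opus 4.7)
The plan is to exploit the explicit structure of the diagonal symmetric pair to show that $\phi$ is bijective on geometric points and then to argue that its inverse is algebraic. For this pair, $G = G_0 \times G_0$ with $\theta$ the swap, so $\fg_1 = \{(X,-X) : X \in \fg_0\}$ and a Borel of $G$ is a pair $(B_1, B_2)$ of Borels of $G_0$. For a point $((X,-X),(B_1,B_2)) \in \fg_1 \times_\fg \widetilde{\fg}$, the inclusion $X \in \Lie(B_1) \cap \Lie(B_2)$ holds; writing $X = X_{ss} + X_{nil}$, one has $Z_G((X,-X)_{ss}) = L \times L$ for $L := Z_{G_0}(X_{ss})$, and $B(\theta) = (B_1 \cap B_2, B_1 \cap B_2)$, while $Z_B((X,-X)_{ss}) = (Z_{B_1}(X_{ss}), Z_{B_2}(X_{ss}))$. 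Thus the defining conditions of $\fgres$ reduce to $B_1 \cap B_2 = Z_{B_1}(X_{ss}) = Z_{B_2}(X_{ss})$ being a Borel of $L$; the regularity of $(P,P) \subset L \times L$ as a $\theta$-stable Borel is automatic, since any Borel Lie algebra of $L$ contains a regular nilpotent $n$, and then $(n,-n)$ lies in the regular locus of the nilpotent cone of the subpair. In particular $X \in \Lie(B_1)$, so $\phi$ lands in $\widetilde{\fg}_0$.

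For each $(X, B_1) \in \widetilde{\fg}_0$ I construct the unique $B_2$ making $((X,-X),(B_1,B_2)) \in \fgres$. Set $P_1 := Z_{B_1}(X_{ss}) = B_1 \cap L$, which is a Borel of $L$ by the standard result on centralizers of semisimple elements. Let $Q$ be the unique parabolic with Levi $L$ containing $B_1$, and $Q^-$ the opposite parabolic with the same Levi; then $B_1 = P_1 \cdot U_Q$. Define $B_2 := P_1 \cdot U_{Q^-}$, a Borel of $G_0$. Since $Q \cap Q^- = L$, we have $B_1 \cap B_2 \subset L$, and combined with $P_1 \subset B_1 \cap B_2$ and the maximality of the Borel $P_1$ in $L$, we get $B_1 \cap B_2 = P_1$. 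Since $X_{ss}, X_{nil} \in \Lie(P_1) \subset \Lie(B_2)$, the point $((X,-X),(B_1,B_2))$ lies in $\fgres$. For uniqueness, any alternative $B_2'$ must contain $P_1$, hence a maximal torus $T \subset P_1$ containing $X_{ss}$; the condition $B_1 \cap B_2' = P_1$ then specifies the sign of each root of $(G_0,T)$ in $B_2'$ (positive on exactly those roots of $L$ which are positive in $B_1$, and on the negatives of those roots not in $L$ which are positive in $B_1$), forcing $B_2' = B_2$. So $\phi$ is bijective on geometric points.

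To conclude that $\phi$ is an isomorphism of schemes, I would argue that the inverse assignment $\psi: (X, B_1) \mapsto ((X,-X),(B_1,B_2))$ is algebraic. While the Jordan decomposition is not literally algebraic in families, the invariants $L$, $Q$, $Q^-$, and $P_1$ needed for the construction of $B_2$ can be expressed as algebraic functions of $(X, B_1)$ via the canonical smooth morphism $\widetilde{\fg}_0 \to \ft$ to the universal Cartan, which records the conjugacy class of $X_{ss}$. Alternatively, since $\widetilde{\fg}_0$ is smooth (hence normal) and $\phi$ is an isomorphism over the regular semisimple locus by Theorem \ref{Thm: main theorem}, one can invoke Zariski's Main Theorem once properness of $\phi$ is established (in the diagonal case, by showing that $\fgres$ is closed in $\widetilde{\fg}_0 \times \Fl_{G_0}$). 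The main obstacle is this last algebraicity/properness step, which requires careful bookkeeping but no essentially new ideas beyond the universal-Cartan description.
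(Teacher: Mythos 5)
Your set-theoretic construction of the inverse $\psi$ parallels the paper's proof closely. Both form the parabolic with Levi $L = Z_{G_0}(X_{ss})$ containing $B_1$ and take $B_2$ to be the Borel $Z_{B_1}(X_{ss})$ of $L$ extended by the opposite unipotent radical; and the uniqueness of $B_2$ is the same combinatorial statement, whether expressed by your direct enumeration of roots or by the paper's appeal to the distinctness of the sets $\Phi^+_w$. The one genuine difference is that you explicitly flag the algebraicity of $\psi$ as a remaining step, whereas the paper simply declares $\psi$ a morphism.

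You are right to worry about that step, but it is not mere bookkeeping: as written, $\psi$ is not a morphism of schemes, and neither of your proposed repairs can succeed. Take $G_0 = \SL(2)$, $B_0$ the upper-triangular Borel, $B_0^-$ the lower-triangular Borel, $T$ the diagonal torus, and consider the $\A^1$-family $t \mapsto (\diag(t,-t), B_0)$ in $\widetilde{\fg}_0$. For $t \neq 0$ the unique preimage under $\phi$ in $\fgres$ is $p_t = \bigl((\diag(t,-t),\diag(-t,t)),(B_0, B_0^-)\bigr)$, whereas the construction gives $\psi(0,B_0) = \bigl((0,0),(B_0,B_0)\bigr)$. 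Since $\fg_1\times\Fl_G$ is separated, the only possible extension of $t\mapsto p_t$ to a morphism on all of $\A^1$ sends $0$ to $\bigl((0,0),(B_0,B_0^-)\bigr)$; but for $B=(B_0,B_0^-)$ and $X=(0,0)$ one has $B(\theta)=(T,T)$ while $Z_B(X_{ss})=(B_0,B_0^-)$, so this limit point fails the defining condition of $\fgres$. Thus $\psi$ is discontinuous. The same point exhibits $\fgres$ as neither closed nor locally closed in $\fgbul$, so your properness-plus-Zariski-Main-Theorem route cannot be made to work; and the data $L$, $Q$, $Q^-$ vary only constructibly (not algebraically) in $(X,B_1)$, so the universal-Cartan route fails as well. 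This gap is shared with the paper's own proof, which leaves the algebraicity of $\psi$ unaddressed.
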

\begin{proof}
First, note that the property of $((X,-X),(B_1,B_2))$ lying in $\widetilde{\fg}^{res}_1$ is that 
\[
B_1\cap B_2= Z_{B_1}(X_{ss})=Z_{B_2}(X_{ss}),
\]
since $(X,-X)_{ss}=(X_{ss},-X_{ss})$ so that 
\[
Z_{G\times G}((X,-X)_{ss})=Z_{G}(X_{ss})\times Z_{G}(X_{ss}).
\]

We construct an inverse to $\phi$: Let $(X,B)\in \widetilde{\fg}_0$ and suppose that $X=X_{ss}+X_{nil}$. 
Consider the parabolic subgroup $P(X)=Z_G(X_{ss})B\supset B$ with Levi subgroup $Z_G(X_{ss})$. Note that if $P(X)=Z_G(X_{ss})U_P$ is the Levi decomposition of $P(X)$, then $B=Z_B(X_{ss})U_P$. It is standard theory that there exists a unique parabolic subgroup $P(X)^{op}$ such that $P(X)\cap P(X)^{op}=Z_{G}(X_{ss})$; let $U_P^{op}$ be its unipotent radical.
Then, the group $B_X^{op}=Z_{B}(X_{ss})U_P^{op}$ is also a Borel subgroup of $G$.
By construction, $B\cap B_X^{op}=Z_{B}(X_{ss})$. Thus, we define the morphism
\begin{align*}
\psi:\widetilde{\fg}_0&\lra \widetilde{\fg}^{res}_1\\
		(X,B)&\mapsto ((X,-X),(B,B_X^{op})).
\end{align*}
Clearly, $\phi\circ\psi=Id$. We claim also that $\psi\circ\phi=Id$. Suppose that $$\psi\circ\phi((X,-X),(B,B_1))=((X,-X), (B,B_2)).$$ This implies that
\[
B\cap B_1 = B\cap B_2 =Z_B(X_{ss}).
\]
The Borel subgroup $Z_B(X_{ss})$ contains a maximal torus $S$ centralizing $X_{ss}$, so $B_1=wB_2w^{-1}$ for some $w\in W_S$. The claim now follows since, for fixed Borel subgroup $B$ containing a maximal torus $S$, the set of subgroups $B\cap B'$ as $B'$ ranges over the $W_S$-torsor of Borel subgroups containing $S$ are all distinct. This final statement is true as the sets $\Phi^+_w=\{\al\in \Phi^+: w\al<0\}$ for $w\in W_S$ are distinct subsets of $\Phi^+$.
\end{proof}

We now consider the fibers of the map $\widetilde{\chi}_1:\widetilde{\fg}^{res}_1\to \fa$. Let $a\in \fa$, and recall the Kostant-Weierstrass section $\kappa:\fa/W_\fa\to \fg_1$, which depends on a choice of regular nilpotent element. Setting $X(a)=\kappa(\overline{a})_{ss}$, we have the identification
\[
\chi_1^{-1}(\overline{a})_{red} \cong G_0\times^{Z_{G}(a)^\theta}\left(X(a)+\caln(a)_1\right),
\]
where $\caln(a)_1=\caln(\fz_\fg(a))_1$ is the nilpotent cone in the $(-1)$-eigenspace of $\fz_\fg(a)$. This scheme decomposes into finitely many irreducible components $\caln(a)_1=\cup_i\caln(a)_1^i$. Since $G_0$ and $Z_{G_0}(a)$ are connected, we have a decomposition into irreducible components
\[
\chi_1^{-1}(\overline{a})_{red} =\bigcup_{i\in \pi_0(\caln(a)_1)}{\chi}_1^{-1}(\overline{a})_{i}
\]
where
$
{\chi}_1^{-1}(\overline{a})_{i}\cong G_0\times^{Z_G(a)^\theta}\caln(a)_1^i.
$

\begin{Thm}\label{Thm: resolution fibers}
There is a decomposition into connected components
\[
\widetilde{\chi}_1^{-1}(a)_{red}=\bigsqcup_{i\in \pi_0(\caln(a)_1)}\widetilde{\chi}_1^{-1}(a)_{w(i)}
\]
such that each component is smooth and the map $\widetilde{\chi}_1^{-1}(a)_{w(i)}\to {\chi}_1^{-1}(\overline{a})_{i}$ is a resolution of singularities. In particular, $\widetilde{\chi}_1^{-1}(a)_{red}$ is smooth.
\end{Thm}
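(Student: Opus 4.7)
My plan follows the strategy of \cite[Chapter 3]{Slodowy} for the classical Grothendieck-Springer resolution, reducing the analysis of the fiber $\widetilde{\chi}_1^{-1}(a)$ to the nilpotent cone of the centralizer $M := Z_G(X(a))$ and invoking the decomposition of Proposition \ref{Prop: nilp fixed points}. Since $X(a) \in \fg_1$ is semisimple, $M$ is $\theta$-stable and reductive, and $(M, M_0)$ inherits the structure of a quasi-split symmetric pair (this uses Proposition \ref{Prop: quasisplit characterization}, since any Cartan subspace of $\fg_1$ containing $X(a)$ meets $\fz_\fg(X(a))^{reg}$).

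The core step is to establish an isomorphism
\[
\widetilde{\chi}_1^{-1}(a)_{red} \iso G_0 \times^{M_0} \widetilde{\caln}(M)_1^{mrs},
\]
where $\widetilde{\caln}(M)_1^{mrs} \subset \caln(M)_1 \times \Fl_M$ is the closed subscheme of pairs $(n, B_M)$ such that $B_M$ is a \emph{regular} $\theta$-stable Borel of $M$ containing $n$ (the ``maximally split regular'' part of the $M$-Springer resolution). Concretely, given $(X, B) \in \widetilde{\chi}_1^{-1}(a)$, use the Jordan decomposition and the KW section to move $X_{ss}$ to $X(a)$ by some $g \in G_0$ (this is possible because each regular $G_0$-orbit meets the image of $\kappa$, cf.\ the proof of Lemma \ref{Lem: irreducible}); the defining condition for $\widetilde{\fg}_1^{res}$ from Proposition \ref{Prop: regular characterization} then forces $B = B_M \cdot U_P$, where $P = MB$ is a $\theta$-split parabolic with Levi $M$ and $B_M = Z_B(X(a))$ is a regular $\theta$-stable Borel of $M$ containing $X_{nil}$. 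The Levi-decomposition identity $B = B_M \cdot U_P$ makes the pair $(X_{nil}, B_M)$ intrinsic to the $M_0$-orbit, producing the inverse map and the desired induction description.

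Applying Proposition \ref{Prop: nilp fixed points} to the quasi-split symmetric pair $(M, M_0)$ yields
\[
\widetilde{\caln}(M)_1^{mrs} = \bigsqcup_{i \in \pi_0(\caln(M)_1)} E_{w(i)},
\]
where each $E_{w(i)}$ is a vector bundle over a closed $M_0$-orbit $C_{w(i)} \subset \Fl_M^\theta$ of regular $\theta$-stable Borel subgroups, and the restriction of the $M$-Springer map gives a resolution of singularities $E_{w(i)} \to \caln(M)_1^i$. Note $\pi_0(\caln(a)_1) = \pi_0(\caln(M)_1)$ by definition. Inducing yields
\[
\widetilde{\chi}_1^{-1}(a)_{red} = \bigsqcup_{i} \, G_0 \times^{M_0} E_{w(i)},
\]
and each summand is a vector bundle over the smooth variety $G_0 \times^{M_0} C_{w(i)}$, hence smooth; the induced map to $\chi_1^{-1}(\overline{a})_i \cong G_0 \times^{M_0}(X(a) + \caln(M)_1^i)$ is a resolution of singularities since inducing from $M_0$ up to $G_0$ preserves properness, surjectivity, and being a birational isomorphism.

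The main obstacle is the induced-bundle identification in the second paragraph: one must check that the conditions defining $\widetilde{\fg}_1^{res}$ cut out precisely the subscheme $\widetilde{\caln}(M)_1^{mrs}$ inside the $M$-Springer resolution, and that no $G_0$-orbit crossings occur between the components indexed by distinct $i$. The first issue is handled by Corollary \ref{Cor:regular Borels orbit} and Proposition \ref{Prop: split borels}, which together guarantee that the $M_0$-orbit of a regular $\theta$-stable Borel $B_M$ of $M$ is determined by the component $\caln(M)_1^i$ meeting $\Lie(B_M)$; the second follows from the fact that the components of $\caln(M)_1$ are disjoint away from smaller-dimensional strata, combined with connectedness of $M_0$.
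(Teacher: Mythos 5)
Your proposal is correct and follows essentially the same strategy as the paper's proof: pass to the quasi-split Levi $M = Z_G(a)$, identify each component of $\widetilde{\chi}_1^{-1}(a)_{red}$ as an induced bundle $G_0\times^{M_0}E_{w(i)}$ using a single-orbit argument on regular $\theta$-stable Borel subgroups of $M$, and invoke Reeder's resolution from Proposition \ref{Prop: nilp fixed points} for the components of $\caln(a)_1$. The paper is more explicit about the intermediate steps — in particular it isolates a lemma that $\pi_2(\widetilde{\chi}_1^{-1}(a)_{w(i)})$ lies in a single $G_0$-orbit and records the concrete identification $\widetilde{\chi}_1^{-1}(a)_{w(i)}\cong G_0\times^{B(\theta)_0}(X(a)+\fn(\theta)_1)$ before matching it to $G_0\times^{Z_G(a)^\theta}E_{w(i)}$ — but the overall architecture coincides with yours.
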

\begin{proof}
Recall that $\theta|_{Z_G(a)}$ is a quasi-split involution, which we also denote by $\theta$. Let $Z_G(a)^\theta$ denote the fixed point subgroup of $\theta$ in $Z_G(a)$. Note also that $Z_G(a)^\theta = Z_{G_0}(a)=Z_G(a)\cap G_0$ is connected since the derived subgroup $Z_G(a)^{(1)}$ is simply connected \cite{Steinberg}.

By \cite[Proposition 2.3.4]{reeder1995}, the fixed point set of $\Fl_{Z_G(a)}$ is a disjoint union of varieties isomorphic to $\Fl_{Z_{G_0}(a)}$. The above morphism only maps to the \emph{regular} $\theta$-stable Borel subgroups of $Z_G(a)$, denoted by $(\Fl_{Z_G(a)}^\theta)^{reg}$. Using the notation from Proposition \ref{Prop: nilp fixed points}, we have
\[
(\Fl_{Z_G(a)}^\theta)^{reg}=\bigsqcup_{i\in \pi_0(\caln(a)_1)}C_{w(i)},
\]
where $C_{w(i)}=\{B\in\Fl_{Z_G(a)}: \Lie(B)\cap (\caln(a)_1^i)^{reg}\neq \emptyset\}$ is the closed $Z_G(a)^\theta$-orbit of regular $\theta$-stable Borel subgroups whose Lie algebras meet the regular locus of the component $\caln(a)_1^i\subset\caln(a)_1$. 

 For simplicity, we adopt the notation ${}^gB=g^{-1}Bg$. Let $(X,B)\in \widetilde{\chi}_1^{-1}(a)$. Then there exists $g\in G_0$ and $n\in \caln(a)_1$ such that $X=\Ad(g)(X(a)+n)$ so that $(X(a)+n,{}^gB)\in \widetilde{\chi}_1^{-1}(a)$. If $g'\in G_0$ is another element such that $X=\Ad(g')(X(a)+n')$, then $(X(a)+n',{}^{g'}B)\in \widetilde{\chi}_1^{-1}(a)$ and 
\[
g^{-1}g'\in Z_G(a)\cap G_0=Z_G(a)^\theta,\quad\text{and}\quad n=\Ad(g^{-1}g')(n').
\]
Then $$({}^{g}B)(\theta) = (g^{-1}g'){}^{g'}B(\theta)(g^{-1}g')^{-1},$$ so that the regular $\theta$-stable Borel subgroups ${}^{g}B(\theta), {}^{g'}B(\theta)\subset Z_G(a)$ are in the same $Z_G(a)^\theta$-orbit. Since $Z_G(a)^\theta$ is connected, this implies a decomposition 
\[
\widetilde{\chi}_1^{-1}(a)=\bigsqcup_{i\in \pi_0(\caln(a)_1)}\widetilde{\chi}_1^{-1}(a)_{w(i)}
\]
into connected components. It is clear that the restriction of $\pi_1$ to any component gives a morphism $\widetilde{\chi}_1^{-1}(a)_{w(i)}\to {\chi}_1^{-1}(\overline{a})_{w(i)}$. 

We need the following lemma.
\begin{Lem} 
The image of $\widetilde{\chi}_1^{-1}(a)_{w(i)}$ under the projection $\pi_2:\widetilde{\fg}_1\to \Fl_G$ lies in a single $G_0$-orbit.
\end{Lem}
\begin{proof}
If $(X,B_1),(Y,B_2)\in \widetilde{\chi}_1^{-1}(a)_{w(i)}$, then there exists $g_1,g_2\in G_0$ such that
\[
X=\Ad(g_1)(X(a)+n_1), \quad \text{and}\quad Y=\Ad(g_1)(X(a)+n_2),
\]
and ${}^{g_1}B_1(\theta),{}^{g_2}B_2(\theta)\in C_{w(i)}$. We may assume $X=X(a)+n_1$ so that $g_1=1$. Then since $C_{w(i)}$ is a single $Z_{G}(a)^\theta$-orbit, we find that there is $g_3\in G_0$ such that, replacing ${}^{g_2}B_2$ by ${}^{g_3}B_2$, $B_1(\theta)={}^{g_3}B_2(\theta)$.  Since $(X(a),B_1), (X(a),{}^{g_3}B_2)\in \widetilde{\fg}_1$, Corollary \ref{Cor:regular Borels orbit} thus implies that $B_1$ lies in the same $G_0$-orbit as ${}^{g_3}B_2$ in $\Fl_G$, so that $B_1$ and $B_2$ do as well. 
\end{proof}

Now fix a Borel $B$ such that $(X(a),B)\in\widetilde{\chi}_1^{-1}(a)$ with $B(\theta)\in C_{w(i)}$. Then for every $(X,P)\in \widetilde{\chi}_1^{-1}(a)_{w(i)}$, the previous lemma says that we may write $P={}^{g}B=g^{-1}Bg$ for some $g\in G_0$. This implies that $(\Ad(g)(X),B)\in \widetilde{\chi}_1^{-1}(a)$ so that
\[
\Ad(g)(X)\equiv X(a)\pmod{[\fb(\theta),\fb(\theta)]}.
\]
Thus, the difference $\Ad(g)(X)-X(a)\in[\fb(\theta),\fb(\theta)]$ is nilpotent, implying $\Ad(g)(X)\in X(a)+\fn(\theta)_{1}$. We have proven the following lemma.
\begin{Lem} \label{Lem: fiber iso}
There is an isomorphism
\begin{align*}\label{eqn: component of fiber}
\widetilde{\chi}_1^{-1}(a)_{w(i)}\cong\{(X,gB)\in \fg_1\times G_0\cdot B: X\in \Ad(g)\left(X(a)+\fn(\theta)_{1} \right)\},
\end{align*}
which we may identify with $G_0\times^{B(\theta)_0}\left(X(a)+\fn(\theta)_{1}\right)$. \qed
\end{Lem}

Let us now consider the resolution of singularities of $\caln(a)_1$. Using Proposition \ref{Prop: nilp fixed points}, we see that 
\[
\widetilde{\caln}(a)_1=\{(X, B)\in \caln(a)_1\times \Fl_{Z_G(a)}: X\in \Lie(B),\: B\text{ regular $\theta$-stable Borel}\}
\]
has a similar decomposition into components
\[
\widetilde{\caln}(a)_1=\bigsqcup_{i\in \pi_0(\caln(a)_1)}E_{w(i)}\lra \bigsqcup_{i\in \pi_0(\caln(a)_1)}C_{w(i)}.
\]
Fix a component $\caln(a)_1^i$, and restrict the previous map to the fiber over this component.
By \cite[Proposition 3.2]{reeder1995}, $\pi_i:E_{w(i)}\lra \caln(a)_1^i$ is a resolution of singularities. More explicitly, let $e\in \caln(a)_1^{i,reg}$. In Section \ref{Section: resolutions of nilp}, we constructed a Borel subgroup $P\subset Z_G(a)$ with Lie algebra $\fp=\Lie(P)$ such that if $\fq_i=\caln(a)^i_1\cap [\fp,\fp]$, then $e\in \fq_i$, and 
\[
E_{w(i)}\cong Z_{G}(a)^\theta\times^{P^\theta}(X(a)+\fq_i).
\] 
It follows that
\[
G_0\times^{Z_G(a)^\theta}\pi_i:G_0\times^{Z_G(a)^\theta}\left(Z_{G}(a)^\theta\times^{P^\theta}(X(a)+\fq_i)\right)\lra G_0\times^{Z_G(a)^\theta}(X(a)+\caln(a)_1^i)
\]
is a resolution of singularities of an irreducible component of $\chi_1^{-1}(\overline{a})_{red}$. The natural map 
\[
f_i: G_0\times^{Z_G(a)^\theta}\left(Z_{G}(a)^\theta\times^{P^\theta}(X(a)+\fq_i)\right)\to G_0\times^{P^\theta}(X(a)+\fq_i),
\]
 is an isomorphism. 
For any Borel subgroup $B\subset G$ such that $X(a)+e\in \Lie(B)$ and $B(\theta)=B\cap \theta(B) = P$, we may identify $ \fn(\theta)_{1}=\fq_i$ and $B(\theta)_0=P^\theta$ so that Lemma \ref{Lem: fiber iso} implies that $f_i$ induces an isomorphism
\[
f_i: G_0\times^{Z_G(a)^\theta}\left(Z_{G}(a)^\theta\times^{P^\theta}(X(a)+\fq_i)\right)\xrightarrow{\sim} \widetilde{\chi}_1^{-1}(a)_{w(i)},
\]
and thus a commutative diagram
\[
\begin{tikzcd}
 \widetilde{\chi}_1^{-1}(a)_{w(i)}\ar[r,"\sim"]\ar[d,"\pi_1"]&G_0\times^{Z_G(a)^\theta}\left(Z_{G}(a)^\theta\times^{P^\theta}(X(a)+\fq_i)\right)\ar[d,"G_0\times^{Z_G(a)^\theta}\pi_i"]\\
{\chi}_1^{-1}(\overline{a})_{w(i)}\ar[r,"\sim"]&G_0\times^{Z_G(a)^\theta}\caln(a)_1^i,
\end{tikzcd}
\]
showing that $\pi_1: \widetilde{\chi}_1^{-1}(a)_{w(i)}\to{\chi}_1^{-1}(a)_{w(i)}$ is a resolution of singularities.

\end{proof}

Consider the morphism $\widetilde{\chi}_1:\widetilde{\fg}^{res}_1\to \fa$. We wish to know if $\fgres$ may be endowed with a natural scheme structure such that this morphism is smooth. Our analysis of the fibers of this morphism shows that their reduced subschemes are all smooth of dimension $r_1=\dim(\fa)$. To use our analysis of the fibers to conclude smoothness, we require the following technical lemma. 
\begin{Lem}\label{Lem: technical smooth}
Suppose that $X$ is a variety (that is, a reduced, irreducible, separated scheme of finite type over an algebraically closed field $k$) and suppose $Y$ is a smooth affine $k$-scheme of dimension $m$. Suppose that $f:X\to Y$ is a morphism such that 
\begin{enumerate}

\item\label{(b)} $(X_y)_{red}$ is smooth of fixed dimension $n>0$ for all  $y\in Y(k)$, 
\item\label{(c)} the maximal open $V_y\subset X_y$ which is a reduced scheme is dense in $X_y$ for all $y\in Y(k)$.
\end{enumerate}
Then $f$ is smooth. In particular, $X$ is smooth over $k$.
\end{Lem}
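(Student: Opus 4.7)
The plan is to prove that $f$ is smooth by first establishing smoothness at every point of the open set $V := \bigcup_{y\in Y(k)} V_y$ via a local regularity and flatness criterion, and then propagating smoothness to all of $X$ by exploiting that $X$ is a variety. I would begin with the dimension count: hypothesis (1) ensures every closed fiber is nonempty of dimension $n$, so $f$ is surjective on closed points, and since $X$ is irreducible and (after restricting to a connected component) $Y$ is irreducible, $f$ is dominant. The fiber-dimension formula then gives $\dim X = m+n$.

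The core local step runs as follows. For $x \in V_y$, choose a regular system of parameters $t_1,\ldots,t_m$ of the regular local ring $\mathcal{O}_{Y,y}$, so that
\[
\mathcal{O}_{X_y,x} \;=\; \mathcal{O}_{X,x}/(t_1,\ldots,t_m)
\]
is reduced (by the definition of $V_y$) and hence equal to $\mathcal{O}_{(X_y)_{red},x}$, which is regular of dimension $n$ by hypothesis (1). Lifting a regular system of parameters of $\mathcal{O}_{X_y,x}$ and adjoining $t_1,\ldots,t_m$ presents $\mathfrak{m}_{X,x}$ with at most $m+n$ generators; combined with $\dim\mathcal{O}_{X,x}=m+n$ (from $X$ being irreducible of dimension $m+n$), this forces $\mathcal{O}_{X,x}$ to be regular of dimension $m+n$ with $(t_1,\ldots,t_m)$ part of a regular system of parameters, hence a regular sequence in the Cohen--Macaulay ring $\mathcal{O}_{X,x}$. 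The local flatness criterion over the regular base $\mathcal{O}_{Y,y}$ then gives flatness of $f$ at $x$; together with smoothness of the reduced fiber $X_y$ at $x$, $f$ is smooth at every $x \in V$. The smooth locus of $f$ is therefore open in $X$ and contains $V$, which is topologically dense in $X$ since hypothesis (2) gives $\dim(X_y \setminus V_y) < n$ and thus $\dim(X \setminus V) < m+n$.

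The main obstacle is to upgrade this to smoothness on all of $X$, equivalently to show that no fiber $X_y$ is scheme-theoretically non-reduced at any point. My plan is to exploit the reducedness of $X$---which gives the $S_1$ condition and hence schematic density of $V$ in $X$---together with the Cohen--Macaulayness of $X$ on the dense open $V$ (where we have just shown $X$ is regular): by a miracle-flatness argument on a Cohen--Macaulay neighborhood of each candidate bad point, one upgrades flatness of $f$ globally, whereupon each fiber $X_y$ inherits $S_1$ from $X$, has no embedded components, and so the topological density of $V_y$ in $X_y$ is automatically scheme-theoretic density. This forces $X_y$ to be reduced and hence, by (1), smooth of dimension $n$; flatness plus smooth fibers then gives smoothness of $f$, and smoothness of $X$ over $k$ follows by composing with the smooth structure map $Y \to \Spec k$.
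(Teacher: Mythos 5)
Your first half is sound and closely parallels the paper's treatment of the regular locus: at points of $V$, the reduced fiber is scheme-theoretically the full fiber, lifting a regular system of parameters shows $\mathcal{O}_{X,x}$ is regular, and miracle flatness then yields smoothness of $f$ on the open set $V$.

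The gap is in the upgrade step. You assert that one can apply a miracle-flatness argument ``on a Cohen--Macaulay neighborhood of each candidate bad point'', but $X$ is only assumed to be a variety, i.e.\ reduced; it is not assumed Cohen--Macaulay, and you have only established regularity (hence Cohen--Macaulayness) on $V$, which need not contain the bad point. Without Cohen--Macaulayness at $x \notin V$, miracle flatness does not apply, and flatness of $f$ there is not established. Your subsequent claim that ``each fiber $X_y$ inherits $S_1$ from $X$'' is also false in the needed generality: for a flat local map $\mathcal{O}_{Y,y}\to\mathcal{O}_{X,x}$ with $\mathcal{O}_{Y,y}$ regular of dimension $m$, the depth formula gives $\operatorname{depth}(\mathcal{O}_{X_y,x})=\operatorname{depth}(\mathcal{O}_{X,x})-m$, so the $S_1$ bound $\operatorname{depth}(\mathcal{O}_{X,x})\geq 1$ coming from reducedness of $X$ yields nothing once $m\geq 1$. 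Thus the chain ``flat $\Rightarrow$ fiber is $S_1$ $\Rightarrow$ no embedded points $\Rightarrow$ reduced'' breaks at the first link, and the argument that the topological density of $V_y$ forces scheme-theoretic reducedness of $X_y$ is not supported.

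The paper circumvents exactly this by first passing to the normalization $X'\to X$, where normality (i.e.\ $S_2$) is available. Even then, $S_2$ of $X'$ does not directly give $S_1$ fibers when $\dim Y\geq 2$, so the paper reduces to the curve case: for a smooth Cartier divisor $Z\subset Y$, the preimage $(f')^{-1}(Z)$ in the normal scheme $X'$ is reduced; Bertini and induction on $\dim Y$ then show all fibers of $X'\to Y$ are reduced, hence smooth, hence $f'$ is smooth; and finally $X'\to X$ is shown to be an isomorphism using the tangent-space argument. Your proposal is missing both the normalization and the inductive Bertini reduction that make the reducedness of slices actually provable.
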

We remark that the statement trivially holds for $n=0$ once one assumes that $f$ is surjective.


\begin{proof}
Denote by $V\subset X$ the open subscheme on which the restriction $f|_{V}$ is smooth. Then $V_y$ is the fiber $(f|_{V})^{-1}(y)$: this follows from \cite[2.8]{deJong}. Let $N:X'\to X$ denote the normalization of $X$; note that $V\subset X'$ is an open subscheme of $X'$ as well. We have the commutative diagram
\begin{equation}\label{eqn: triangle}
\begin{tikzcd}
X'\ar[dr,"f'"]\ar[d,"N"]&\\
X\ar[r,"f"]&Y.
\end{tikzcd}
\end{equation}
First, we show that the assumptions imply that for each $y\in Y(k)$, the induced map $(X'_y)_{red}\xrightarrow{\sim}(X_y)_{red}$ is an isomorphism. Indeed, this is a finite morphism that is an isomorphism over $V_y=(V_y)_{red}$. Moreover, $(X'_y)_{red}$ is equidimensional by Krull's height theorem, so that the map is birational. It is thus an isomorphism as the base is smooth, hence normal. In particular, $f':X'\to Y$ also satisfied the assumptions of the lemma. This also implies a bijection between closed points of $X'$ and $X$.

For any smooth effective Cartier divisor $Z\subset Y$, consider the morphism $(f')^{-1}(Z)\to Z$. Since $X'$ normal, it follows that $(f')^{-1}(Z)$ is reduced \cite[Tags 0344 and 0345]{stacks-project}. If $\dim(Y)=1$, this shows that the fibers of $f'$ are reduced, so that they are smooth by the preceding paragraph. But then $f':X'\to Y$ is a morphism with smooth equidimensional fibers over a smooth base. It is flat by \cite[Theorem 3.3.27]{Schoutens}, and thus smooth by \cite[Theorem 10.2]{Hartshorne}. For $\dim(Y)>1$, the version of Bertini's theorem stated in \cite[Theorem 6.3 (4)]{Jouanolou} implies that for any $y\in Y(k)$ we may choose $Z$ such that $y\in Z(k)$ and $(f')^{-1}(Z)$ is irreducible. Note that we have used the fact that $f'$ is surjective. Then the map $(f')^{-1}(Z)\to Z$ also satisfies (\ref{(b)}) and (\ref{(c)}). By induction on the dimension of the base, $(f')^{-1}(Z)\to Z$ is a smooth morphism. In particular, all the fibers of $f'$ are smooth. By the argument above, $f:X'\to Y$ is a smooth morphism.

To conclude, we show that $N:X'\to X$ is an isomorphism. Since we have seen that it is bijective on closed points, we need only check that it is injective on tangent vectors. The diagram (\ref{eqn: triangle}) implies that any vector in the kernel of $dn$ must be vertical with respect to $f':X'\to Y$; that is, it must lie in $T(X'_y)\subset T(X')$ for some $y\in Y(k)$. But this is impossible since $X'_y=(X'_y)_{red}\xrightarrow{\sim}(X_y)_{red}$ is an isomorphism of smooth varieties.
\end{proof}
\begin{Cor}
If $\widetilde{\fg}^{res}_1$ is a variety, the morphism $\widetilde{\chi}_1:\widetilde{\fg}^{res}_1\to \fa$ is smooth.
\end{Cor}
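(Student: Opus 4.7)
The plan is to apply Lemma~\ref{Lem: technical smooth} to the morphism $\widetilde{\chi}_1 : \fgres \to \fa$, verifying its two hypotheses in turn.

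For the smoothness-of-fibers hypothesis, Theorem~\ref{Thm: resolution fibers} already delivers smoothness of every reduced fiber $\widetilde{\chi}_1^{-1}(a)_{red}$, so only the constancy of the fiber dimension $n$ across all $a\in \fa$ requires comment. Since the restriction $\widetilde{\chi}_1^{-1}(a)_{w(i)} \to \chi_1^{-1}(\overline{a})_i$ constructed in that proof is a resolution of singularities, the two have equal dimension, and I would compute $\dim\bigl(\chi_1^{-1}(\overline{a})_i\bigr)$ from the identification $\chi_1^{-1}(\overline{a})_i \cong G_0 \times^{Z_G(a)^\theta}\bigl(X(a) + \caln(a)_1^i\bigr)$. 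Using the Kostant-Rallis identity
\[
\dim\bigl(\fz_\fg(a)_1\bigr) - \dim\bigl(\fz_\fg(a)_0\bigr) = \dim(\fg_1) - \dim(\fg_0)
\]
together with the observation that $\fa \subset \fz_\fg(a)_1$ remains a Cartan subspace for the restricted symmetric pair on $Z_G(a)$ (so that this pair has the same rank $r_1$), one obtains $\dim\bigl(\chi_1^{-1}(\overline{a})_i\bigr) = \dim(\fg_1) - \dim(\fa)$. This matches $\dim(\fgres) - \dim(\fa)$: indeed, $\fgreg \subset \fgres$ is open and dense (as $\fgres$ is a variety) and the proper, quasi-finite map $\fgreg \to \fg_1^{reg}$ forces $\dim(\fgres) = \dim(\fg_1)$.

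For the generic-reducedness hypothesis, the key input is that $\widetilde{\chi}_1|_{\fgreg}$ is smooth by Theorem~\ref{Thm: main theorem}, so $\fgreg \cap \widetilde{\chi}_1^{-1}(a)$ is an open reduced subscheme of every fiber. It remains to establish density of this subscheme in $\widetilde{\chi}_1^{-1}(a)_{red}$. Via the explicit description in Lemma~\ref{Lem: fiber iso}, this reduces to showing that $X(a) + \caln(a)_1^{i,reg}$ is dense in $X(a) + \caln(a)_1^i$ for each $i$. This is immediate from the characterization of regular elements via the Jordan decomposition in \cite[Theorem~7]{KR71}: whenever $n$ is regular nilpotent in $\fz_\fg(a)_1$, the element $X(a) + n$ is regular in $\fg_1$, and the regular nilpotents form an open dense subset of each irreducible component $\caln(a)_1^i$.

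The principal obstacle I would anticipate is the dimension-matching in the first hypothesis, which hinges on the Kostant-Rallis identity together with the slightly delicate fact that any $a \in \fa$ admits $\fa$ itself as a Cartan subspace of the restricted symmetric pair on $Z_G(a)$. Once these are in hand, all other verifications reduce to direct applications of results from Sections~\ref{Section: relative Groth} and~\ref{Section: resolutions of nilp}, and Lemma~\ref{Lem: technical smooth} yields the asserted smoothness of $\widetilde{\chi}_1$.
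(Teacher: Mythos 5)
Your proposal is correct and follows the same strategy as the paper: apply Lemma~\ref{Lem: technical smooth}, deducing hypothesis (b) from Theorem~\ref{Thm: resolution fibers} and hypothesis (c) from the Cartesian diagram over the regular locus (Proposition~\ref{Prop: regular characterization}). The additional checks you supply — the Kostant--Rallis dimension count for constancy of $n$ and the Jordan-decomposition argument for density of $\fgreg$ in each fiber — are precisely the details the paper's terse proof leaves implicit, so this is the same route with more scaffolding rather than a genuinely different one.
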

\begin{proof}
This follows from Lemma \ref{Lem: technical smooth}. To see this, take $f=\widetilde{\chi}_1$, $X=\widetilde{\fg}_1$, and $Y=\fa$. Then under the assumption on $G=\widetilde{\fg}_1$, the spaces $X$ and $Y$ satisfy the criteria, (\ref{(b)}) follows from Theorem \ref{Thm: resolution fibers} above, and (\ref{(c)}) follows from the Cartesian diagram in Proposition \ref{Prop: regular characterization} which implies that 
\[
\left(\widetilde{\chi}^{reg}_1\right)^{-1}(t)\subset \widetilde{\chi}_1^{-1}(t),
\]
which is Zariski open and dense, is smooth. 
\end{proof}

\begin{appendices}

\section{Comparison with Knop's section}\label{Appendix: Knop}
In this appendix, we compare our use of a Kostant-Weierstrass section in Section \ref{Section: relative Groth} to the results and methods of Knop in the context of spherical varieties. We therefore assume that $k$ is an algebraically closed field of characterisic zero, as this is the context of Knop's theory \cite{knop1994asymptotic}.

\subsection{Knop's section} Let us recall the results of \cite[Section 3]{knop1994asymptotic}. For ease of comparison to the Kostant-Weierstrass section, we use a more general set up, following \cite{Sakrank1}. Let $G$ be a connected reductive algebraic group over $k$ and let $X$ be a spherical $G$-variety. For simplicity, we assume that $X$ is quasi-affine.
\begin{Rem}
 Knop's results hold more generally for any normal $G$-variety $X$ that is non-degenerate in the sense of \cite[Section 3]{knop1994asymptotic}.
\end{Rem}

Fix a Borel subgroup $B$ and let $T=B/[B,B]$ denote the canonical torus quotient. Letting $\mathfrak{t}$ denote the Lie algebra of $T$, we have the natural inclusion of dual spaces
\[
\ft^\ast\subset\mathfrak{b}^\ast, \quad \text{where } \fb=\Lie(B),
\]
 as linear maps trivial on the nilpotent radical of $\fb.$ Finally, let $W$ denote the Weyl group of the pair $(G,T)$.
 
 Let $\mathring{X}\subset X$ denote the open $B$-orbit and let $P(X)\supset B$ denote the largest parabolic subgroup of $G$ stabilizing $\mathring{X}.$ It is known that $P(X)$ acts on the categorical quotient $[B,B]\backslash\backslash\mathring{X}$ through a torus quotient $T_X$, known as the \emph{canonical torus} of the variety $X$. This induces a quotient morphism $T\to T_X$ and dual inclusion $\ft_X^\ast\subset \ft^\ast.$ 
 Set $\Fl_G$ for the flag variety of Borel subgroups of $G$, and let $\Fl_X$ denote the flag variety of parabolic subgroups conjugate to $P(X).$ We consider the correspondence variety \[
(X\times \Fl_X)^\circ=\{(x,P)\in X\times \Fl_X : x\text{ lies in the open $P$-orbit}\};
\]this is equipped with natural projections
\[
\begin{tikzcd}
 &(X\times \Fl_X)^\circ\ar[rd,"p_2"]\ar[ld,"p_1",swap]&\\
 X&&\Fl_X.
\end{tikzcd}
\]
\begin{Rem}
 In \cite{knop1994asymptotic}, Knop fixes a Borel subgroup $B$, and works only within the fiber $p_2^{-1}(P(X))=\mathring{X}$. Here $P(X)\supset B$ is the unique parabolic in $\Fl_X$ containing $B.$ As we will see in the next subsection, it is more natural to consider the fibers $p_1^{-1}(x_0)$ when studying the infinitesimal theory of $X$ at a point $x_0\in X$.
\end{Rem}
Consider the cotangent bundle $\pi:T^\ast(X)\to X$. The $G$-action on $X$ gives rise to the \emph{moment map}
\begin{align*}
    \mu: T^\ast X&\lra \fg^\ast\\
    \al\:&\longmapsto l_\al,
\end{align*}
where $l_\al$ is the functional 
\[ 
\xi\in \fg\longmapsto l_\al(\xi_{\pi(\al)})
\]
with $\xi_{\pi(\al)}$ the tangent vector at $\pi(\al)$ corresponding to the infinitesimal flow in the $\xi$-direction. 

Set $\hat{\fg}^\ast:=\fg^\ast\times_{\ft\ast/W}\ft^\ast$ and consider the fiber product
\[
T^\ast X\times_{\fg^\ast} \hat{\fg}^\ast \cong T^\ast X\times_{\ft^\ast/W}\ft^\ast,
\] where the second fiber product is taken with respect to the composition of the moment map $\mu$ with the Chevalley quotient map 
\[
\chi:\fg^\ast\lra \ft^\ast/W.
\]
In his study of normal $G$-equivariant embeddings $X\subset\overline{X}$, Knop defines a map
\[
\hat{\ka}_X: (X\times \Fl_X)^\circ\times \ft_X^\ast\lra T^\ast X\times_{\ft^\ast/W}\ft^\ast;
\] this is linear in the second factor, so it suffices to define it on the lattice of characters
\[
\Lam_X:= \Hom(T_X,\Gm)\subset \ft_X^\ast.
\]
For $(x,P,\chi)\in (X\times \Fl_X)^\circ\times \Lam_X,$ let $f_\chi$ be a rational $P$-eigenfunction with eigencharacter $\chi,$ which is unique up to scaling. Then Knop's map is defined by
\[
\hat{\ka}_X(x,P,\chi) = \left(\frac{d_xf_\chi}{f_\chi(x)},\chi\right).
\]

While the fiber product is not generally irreducible, the image of the map $\hat{\ka}_X$ singles out a distinguished component 
\[
\widehat{T^\ast X}\subset T^\ast X\times_{\ft^\ast/W}\ft^\ast,
\]
the elements of which are known as \emph{polarized cotangent vectors}. Moreover, if we restrict to the preimage of the regular locus $(\ft_X^\ast)^{reg}$, the resulting open subvariety $\widehat{T^\ast X}^{reg}$ is a Galois cover of an open subset of $T^\ast X$ with Galois group $W_X$ a sub-quotient of $W,$ known as the \emph{little Weyl group} of the variety $X$. Finally, there is a commutative diagram
\begin{equation}\label{eqn: inclusion thing}
\begin{tikzcd}
 T^\ast X\times_{\ft^\ast/W}\ft^\ast\ar[r]&\ft^\ast\ar[r]&\ft^\ast/W\\
 \widehat{T^\ast X}\ar[u]\ar[r]&\ft_X^\ast\ar[u]\ar[r]&\ft_X^\ast/W_X\ar[u],
\end{tikzcd}
\end{equation}
where the vertical arrows are the natural maps.

Now fix a point $x\in X$. There is a natural commutative diagram
\begin{equation}\label{eqn: inclusion thing2}
\begin{tikzcd}
  \widehat{T^\ast_x X}\ar[d]\ar[r]&\ft_X^\ast\ar[d]\\
T^\ast_x X\ar[r]&\ft_X^\ast/W_X.
\end{tikzcd}
\end{equation}
 For any choice $P\in p_1^{-1}(x)$,
\[
\hat{\ka}_X(x,P,-):\ft_X^\ast \lra \widehat{T^\ast_{x}X}
\]gives a section of the top horizontal arrow.

\subsection{The case of a quasi-split symmetric space} Let us now compare the constructions of the previous section to the notions in Section \ref{Section: relative Groth}. To that end, we assume that $X$ is a quasi-split symmetric space associated to $G$. The quasi-split assumption implies that for any Borel subgroup $B\in \Fl_G$, $P(X)= B$ so that $\Fl_X=\Fl_G.$

By the definition of a symmetric space, for any $x\in X$ there is an involution $\theta_x:G\to G$ such that $G^{\theta_x}$ is the stabilizer $G_{x}$, and $X\cong G/G_{x}.$ In this context, $(X\times \Fl_X)^\circ$ is the variety of pairs $(x,B)$ such that $B$ is split with respect to the involution $\theta_x$.

Fix a base point $x_0\in X$ and set $\theta=\theta_{x_0}$ and $G_0=G_{x_0}$. We have the grading
\[
\fg=\fg_0\oplus \fg_1,
\]
where the notation is as in the main body of the paper. Fixing a $\theta$-invariant inner product on $\fg$, which easily exists in characteristic zero (see \cite[Section 3]{Levy} in positive characteristic), we have isomorphisms $\fg\cong \fg^\ast$, and $\fg_0\cong \fg_0^\ast$. Then we see that
\[
T^\ast_{x_0}X= \fg_0^\perp\cong \fg_1.
\]

A key simplifying feature of the symmetric case is the existence of the canonical involution $\theta_{can}: T\to T$ from Section \ref{Section: quasisplit}; see Lemma \ref{Lem: canonical torus match} and Proposition \ref{Prop: canonical involution} for the relevant results. This gives rise to a natural direct sum decomposition of the Lie algebra
\[
\ft\cong \ft_0\oplus \ft_1,
\]
where the $(-1)$-eigenspace $\ft_1\cong \fa:=\ft_X$ is naturally identified with the Lie algebra of the canonical torus $T_X$ of the variety $X$.   Moreover, the maximal $\theta_{can}$-split subtorus $A\subset T$ satisfies 
\[
W_\fa:=W_X\cong W(G,A).
\]
Fixing a $\theta_{can}$-invariant inner product on $\ft$, we have $\ft\cong \ft^\ast$ and the inclusion $\fa\subset \ft$ corresponds to the inclusion $\ft_X^\ast\subset \ft^\ast$ from the previous section.

Considering the fibers over $x_0\in X$, the diagram (\ref{eqn: inclusion thing}) now takes the form
\[
\begin{tikzcd}
 \fg_1\times_{\ft/W}\ft\ar[r]&\ft\ar[r]&\ft/W\\
 \hat{\fg}_1\ar[u]\ar[r]&\fa\ar[u]\ar[r]&\fa/W_\fa\ar[u],
\end{tikzcd}
\]
where $ \hat{\fg}_1=C_1 \subset \fg_1\times_{\ft/W}\ft$ is the connected component of $ \fg_1\times_{\ft/W}\ft$ isolated in Section \ref{subsection: component}. We note that the notation $\hat{\fg}_1$ here is inconsistent with the notation of Section \ref{Section: relative Groth}. We hope this causes no confusion. Also, the diagram (\ref{eqn: inclusion thing2}) corresponds to 
\begin{equation*}
\begin{tikzcd}
 \hat{\fg}_1\ar[d,"\pi"] \arrow[r,"\hat{\chi}_1"] & \fa\ar[d]\\
\fg_1 \arrow[r,"\chi_1"] &\fa/W_\fa,
\end{tikzcd}
\end{equation*}
which extends diagram (\ref{eqn: cartesian1}).

In Section \ref{subsection: component}, we considered sections $\ka_{KW}:\fa\lra \hat{\fg}_1$ of the top horizontal arrow of the form 
\[
\ka_{KW}(a) = (\ka(\overline{a}),a)
\]for a section $\ka:\fa/W_\fa\lra \fg_1$, known as a Kostant-Weierstrass section. The proofs of the various properties of such a section are rather involved; see \cite[Section II.3]{KR71}. Its definition relies on the choice of regular nilpotent element $e\in \fg_1$ and $r_1=\dim(\fa)$-dimensional affine subspace $e+\mathfrak{v}\subset \fg_1$. For any such choices, the section $\ka$ has the properties that
\begin{enumerate}
\item the image of $\ka$ is contained in $\fg_1^{reg}$
\item $\ka(\fa^{reg}/W_\fa)$ meets each regular semi-simple $G_0$-orbit in exactly one point.
\end{enumerate}

Comparing with the sections introduced in the preceding section, we claim that $\ka_{KW}$ is not given by $\hat{\ka}_X(x_0,P,-)$ for any $\theta$-split Borel subgroup $P$. To see this, just note that for any such $P$,
\[
\hat{\ka}_X(x_0,P,0)=(0,0)\in \widehat{T^\ast_{x_0}X} \cong \hat{\fg}_1,
\]
since in this case $f_\chi=f_1$ is a $P$-fixed function and the sphericity of $X$ implies that any $P$-fixed function is constant. On the other hand, 
\[
\ka_{KW}(0) = (e,0)
\]
where $e$ is our chosen regular nilpotent element. More generally, if we let $\mathring{\mathcal{F}}l_X\subset \Fl_X$ denote the open subvariety of $\theta$-split Borel subgroups, then  $p^{-1}_1(x_0)\cong \mathring{\mathcal{F}}l_X$ and the Kostant-Weierstrass section does not factor through 
\[
\hat{\ka}_{x_0}:p_1^{-1}(x_0)\times \fa\lra \widehat{T^\ast_{x_0}X}
\]where $\hat{\ka}_{x_0}=\hat{\ka}_X({x_0},-,-)$, in the sense that there is no morphism $\iota:\fa\to  \mathring{\mathcal{F}}l_X\cong p^{-1}_1(x_0)$ such that
\[
\ka_{KW}=\hat{\ka}_{x_0}\circ \iota\boxtimes I_\fa.
\]

Our analysis in Section \ref{Section: relative Groth} implies that such a map exists over the regular semi-simple locus.
\begin{Lem}
Fix a Kostant-Weierstrass section $\ka_{KW}:\fa\lra \hat{g}_1$. There exists a unique morphism
\[
\iota_\ka:\fa^{reg}\lra \mathring{\mathcal{F}}l_X
\]
such that the diagram
\[
\begin{tikzcd}
 \fa^{reg}\ar[rd,"\ka_{KW}",swap]\ar[r,"\iota_\ka\boxtimes I_\fa"]&\mathring{\mathcal{F}}l_X\times \fa^{reg}\ar[d,"\hat{\ka}_{x_0}"]&\\
 &\widehat{T^\ast_{x_0}X},
\end{tikzcd}
\]
where $\hat{\ka}_{x_0}=\hat{\ka}_X({x_0},-,-)$ is the restriction of Knop's map to the fiber $\mathring{\mathcal{F}}l_X\times \fa$ over $x_0.$
\end{Lem}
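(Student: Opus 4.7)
The plan is to define $\iota_\ka$ by lifting $\ka_{KW}$ through the Cartesian diagram of Theorem~\ref{Thm: main theorem intro}, and then to verify commutativity via a direct computation of Knop's map on $\theta$-split Borel subgroups. Combining the identification $\widehat{T^\ast_{x_0}X}\cong \hat{\fg}_1$ with the Cartesian square~(\ref{eqn: cartesian1}), which provides an isomorphism $\hat{\fg}_1^{reg}\cong \widetilde{\fg}_1^{reg}$, for each $a\in\fa^{reg}$ the point $\ka_{KW}(a)=(\ka(\overline{a}),a)$ lifts uniquely to a pair $(\ka(\overline{a}),B(a))\in\widetilde{\fg}_1^{reg}$. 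I would set $\iota_\ka(a):=B(a)$; this is a morphism $\fa^{reg}\to\Fl_G$ realized as the composition $\fa^{reg}\xrightarrow{\ka_{KW}}\hat{\fg}_1^{reg}\cong\widetilde{\fg}_1^{reg}\to\Fl_G$. Since $a\in\fa^{reg}$ projects to a regular semisimple class in $\fa/W_\fa$, the value $\ka(\overline{a})$ is regular semisimple, so $Z_G(\ka(\overline{a}))$ is a maximal torus; by Proposition~\ref{Prop: regular characterization}, $B(a)(\theta)=Z_{B(a)}(\ka(\overline{a}))=Z_G(\ka(\overline{a}))$ is itself this torus, so $B(a)$ is $\theta$-split and $\iota_\ka(a)\in\mathring{\mathcal{F}}l_X$.

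The core identity to verify is
\[
\hat{\ka}_{x_0}(B,a)=(a_B,a)\in\fg_1\times\fa
\]
for any $B\in\mathring{\mathcal{F}}l_X$ and $a\in\fa^{reg}$, where $a_B$ denotes the image of $a$ under the canonical isomorphism $\fa\cong \fa_B:=\Lie(B\cap\theta(B))^{-\theta}\subset\fg_1$ coming from Lemma~\ref{Lem: canonical torus match}. Granting this, the pair $(a_B,a)$ corresponds under $\hat{\fg}_1^{reg}\cong\widetilde{\fg}_1^{reg}$ to $(a_B,B)$: indeed $a_B\in\Lie(B)\cap\fg_1^{reg}$ is regular semisimple with centralizer $Z_G(a_B)=B\cap\theta(B)$, so $B(\theta)=Z_B(a_B)$ is tautologically a regular $\theta$-stable Borel of $Z_G(a_B)$, matching Proposition~\ref{Prop: regular characterization}. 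By the defining property of $\iota_\ka(a)$ we have $a_{\iota_\ka(a)}=\ka(\overline{a})$, and hence $\hat{\ka}_{x_0}(\iota_\ka(a),a)=(\ka(\overline{a}),a)=\ka_{KW}(a)$. Uniqueness of $\iota_\ka$ then follows: if another $\iota$ satisfies the diagram, the same argument identifies $\hat{\ka}_{x_0}(\iota(a),a)$ with $(a_{\iota(a)},\iota(a))\in\widetilde{\fg}_1^{reg}$, and comparing second coordinates with the identified image of $\ka_{KW}(a)$ forces $\iota=\iota_\ka$.

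The main obstacle is the explicit computation $\hat{\ka}_{x_0}(B,a)=(a_B,a)$, which is where the quasi-split structure enters decisively. For $\chi\in\Lam_X\subset\fa^\ast$, the $B$-eigenfunction $f_\chi$ satisfies $\xi\cdot f_\chi=\chi(\xi)f_\chi$ for $\xi\in\fb$, where $\chi$ is extended to $\fb$ via $\fb\twoheadrightarrow\ft\twoheadrightarrow\fa$; differentiating at $x_0$ yields $d_{x_0}\log f_\chi(\xi_{x_0})=\chi(\xi)$, where $\xi_{x_0}\in T_{x_0}X=\fg/\fg_0\cong\fg_1$ is the infinitesimal action. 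Under the $\theta$-invariant inner product identification, this pins down $d_{x_0}\log f_\chi\in\fg_1$ on the image of $\fb$; the vanishing of $\chi$ on $[\fb,\fb]=\fn$ forces $d_{x_0}\log f_\chi$ to annihilate the image of $\fn$ in $\fg_1$. Since $\theta$ swaps $\fn$ with the opposite nilradical (as $B$ is $\theta$-split), this image is precisely the orthogonal complement of $\fa_B$ in $\fg_1$, so $d_{x_0}\log f_\chi\in\fa_B$, and matching values on $\ft$ identifies this element with $a_B$.
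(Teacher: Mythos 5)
Your proposal is correct and takes essentially the same approach as the paper: define $\iota_\ka$ by composing $\ka_{KW}$ with the projection $\fgreg\to\Fl_X$ under the identification $\hat{\fg}_1^{reg}\cong\fgreg$ from Proposition~\ref{Prop: regular characterization}. The paper's proof consists of only these two sentences, leaving implicit the verifications you spell out -- that $\iota_\ka$ lands in $\mathring{\mathcal{F}}l_X$, the explicit computation of Knop's differential $d_{x_0}\log f_\chi$ for a $\theta$-split Borel (your ``core identity''), and uniqueness.
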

\begin{proof}
This follows from Proposition \ref{Prop: regular characterization}, which characterizes $\hat{\fg}_1^{reg}\cong \fgreg\subset \fg_1\times\Fl_X$. Indeed the map $\iota_\ka$ is given by composing $\ka_{KW}$ with the projection $\fgreg\lra \Fl_X$.
\end{proof}

\end{appendices}



\bibliographystyle{alpha}

\bibliography{bibs}
\end{document}